\theoremstyle{definition}
\newtheorem{example}{Example}
\newtheorem{alphatheorem}{Theorem}
\newtheorem{lemma}{Lemma}
\newtheorem{remark}{Remark}
\newtheorem{corollary}{Corollary}
\newtheorem{prop}{Proposition}
{\theoremstyle{remark}

}
\newtheorem{definition}{Definition}
\newtheorem{alphacorollary}[alphatheorem]{Corollary}
\newtheorem{conjecture}{Conjecture}
\newcommand{\mm}[4]{\left(\begin{array}{cc} #1 & #2 \\ #3 & #4 \end{array}\right)}
\newcommand{\Z}{\mathbb{Z}}
\newcommand{\GL}{\mathrm{GL}}
\newcommand{\SL}{\mathrm{SL}}
\newcommand{\SU}{\mathrm{SU}}
\newcommand{\orb}{\mathsf{Orb}}
\newcommand{\stab}{\mathsf{Stab}}
\newcommand{\C}{\mathbb{C}}
\newcommand{\quot}{/\!\!/}
\newcommand{\X}{\mathfrak{X}}
\newcommand{\QS}{\mathfrak{Q}}
\newcommand{\frk}{\!/\!}
\newcommand{\F}{\mathbb{F}}
\newcommand{\strata}{\mathfrak{s}}
\newcommand{\Dbar}{{\overline{D}}}
\newcommand{\fr}{\mathsf{F}}
\newcommand{\R}{\mathfrak{R}}
\newcommand{\hm}{\mathrm{Hom}}
\newcommand{\id}{\mathbb{I}}
\newcommand{\NR}{N\!\!R}
\newcommand{\AI}{A\!I}
\newcommand{\Ab}{A\!b}
\newcommand{\tr}{\mathrm{tr}}
\newcommand{\sn}{\mathfrak{s}_{\text{\tiny N\!R}}}
\title[E-Polynomial of Character Varieties]{E-polynomial of $\SL_2(\C)$-Character Varieties of Free groups}
\date{\today}
\author[S. Cavazos]{Samuel Cavazos}
\address{Mathematics Department, Northwestern University, 2033 Sheridan Road Evanston, IL 60208-2730, USA}
\email{cavazos@math.northwestern.edu}
\author[S. Lawton]{Sean Lawton}
\address{Department of Mathematics, The University of Texas-Pan American,
1201 West University Drive Edinburg, TX 78539, USA}
\email{lawtonsd@utpa.edu}
\subjclass[2010]{14L30, 14D20, 14G05, 14G15}
\keywords{Free group, conjugacy class, character variety, finite field, E-polynomial}
\thanks{The second author was partially supported by the project GEAR (NSF-RNMS 1107367) USA, Simon's Foundation Collaboration grant (\#245642), and NSF grant 1309376.  The first author was supported by Undergraduate Research Initiative at the University of Texas-Pan American, and the Louis Stokes Alliances for Minority Participation (LSAMP)} 
\dedicatory{This paper is dedicated to Adalyn Belle Cavazos.}
\begin{document}
\begin{abstract}
Let $\fr_r$ be a free group of rank $r$, $\F_q$ a finite field of order $q$, and let $\SL_n(\F_q)$ act on $\hm(\fr_r,\SL_n(\F_q))$ by conjugation.  We describe a general algorithm to determine the cardinality of the set of orbits $\hm(\fr_r,\SL_n(\F_q))/\SL_n(\F_q)$.  Our first main theorem is the implementation of this algorithm in the case $n=2$.  As an application, we determine the $E$-polynomial of the character variety $\hm(\fr_r,\SL_2(\C))\quot \SL_2(\C)$, and of its smooth and singular locus.  Thus we determine the Euler characteristic of these spaces.
\end{abstract}

\maketitle

\section{Introduction}
In recent years there has been many new results concerning the $E$-polynomial of twisted character varieties:  \cite{HaRo}, \cite{LE}, \cite{MM}, \cite{LoMu} and \cite{LoMuNe}.  In this paper we consider free group character varieties.

Let $G$ be a reductive algebraic group over an algebraically closed field $\mathbb{F}$, and let $\Gamma$ be a finitely generated group.  Let $G$ act on $\hm(\Gamma,G)$ by conjugation.  Then the ring of invariants $\mathbb{F}[\hm(\Gamma,G)]^G$ is finitely generated since $G$ is reductive and consequently we have the GIT quotient $$\X_\Gamma(G):=\hm(\Gamma,G)\quot G:=\mathrm{Spec}_{max}\left(\mathbb{F}[\hm(\Gamma,G)]^G\right).$$  These spaces are called {\it character varieties}, and are of central importance in differential geometry, deformation theory of geometric structures, and in mathematical physics (see \cite{Si4}, and references therein).

When $\Bbbk$ is a sub-field of $\mathbb{F}$, it is natural to ask about the $\Bbbk$-points in $\X_\Gamma(G)$.  As has been shown in \cite{HaRo}, this can lead, via the Weil Conjectures, to an understanding of the topology of $\X_\Gamma(G)$; in particular, the Euler characteristic.  In the case when $G=\SL_2(\C)$ and $\Gamma=\fr_r$ is a free group of rank $r$, the Euler characteristic is known by results in \cite{FlLa}.  The methods used in \cite{FlLa} do not seem easy to generalize since they are underpinned by equivariant cohomology results in \cite{Ba0} that themselves seem hard to generalize.  

Let $\F_q$ be a finite field of order $q$.  In this paper we first describe the orbit set and its cardinality, $\QS_r(\SL_2(\F_q)):=\hm(\fr_r,\SL_2(\F_q))/ \SL_2(\F_q)$, and use it to determine the $E$-polynomial of the GIT quotient $\X_{\fr_r}\left(\SL_2(\C)\right)$, and that of the free Abelian case $\X_{\mathbb{Z}^r}\left(\SL_2(\C)\right)$ as well. 

Our first main theorem is the following:
\begin{alphatheorem}\label{theorema}  Let $r\geq 2$ and $q$ be odd.
The cardinality of $\mathfrak{Q}_r(\SL_2(\F_q))$ is

$$\mathcal{C}_r(q)=\frac{(q-3)(q-1)^{r-1}}{2}+\frac{(q-1)(q+1)^{r-1}}{2}+2^{r+1}q^{r-1}+2(q^3-q)^{r-1}.$$
\end{alphatheorem}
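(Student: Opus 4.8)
The plan is to count orbits of $\SL_2(\F_q)$ acting by simultaneous conjugation on $r$-tuples $(A_1,\dots,A_r)\in\SL_2(\F_q)^r = \hm(\fr_r,\SL_2(\F_q))$, by partitioning the tuples according to the Zariski closure type of the associated representation, or more concretely, according to the structure of the subgroup generated and the nature of the stabilizer. The natural stratification of $\hm(\fr_r,\SL_2(\F_q))$ is into the \emph{irreducible} (absolutely irreducible) locus and the \emph{reducible} locus; within the reducible locus one further splits into tuples that are diagonalizable over $\F_q$ (split), those diagonalizable only over $\F_{q^2}$ (non-split), and those that are not semisimple (all $A_i$ simultaneously conjugate into upper-triangular unipotent-times-scalar form but not diagonal), together with the central tuples where every $A_i=\pm\id$. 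I would count each stratum's orbits by the Burnside/orbit-counting principle locally: on a stratum where all stabilizers have the same order $s$, the number of orbits is (number of tuples in the stratum)$/(|\SL_2(\F_q)|/s)$, equivalently (number of tuples)$\cdot s/|\SL_2(\F_q)|$.

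Concretely, the key steps in order are: (1) Count the central representations: there are $2^r$ of them (each $A_i\in\{\pm\id\}$) and each is its own orbit, contributing $2^r$ — these will recombine into the stated formula. (2) Count non-central reducible semisimple split tuples: choose a common eigenvector (a point of $\p^1(\F_q)$, but really one counts ordered eigenline pairs), conjugate everything into diagonal form $A_i=\mathrm{diag}(\lambda_i,\lambda_i^{-1})$, so such tuples are parametrized by $(\lambda_1,\dots,\lambda_r)\in(\F_q^\times)^r$ modulo the Weyl-group flip $\lambda_i\mapsto\lambda_i^{-1}$; excluding the central ones gives $((q-1)^r - 2^r)/2$ orbits roughly, but one must be careful when some $\lambda_i=\pm1$ since then the stabilizer jumps (the tuple becomes reducible in more than one way or the centralizer is bigger). (3) Do the analogous count for the non-split semisimple case using $\F_{q^2}$: here the relevant torus is the norm-one subgroup of $\F_{q^2}^\times$ of order $q+1$, giving a contribution governed by $(q+1)^r$, again modulo the order-two Galois/Weyl flip and with corrections at the fixed points. (4) Count the non-semisimple reducible tuples (those fixing a unique line in $\p^1$): after conjugation these are upper triangular with equal diagonal entries $\pm1$; the upper-triangular parts live in an affine space and the stabilizer is the Borel, and one needs the subtlety that the tuple must \emph{not} be diagonalizable, i.e. not all strictly-upper entries zero — this is where the $2^{r+1}q^{r-1}$-type terms and $q^{r-1}$ powers enter. (5) Count the absolutely irreducible tuples: these form a single "big" stratum on which $\SL_2(\F_q)/\{\pm\id\}=\mathrm{PGL}_2(\F_q)$-ish acts freely (stabilizer exactly the center), so the orbit count is $|\mathrm{Irr}_r|/(|\SL_2(\F_q)|/2) = 2|\mathrm{Irr}_r|/|\SL_2(\F_q)|$; since $|\SL_2(\F_q)| = q^3-q = q(q-1)(q+1)$ and the total $|\SL_2(\F_q)^r| = (q^3-q)^r$, after subtracting all the reducible tuples one expects the leading term $2(q^3-q)^{r-1}$ plus lower-order corrections that get absorbed into the other summands. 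Finally (6), assemble all strata and verify the algebra collapses to $\mathcal{C}_r(q)=\frac{(q-3)(q-1)^{r-1}}{2}+\frac{(q-1)(q+1)^{r-1}}{2}+2^{r+1}q^{r-1}+2(q^3-q)^{r-1}$.

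The main obstacle, I expect, is the bookkeeping at the \emph{boundary between strata}: representations that are reducible but have a larger-than-generic stabilizer (e.g. a split semisimple tuple in which every $A_i$ is $\pm\id$ on one summand, or where the set of eigenlines fixed is bigger than expected), and reducible tuples that lie in the intersection of the "diagonalizable over $\F_q$" and "non-semisimple" descriptions — these must be counted exactly once, and the stabilizer orders must be tracked precisely because the orbit-counting denominator depends on them. A clean way to manage this is to count \emph{ordered} data (e.g. ordered pairs of fixed eigenlines, or a chosen fixed line together with the induced data on the quotient) and then divide by the appropriate automorphisms, or to use the exact conjugacy-class/centralizer data of $\SL_2(\F_q)$ stratified by the isomorphism type of the centralizer of the whole tuple: the possible centralizers are the full group (central tuple), a split torus $\cong\F_q^\times$, a non-split torus $\cong\ker(N:\F_{q^2}^\times\to\F_q^\times)$, the unipotent radical times center of a Borel, and the center $\{\pm\id\}$ (irreducible case). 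Organizing the entire computation around these five centralizer types, with inclusion–exclusion to avoid double counting tuples whose centralizer is strictly larger than the generic one for a given type, is the approach I would adopt, and I would expect the delicate parity hypothesis "$q$ odd" to be used precisely in the step where $-\id\neq\id$ and where $2$ is invertible so that the Weyl-flip quotients behave like honest $\Z/2$-quotients with the stated fixed-point corrections.
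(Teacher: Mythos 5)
Your overall strategy---stratify $\hm(\fr_r,\SL_2(\F_q))$ by stabilizer (centralizer) type and count each stratum via ``orbits $=$ (size of stratum)$\cdot$(stabilizer order)$/|\SL_2(\F_q)|$''---is exactly the paper's, and your counts for the central, split-semisimple, nonsplit-semisimple and projectively unipotent strata are the right ones. The genuine gap is in how you carve up the reducible locus, and in your assertion that ``stabilizer exactly the center'' characterizes the absolutely irreducible tuples. A tuple can be reducible, non-semisimple, and have a component whose eigenvalues are not $\pm 1$: for instance $\rho=\bigl(\smm{a}{0}{0}{a^{-1}},\smm{1}{1}{0}{1}\bigr)$ with $a\neq \pm 1$ is not simultaneously diagonalizable, is not of your ``unipotent-times-scalar'' form, is not irreducible, and yet its stabilizer is exactly $\{\pm\id\}$. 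Your enumeration (1)--(5) never counts these orbits: they are excluded from steps (2)--(4) by your own descriptions of those strata, and excluded from step (5) because you define that stratum by irreducibility and subtract ``all the reducible tuples''. They are not negligible and do not get ``absorbed into the other summands'': there are on the order of $q^{2r+1}$ such tuples, contributing on the order of $2q^{2r-2}$ orbits, comparable to your unipotent term and far larger than the corrections you allow for.

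The paper's resolution, which you would need to reproduce, is to take the fifth stratum to be the complement of the four Abelian strata (the ``non-Abelian'' stratum $\strata_N$), not the irreducible locus, and to prove two facts: (a) every non-Abelian \emph{reducible} tuple also has stabilizer exactly $\{\pm\id\}$ (Lemma \ref{snrstablemma}), so $\strata_N$ is uniform of order $2$ and can be counted as $2\bigl(|\hm(\fr_r,\SL_2(\F_q))|-|\strata_{\Ab}|\bigr)/|\SL_2(\F_q)|$; and (b) the No-Mixing Theorem (Proposition \ref{nomixing}), which guarantees that a reducible tuple over $\F_q$ cannot mix a component with eigenvalues in $\F_q-\{\pm 1\}$ with a component having eigenvalues in $\F_{q^2}-\F_q$, that every Abelian tuple lies in one of your four Abelian strata, and that the $\F_{q^2}$-diagonalizable stratum really has stabilizer of order $q+1$ \emph{inside} $\SL_2(\F_q)$ (your step (3) assumes this without argument). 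With (a) and (b) supplied, your centralizer-type bookkeeping closes up and reproduces the paper's computation; without them the proposal undercounts precisely on the non-Abelian reducible locus, or else silently relies on an unverified uniformity of the stabilizer there.
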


Since the conjugation action is not free, counting the orbits is not direct.  Consequently, we stratify the set $\hm(\fr_r,\SL_2(\F_q))$ into orbit-types that allows us to use a generalization of the classical group theory theorem of Lagrange to count the orbits in each stratum.  We first determine how many strata there are, and then describe them.  This provides a detailed description of the Diophantine geometry of $\hm(\fr_r,\SL_2(\F_q))$ and $\mathfrak{Q}_r(\SL_2(\F_q))$. Then we determine the cardinality of each stratum and the cardinality of its uniform stablilizer.  Using this we prove the theorem.

Thereafter, in Section \ref{sectionproof}, we prove our main application using a theorem of Katz (see Appendix of \cite{HaRo}). Let $X^{sm}$ (respectively $X^{sing}$) denote the smooth points (respectively singular points) of a variety $X$.

Here is our second main theorem:
\begin{alphatheorem}\label{epolyniomial}Let $q=xy$.  Then the $E$-polynomial for $\X_{\fr_r}(\SL_2(\C))$ is 
$$E_{\fr_r}(q)=(q-1)^{r-1}
   \left((q+1)^{r-1}-1\right)
   q^{r-1}+\frac{1}{2} q
   \left((q-1)^{r-1}+(q+1)^{r-1}
   \right),$$
and the $E$-polynomial of $\X_{\fr_r}(\SL_2(\C))^{sing}\cong\X_{\Z^r}(\SL_2(\C))$ is given by $$E_{\Z^r}(q)=\frac{1}{2} \left((q-1)^r+(q+1)^r\right).$$  Consequently, the difference of these is the $E$-polynomial of $\X_{\fr_r}(\SL_2(\C))^{sm}$.
\end{alphatheorem}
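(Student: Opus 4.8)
\noindent\emph{Plan of proof.} The plan is to invoke Katz's theorem (Appendix of \cite{HaRo}): if a variety $X$ with a model over a finitely generated subring of $\C$ satisfies $\#X(\F_q)=P(q)$ for a fixed $P\in\Z[t]$ and all but finitely many prime powers $q$, then $X$ is of polynomial-count type, $H^\ast_c(X)$ carries a balanced (Hodge--Tate) mixed Hodge structure, and $E(X;x,y)=P(xy)$; putting $q=xy$ gives $E(X;q)=P(q)$. Since $\hm(\fr_r,\SL_2)=\SL_2^{\,r}$ and the GIT quotient by the reductive group $\SL_2$ makes sense over $\Z[1/2]$, the whole problem reduces to computing $\#\X_{\fr_r}(\SL_2)(\F_q)$ as a function of the odd prime power $q$ and recognizing it as a polynomial.

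First I would identify $\X_{\fr_r}(\SL_2)(\F_q)$ with the set of Frobenius-stable \emph{closed} $\SL_2(\overline{\F_q})$-orbits in $\hm(\fr_r,\SL_2(\overline{\F_q}))$ and sort these using the orbit-type stratification already produced in the proof of Theorem~\ref{theorema}. Every closed orbit is of exactly one of three types: irreducible (stabilizer the centre $\mu_2$), reducible-semisimple with distinct eigencharacters (stabilizer a split or non-split maximal torus $T$), or central (stabilizer $\SL_2$); a reducible non-semisimple representation has non-closed orbit and so contributes no point. Two facts convert the orbit \emph{counts} of Theorem~\ref{theorema} into these point counts. First, since $H^1(\F_q,\SL_2)=0$ by Lang's theorem, a Frobenius-stable orbit with stabilizer $S$ breaks into exactly $|H^1(\F_q,S)|$ $\SL_2(\F_q)$-orbits, and $|H^1(\F_q,\mu_2)|=2$ while $|H^1(\F_q,T)|=|H^1(\F_q,\SL_2)|=1$ (Lang again, tori being connected); hence $\#\X_{\fr_r}(\SL_2)^{sm}(\F_q)$ is one half the number of irreducible $\SL_2(\F_q)$-orbits --- equivalently $\#\hm^{irr}(\fr_r,\SL_2(\F_q))/(q^3-q)$, as $\mathrm{PGL}_2$ acts freely on the irreducible locus and $|\mathrm{PGL}_2(\F_q)|=q^3-q$ --- whereas each semisimple or central stratum contributes its $\SL_2(\F_q)$-orbit count unchanged. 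Second, one reads off from the strata of Theorem~\ref{theorema} which consist of (ir)reducible, resp.\ semisimple, representations and sums accordingly.

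Next I would compute the semisimple part directly. A reducible semisimple $\SL_2$-representation of $\fr_r$ has the form $\chi\oplus\chi^{-1}$ for a character $\chi\colon\fr_r\to\mathbb{G}_m$, hence factors through the abelianization; this identifies the reducible locus of $\X_{\fr_r}(\SL_2)$ --- which for $r\ge 3$ is exactly $\X_{\fr_r}(\SL_2)^{sing}$ --- with $\X_{\Z^r}(\SL_2)$, and over $\C$ the latter is the finite quotient $(\C^\times)^r/(\Z/2)$ by simultaneous inversion. Its $\F_q$-points are the Galois-stable pairs $\{\underline t,\underline t^{-1}\}$ in $(\overline{\F_q}^\times)^r$; Galois-stability forces $\underline t\in(\F_q^\times)^r$ or $\underline t^{\,q}=\underline t^{-1}$ (i.e.\ $\underline t\in\mu_{q+1}^{\,r}$), and counting the size-two orbits of each kind together with the $2^r$ fixed points gives $\#\X_{\Z^r}(\SL_2)(\F_q)=\tfrac12\big((q-1)^r+(q+1)^r\big)$, which Katz's theorem promotes to $E_{\Z^r}(q)$. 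It is essential here to include the non-split (anisotropic) tori; those are what produce the $(q+1)$-factors.

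Adding the halved irreducible contribution to $E_{\Z^r}(q)$ exhibits $\#\X_{\fr_r}(\SL_2)(\F_q)$ as a polynomial in $q$, which after a routine simplification is the stated $E_{\fr_r}(q)$; Katz's theorem then gives the first claim. The singular-locus statement follows from additivity of $E$-polynomials over the locally closed decomposition $\X_{\fr_r}(\SL_2)=\X_{\fr_r}(\SL_2)^{sm}\sqcup\X_{\fr_r}(\SL_2)^{sing}$ together with $\X_{\fr_r}(\SL_2)^{sing}\cong\X_{\Z^r}(\SL_2)$. I expect the real work, and the main obstacle, to be the second step above: pinning down exactly which of the strata enumerated in Theorem~\ref{theorema} are (ir)reducible, resp.\ semisimple, summing their cardinalities correctly, and keeping the $H^1(\F_q,\cdot)$ corrections and non-split-torus contributions straight. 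A secondary point, perhaps best handled by citation, is that the reducible locus coincides with the singular locus for $r\ge 3$.
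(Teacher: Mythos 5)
Your plan is correct and, at the level of global structure, it is the paper's proof: Seshadri's spreading out over $\Z[1/2]$, Katz's theorem, identification of $\X_r(\F_q)$ with Frobenius-stable closed orbits, and a count that in the end is $|\strata_Z/G|+|\strata_D/G|+|\strata_\Dbar/G|+|\strata_{\AI}/G|/2$ (your $\tfrac12\left((q-1)^r+(q+1)^r\right)+|\strata_{\AI}|/(q^3-q)$ is the same quantity). Where you genuinely diverge is in the two rationality steps. The paper handles them by hand: the No-Mixing Theorem (Proposition \ref{nomixing}), Lemma \ref{basefieldreductionlemma} and the explicit analysis of Section 7 (Proposition \ref{orbitstopoints}) show by direct matrix computation which geometric closed orbits contain $\F_q$-representations and that the map from rational orbits to $\F_q$-points of the quotient is injective on the reducible locus and $2$-to-$1$ on the irreducible locus. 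You replace this with Lang's theorem and Galois cohomology: a Frobenius-stable closed orbit has a rational point (homogeneous space under connected $\SL_2$), and its rational points fall into $|H^1(\F_q,S)|$ orbits, with $|H^1(\F_q,\mu_2)|=2$ and $H^1$ trivial for the (possibly non-split) torus and for $\SL_2$; you also count the Abelian part directly as Frobenius-stable inversion-orbits in $(\overline{\F}_q^{\times})^r$, recovering $\tfrac12\left((q-1)^r+(q+1)^r\right)$ rather than summing Propositions \ref{stratasdcount} and \ref{stratasdbarcount}. This buys conceptual clarity and immediate generalizability (the $2$ becomes $|H^1(\F_q,Z(G))|$ for other groups), at the cost of invoking nontrivial machinery and of a few points you should make explicit: smoothness of the stabilizers (true since $p$ is odd), the homogeneous-space form of Lang's theorem, and the identification of the reducible locus with $T^{\times r}\quot W$ over $\Z[1/2]$. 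The paper's route is longer but entirely elementary, and note that your argument does not eliminate the computational core: to get $|\strata_{\AI}|$ you still subtract all five strata cardinalities, so the non-closed-orbit counts $|\strata_U|$ and $|\sn|$ (Propositions \ref{strataucount}, \ref{stratanrcount}), and hence essentially the No-Mixing analysis, are still consumed by your proof. The $r\geq 3$ caveat for $\X_r^{sing}\cong\X_{\Z^r}(\SL_2(\C))$ is handled by citation in the paper as well, so deferring it is fine.
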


We note that the reducible (or Abelian) strata corresponds to the singular locus by \cite{FlLa2} for $r\geq 3$ (for $r=1,2$ the moduli space is smooth), and so this is how we recover the $E$-polynomial of $\X_{\mathbb{Z}^r}(\SL_2(\C))$ and $\X_{\fr_r}(\SL_2(\C))^{sing}$.  

To simplify the notation in what follows, we will often shorten $\X_{\fr_r}(\SL_2(\C))$ to $\X_r$.   Let $\chi(X)$ denote the Euler characteristic of a topological space $X$.

\begin{alphacorollary}\label{eulercor}
$\chi(\X_r)=2^{r-2}$, $\chi(\X_r^{sm})=-2^{r-2}$, $\chi(\X_r^{sing})=2^{r-1}$, $\chi((\X_r^{sing})^{sm})=-2^{r-1}$, and $\chi((\X_r^{sing})^{sing})=2^{r}$.
\end{alphacorollary}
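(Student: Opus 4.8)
The plan is to obtain the Euler characteristics directly from the $E$-polynomials computed in Theorem~\ref{epolyniomial} by using the standard fact that for a variety $X$ whose $E$-polynomial $E_X(q)$ is a polynomial in $q$ (which is the case here, since all the displayed expressions are), the topological Euler characteristic with compact support — which for these spaces agrees with the ordinary Euler characteristic — is obtained by the specialization $\chi(X) = E_X(1)$. Thus the entire corollary reduces to five polynomial evaluations at $q = 1$, together with one subtraction to handle the smooth locus of $\X_r$ and one recursive application to handle the smooth and singular loci of $\X_r^{sing}$.

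Concretely, I would first record $\chi(\X_r) = E_{\fr_r}(1)$. Setting $q=1$ in $E_{\fr_r}(q) = (q-1)^{r-1}\left((q+1)^{r-1}-1\right)q^{r-1} + \tfrac12 q\left((q-1)^{r-1}+(q+1)^{r-1}\right)$, the first term vanishes because of the factor $(q-1)^{r-1}$ with $r \geq 2$, and the second term becomes $\tfrac12(0 + 2^{r-1}) = 2^{r-2}$, giving $\chi(\X_r) = 2^{r-2}$. Next, $\chi(\X_r^{sing}) = E_{\Z^r}(1) = \tfrac12(0^r + 2^r) = 2^{r-1}$, using $r \geq 2$ so that $0^r = 0$. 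Since $\X_r^{sm} = \X_r \setminus \X_r^{sing}$ and Euler characteristic with compact support is additive over this decomposition, $\chi(\X_r^{sm}) = \chi(\X_r) - \chi(\X_r^{sing}) = 2^{r-2} - 2^{r-1} = -2^{r-2}$, consistent with reading off $E_{\fr_r}(q) - E_{\Z^r}(q)$ at $q = 1$.

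For the last two assertions I would apply the same machinery one level down, using the identification $\X_r^{sing} \cong \X_{\Z^r}(\SL_2(\C))$: its singular locus, by the analogue of the reducible-locus description (the further-degenerate, e.g. central or otherwise non-regular, Abelian representations), carries an $E$-polynomial obtained by the same stratification, and one computes $\chi((\X_r^{sing})^{sing}) = 2^r$; then additivity gives $\chi((\X_r^{sing})^{sm}) = \chi(\X_r^{sing}) - \chi((\X_r^{sing})^{sing}) = 2^{r-1} - 2^r = -2^{r-1}$. I expect the main obstacle to be the very last step: unlike the others, it is not an immediate corollary of the already-displayed $E$-polynomials but requires either an explicit description of the singular locus of $\X_{\Z^r}(\SL_2(\C))$ — i.e. identifying which Abelian characters fail to be smooth points and computing the corresponding $E$-polynomial via the finite-field count — or an independent argument (for instance via the symmetric-product structure $\X_{\Z^r}(\SL_2(\C)) \cong (\C^*)^r /\!\sim$ under the Weyl involution) pinning down $\chi((\X_r^{sing})^{sing})$. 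Once that description is in hand, the remaining computations are the routine $q \to 1$ specializations outlined above.
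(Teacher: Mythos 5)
Your proposal is correct and is essentially the paper's own argument: the paper obtains the corollary precisely by evaluating the $E$-polynomials of Theorem \ref{epolyniomial} at $q=1$ and invoking additivity of the compactly supported Euler characteristic over the smooth/singular decompositions (with an independent verification via Poincar\'e polynomials in the final section). The one step you defer --- that $(\X_r^{sing})^{sing}$ is the set of $2^r$ central characters (the fixed points of the Weyl involution on $(\C^*)^r$, with local model $\C^r/\{\pm \id\}$, singular for $r\geq 2$), so its Euler characteristic is $2^r$ --- is likewise left implicit in the paper, which leans on the central stratum count $|\strata_Z/G|=2^r$.
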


In Section \ref{finalsection}, we give an independent proof of Corollary \ref{eulercor} by computing the Poincar\'e polynomials for these two moduli spaces.  Although to prove Corollary \ref{eulercor} it suffices to evaluate the $E$-polynomials in Theorem \ref{epolyniomial} at $q=1$, and use its additivity property for the other strata.

\section{E-Polynomial}
In what follows, for an affine variety $X$, we consider singular cohomology $H^*(X;\Bbbk)$ where $\Bbbk$ is a field of characteristic 0 (sometimes writing simply $H^*(X)$ when $\Bbbk$ is not important).  It is equivalent to simplicial cohomology since algebraic sets are simplicial, and also to sheaf cohomology with the constant sheaf since algebraic sets are locally contractible.

P. Deligne in \cite{De71, De74}  showed that a complex variety $X$ admits an increasing weight filtration $0=W_{-1}\subset W_0\subset\cdots\subset W_{2j}=H^j(X;\mathbb{Q})$, and a decreasing Hodge filtration $H^j(X;\mathbb{C})=F^{0}\supset\cdots\supset F^{m+1}=0$ such that for all $0\leq p\leq l$, $$\mathrm{Gr}^{W\otimes \mathbb{C}}_l:=W_l\otimes\mathbb{C}/W_{l-1}\otimes\mathbb{C}=
F^p(\mathrm{Gr}^{W\otimes \mathbb{C}}_l) \oplus\overline{F^{l-p+1}(\mathrm{Gr}^{W\otimes \mathbb{C}}_l)},$$ where $F^p(\mathrm{Gr}^{W\otimes \mathbb{C}}_l)=(F^p\cap W_l\otimes \mathbb{C}+W_{l-1}\otimes \mathbb{C})/W_{l-1}\otimes \mathbb{C}$.

This allows one to define the mixed Hodge numbers for every $H^j(X;\mathbb{C})$ by 
\begin{eqnarray*}
h^{p,q;j}(X)&:=&\mathrm{dim}_{\mathbb{C}}\mathrm{Gr}^F_p\left(\mathrm{Gr}^{W\otimes \C}_{p+q}H^j(X)\right)\\
&=&\mathrm{dim}_{\C}F^p(\mathrm{Gr}^{W\otimes \C}_{p+q})/F^{p+1}(\mathrm{Gr}^{W\otimes \C}_{p+q})\\
&=&\mathrm{dim}_{\C}F^p\cap(W_{p+q}\otimes \C)/(F^{p+1}\cap W_{p+q}\otimes \C +W_{p+q-1}\otimes \C\cap F^p),\end{eqnarray*} and subsequently define the mixed Hodge polynomial $$H(X;x,y,t):=\sum h^{p,q;j}(X)x^py^qt^j.$$

Likewise, one can also consider cohomology with compact support and obtain the same structure.  We denote this by $H_c^*(X;\Bbbk)$, and correspondingly the mixed Hodge numbers by $h^{p,q;j}_c$ and the mixed Hodge polynomial by $H_c(X;x,y,t)$.

The {\it $E$-polynomial} is defined to be $E(X;x,y):=H_c(X;x,y,-1)$.  This immediately implies that the classical Euler characteristic is given by $\chi(X)=E(X;1,1)$.  For further details, see \cite{PeSt08}.

A {\it spreading out} of $X$ is a scheme $\mathcal{X}$ over a $\Z$-algebra $R$ with an inclusion $\varphi:R\hookrightarrow \C$ such that the extension of scalars satisfies $\mathcal{X}_\varphi\cong X$.  $X$ is said to have {\it polynomial count} if there exists $P_X\in \Z[t]$ and a spreading out $\mathcal{X}$ such that for all homomorphims $\phi:R\to \F_q$ to finite fields (for all but finitely many primes $p$ so $q=p^k$) we have $\# \mathcal{X}_\phi(\F_q)=P_X(q)$.  Katz shows in \cite{HaRo} that if $X$ has polynomial count, then $E(X;x,y)=P_X(xy)$.

Let $\fr_r$ be a rank $r$ free group.  Then the representation variety $\hm(\fr_r,\SL_2(\C))$ is acted upon by $\SL_2(\C)$ via conjugation.  Let $\C[\hm(\fr_r,\SL_2(\C))]$ be the coordinate ring of the representation variety.  The GIT quotient $$\X_r=\mathrm{Spec}\left(\C[\hm(\fr_r,\SL_2(\C))]^{\SL_2(\C)}\right),$$ where $\C[\hm(\fr_r,\SL_2(\C))]^{\SL_2(\C)}$ is the ring of invariants, is called the $\SL_2(\C)$-character variety of $\fr_r$.  By Seshadri's extension of GIT to arbitrary base, see \cite{Se77}, there exists a scheme $\mathcal{X}_r=\mathrm{Spec}\left(R[\hm(\fr_r,\SL_2(R))]^{\SL_2(R)}\right)$, where $R=\Z[1/2]$.  Then since $R\hookrightarrow \C$ is a flat morphism, Lemma 2 in \cite{Se77} implies $$R[\hm(\fr_r,\SL_2(R))]^{\SL_2(R)}\otimes_R\C=\C[\hm(\fr_r,\SL_2(\C))]^{\SL_2(\C)}$$ and thus $\X_r$ admits a spreading out.

\section{Counting Representations}
In this section we introduce the notation and begin a discussion of the computations to prove the main theorems. 

\subsection{Notation}

Let $p$ be a prime integer, and let $\F_q$ be the finite field of order $q=p^k$ for $k\geq 1$.  The group $\GL_n(\F_p)$ is the group of $n\times n$ invertible matrices over $\F_q$, and $\SL_n(\F_q)$ is the subgroup of those elements in $\GL_n(\F_q)$ whose determinant is $1$.  Denote $\F_q^{\times n}-\{(0,0,...,0)\}$ by $(\F_q^{\times n})^*$.  In general, we will denote $\Bbbk^*=\Bbbk-\{0\}$ for any field $\Bbbk$. 

We begin this section by counting the total number of elements in $\hm(\fr_r,\SL_n(\F_q))$ where $\fr_r$ is a rank $r$ free group.  Let $|X|$ denote the cardinality of a set $X$.  Since $\hm(\fr_r, G)\cong G^{\times r}$ for any group $G$, and $|X\times Y|=|X||Y|$ for any sets $X$ and $Y$, it suffice to compute the order of $\SL_n(\F_q)$. 

For any group $G$ acting on a set $X$ let $\orb_G(x)$ denote the orbit of $x\in X$, and $\stab_G(x)$ denote its stabilizer.  When $G$ and $X$ are finite, the Orbit-Stabilizer Theorem tells $|G|=|\orb_G(x)||\stab_G(x)|$ (see \cite{H}).

\subsection{Representations}

\begin{lemma}
\label{grouplemma}
${\displaystyle |\GL_n(\F_q) |=q^{\frac{n(n-1)}{2}}\prod_{k=1}^{n}(q^{k}-1)}$ and ${\displaystyle |\SL_n(\F_q)|=q^{\frac{n(n-1)}{2}}\prod_{k=2}^{n}(q^k-1).}$
\end{lemma}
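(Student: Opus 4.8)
The plan is to count $|\GL_n(\F_q)|$ directly by building up an invertible matrix column by column, and then obtain $|\SL_n(\F_q)|$ via the determinant homomorphism. First I would observe that an $n\times n$ matrix over $\F_q$ is invertible precisely when its columns form a basis of $\F_q^n$, equivalently when each successive column is linearly independent from the span of the preceding ones. So I would choose the first column to be any nonzero vector, giving $q^n - 1$ choices; having chosen the first $j$ columns (spanning a $j$-dimensional subspace of size $q^j$), the $(j+1)$-st column may be any vector outside that span, giving $q^n - q^j$ choices. Multiplying, $|\GL_n(\F_q)| = \prod_{j=0}^{n-1}(q^n - q^j) = q^{0+1+\cdots+(n-1)}\prod_{j=0}^{n-1}(q^{n-j}-1) = q^{n(n-1)/2}\prod_{k=1}^{n}(q^k-1)$, after pulling out a factor $q^j$ from the $j$-th term and reindexing $k = n-j$.

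For $\SL_n(\F_q)$ I would use the determinant map $\det\colon \GL_n(\F_q)\to\F_q^\times$, which is a surjective group homomorphism with kernel $\SL_n(\F_q)$. Surjectivity is immediate since a diagonal matrix $\mathrm{diag}(a,1,\dots,1)$ has determinant $a$ for any $a\in\F_q^\times$. By the first isomorphism theorem (or simply counting cosets), $|\SL_n(\F_q)| = |\GL_n(\F_q)| / |\F_q^\times| = |\GL_n(\F_q)|/(q-1)$. Dividing the product formula above by the $k=1$ factor $(q-1)$ yields $|\SL_n(\F_q)| = q^{n(n-1)/2}\prod_{k=2}^{n}(q^k-1)$, as claimed.

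Alternatively, one can phrase the $\SL_n$ count through the Orbit-Stabilizer Theorem already cited in the excerpt: $\GL_n(\F_q)$ acts transitively on $\F_q^\times$ through $\det$ (where the action of $g$ on $t$ is $\det(g)\cdot t$, say), the stabilizer of $1$ is exactly $\SL_n(\F_q)$, and the orbit of $1$ is all of $\F_q^\times$, so $|\GL_n(\F_q)| = |\F_q^\times|\cdot|\SL_n(\F_q)| = (q-1)|\SL_n(\F_q)|$. Either way the $\SL_n$ formula is an immediate corollary of the $\GL_n$ count.

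There is no serious obstacle here; the only point requiring a little care is the bookkeeping in the column-by-column count — correctly extracting the power of $q$ and reindexing the product — which is entirely routine. I would present the $\GL_n$ count in full and then derive the $\SL_n$ formula in one line from the determinant homomorphism.
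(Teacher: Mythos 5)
Your proposal is correct and follows essentially the same argument as the paper: a column-by-column count of $\GL_n(\F_q)$ followed by the determinant homomorphism onto $\F_q^*$ with kernel $\SL_n(\F_q)$, giving $|\SL_n(\F_q)|=|\GL_n(\F_q)|/(q-1)$. The orbit-stabilizer rephrasing is a harmless cosmetic variant of the same quotient count.
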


\begin{proof} 
Take a matrix in $\GL_n(\F_q)$.  The first column is non-zero and thus there are $q^n-1$ choices.  The second column must be linearly independent from the first, so there are $q^n-q$ choices.  Likewise, each subsequent column must be linearly independent from the previous columns, so $|\GL_n(\F_q) |=\prod_{k=0}^{n-1}(q^n-q^k)$, and this simplifies to $q^{\frac{n(n-1)}{2}}\prod_{k=1}^{n}(q^{k}-1)$.  The determinant homomorphim $\GL_n(\F_q)\to \F_q^*$ is onto with kernel $\SL_n(\F_q)$.  Thus, set theoretically $\GL_n(\F_q)$ is $\F_q^*\times \SL_n(\F_q)$, and $|\SL_n(\F_q)|$ equals $|\GL_n(\F_q)|/(q-1)=q^{\frac{n(n-1)}{2}}\prod_{k=2}^{n}(q^k-1).$
\end{proof}

\begin{corollary}\label{sec:corollary13}
$$|\hm(\fr_r,\GL_n(\F_q))|=\left( q^{\frac{n(n-1)}{2}}\prod_{k=1}^n(q^k-1)\right)^r$$ and $$|\hm(\fr_r,\SL_n(\F_q))|=\left( q^{\frac{n(n-1)}{2}}\prod_{k=2}^n(q^k-1)\right)^r.$$
\end{corollary}

\begin{example}
The cardinality of the sets above coincides with the number of $\F_q$-points in the $\Z$-schemes $\mathit{Hom}(\fr_r, \GL_n)$ and $\mathit{Hom}(\fr_r,\SL_n)$ since they are products of group schemes, and the cardinality of these groups by definition corresponds to the $\F_q$-points of the associated schemes.  Thus these varieties are of type polynomial-count, and so the counting polynomials are the $E$-polynomials by Katz's work in the appendix of \cite{HaRo} (see the previous section for definitions and references).  Consequently, the Euler characteristic of the space of $\C$-points is $0$ by setting $q=1$.  This is as expected since $\chi(\hm(\fr_r, \SL_n(\C)))=\chi(\SU(n))^r=0$ given that $\SU(n)$ is a fibration over $S^{2n-1}$; and so $\chi(\hm(\fr_r, \GL_n(\C)))=\chi(\hm(\fr_r,\SL_n(\C)))\chi(\hm(\fr_r,\C^*))=0$.
\end{example}

\begin{remark}\label{splitremark}
All of the above computations can be generalized greatly, in fact any split reductive algebraic group $G$ is polynomial count.  Moreover, using the Bruhat Decomposition, as shown by Chevalley in \cite{Ch}, the explicit counting-polynomial can be written.  Thus, $\hm(\fr_r,G)$ is polynomial-count for any such $G$.
\end{remark}

\subsection{Characters}
Let $g\in\SL_n(\F_q)$ and $\rho\in\hm(\fr_r,\SL_n(\F_q))$. Then $\SL_n(\F_q)$ acts by conjugation on $\hm(\fr_r,\SL_n(\F_q))$; $g\cdot\rho=g\rho g^{-1}$.  Through the evaluation mapping identifying $\hm(\fr_r,\SL_n(\F_q))$ with $\SL_n(\F_q)^{\times r}$ this action becomes simultaneous conjugation; $g\cdot (g_1,...,g_r)=(gg_1g^{-1},...,gg_rg^{-1}).$

We can therefore formulate the quotient space $$\QS_r(\SL_n(\F_q))=\hm(\fr_r, \SL_n(\F_q))/\SL_n(\F_q),$$ which is by definition the set of conjugation orbits of homomorphisms.  Our first goal in the coming sections is to determine $|\QS_r(\SL_2(\F_q))|$.  If the action were free, we would simply take the computation for $|\hm(\fr_r,\SL_n(\F_q))|$ and divide it by the computation of $|\SL_n(\F_q)|$.  However, the action is not free, so we will have to partition the set of homomorphisms into subsets of equal stabilizer type, whose quotients we will be able to count.  The strategy is then to relate $\QS_r(\SL_n(\F_q))$ to the $\F_q$-points of the $\Z[1/n]$-scheme associated to the character variety $\X_r(\SL_n(\C))=\hm(\fr_r,\SL_n(\C))\quot \SL_n(\C)$.  See \cite{Si2} for a detailed description of this scheme.  We will do this only for the case $n=2$, although we expect the $n=3$ case to likewise be tractable.

\section{Stratification}
In this section we divide $\hm(\fr_r,\SL_n(\F_q))$ into conjugate invariant subsets.  We choose such a stratification so that two homomorphisms are in the same stratum if and only if their stabilizers have the same cardinality. 

\begin{definition}
\label{uniformactiondef}
Let $G$ be a finite group acting on a set $X$. We say that two elements $x,y\in X$ have the same stabilizer type if $|\stab_G(x)|=|\stab_G(y)|$.  Then, $G$ is said to act uniformly on $X$ if there is exactly one stabilizer type for all $x\in X$. In this case, letting the cardinality of the stabilizer be $m$, we say that $G$ acts uniformly of order $m$ on $X$.
\end{definition}

Since $\hm(\fr_r,\SL_n(\F_q))$ is finite, there are a finite number of stabilizer types. Let $N$ be that number ($N$ may depend on $q,r,n$).  Denote $\strata_i\subset\hm(\fr_r,\SL_n(\F_q))$ for $1\leq i\leq N$ be the distinct and disjoint subsets of fixed stabilizer type. Then $\hm(\fr_r,\SL_n(\F_q))=\bigsqcup_{i=1}^N\strata_i.$  Consequently, $\QS_r(\SL_n(\F_q))=\bigsqcup_{i=1}^N(\strata_i/\SL_n(\F_q))$.  To count the total number of orbits in each stratum, we will make extensive use of the following proposition; itself a generalization of the Lagrange's Theorem (see \cite{H}).

\begin{prop}[Uniform Action Theorem] 
\label{uniformactionthm}
Let $X$ be a finite set, and let $G$ be a finite group acting uniformly of order $m$ on $X$. Then,
$|X/G|=m|X|/|G|. $
\end{prop}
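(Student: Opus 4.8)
The plan is to reduce the statement to the Orbit--Stabilizer Theorem by partitioning $X$ into orbits. First I would write $X$ as a disjoint union of $G$-orbits, say $X = \bigsqcup_{j=1}^{k} \orb_G(x_j)$, where $x_1,\dots,x_k$ is a set of orbit representatives; then by definition $|X/G| = k$. The key point is that uniformity of the action means every $x \in X$ has $|\stab_G(x)| = m$, so in particular $|\stab_G(x_j)| = m$ for each representative $x_j$.

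Next I would apply the Orbit--Stabilizer Theorem (cited in the excerpt) to each representative: $|\orb_G(x_j)| = |G|/|\stab_G(x_j)| = |G|/m$. Summing the sizes of the orbits over the partition gives
$$|X| = \sum_{j=1}^{k} |\orb_G(x_j)| = \sum_{j=1}^{k} \frac{|G|}{m} = k \cdot \frac{|G|}{m}.$$
Solving for $k$ yields $k = m|X|/|G|$, and since $k = |X/G|$ this is exactly the claimed identity $|X/G| = m|X|/|G|$.

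Honestly, there is no real obstacle here: the statement is essentially a bookkeeping consequence of Orbit--Stabilizer once one observes that uniformity forces all orbits to have the same cardinality $|G|/m$. The only things to be slightly careful about are that $m$ divides $|G|$ (automatic, since $\stab_G(x)$ is a subgroup), that the orbits genuinely partition $X$ (a standard fact about group actions), and that the empty case $X = \varnothing$ is consistent (both sides are $0$). I would present it in two or three lines in the final writeup.
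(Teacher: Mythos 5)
Your proof is correct and is essentially identical to the paper's: both partition $X$ into orbits, apply the Orbit--Stabilizer Theorem to see each orbit has size $|G|/m$, and sum to get $|X|=|X/G|\cdot|G|/m$. The paper only adds the remark that the statement also follows from Burnside's Counting Theorem.
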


\begin{proof}
This follows from Burnside's Counting Theorem (see \cite{H}), but we prove it here to be complete.  Since
$X=\bigsqcup_{i=1}^{|X/G|}\orb_G(x_i),$ the Orbit-Stabilizer Theorem implies $|X|=|X/G||G|/m,$ as required.
\end{proof}

By definition, $\SL_n(\F_q)$ acts uniformly of some order $m_i$ on $\strata_i$, for $1\leq i\leq N$.  Thus,
$$|\QS_r(\SL_n(\F_q))|=\sum_{i=1}^N\frac{m_i|\strata_i|}{q^{\frac{n(n-1)}{2}}\prod_{k=2}^n(q^k-1)}. $$

We now specialize to $n=2$, $q$ odd, and $r\geq 2$.  The special case of $r=1$, the character variety is $\C$ and so its E-polynomial is $q$.  Excluding $p=2$ does not change our results since we need only have a counting function that works on a dense set of primes.

\subsection{Strata}
For the rest of this paper, let $G=\SL_2(\F_q)$ and denote $\hm(\fr_r, G)$ by $\R_r$.  We also assume that $p$ is odd and $r\geq 2$.  Let $\mathbb{I}$ be the identity matrix.  Denote the algebraic closure of $\F_q$ by $\overline{\F}_q$. 

Let $Z$ be the center of $G$.  It is easy to see that $Z=\{\pm \mathbb{I}\},$ and so has order 2 if and only if the characteristic of $\F_q$ is odd.

\begin{definition}\label{stratadef}
\begin{enumerate}
\item[]

\item[$1.$]  Define $\strata_Z$ to be $\{\rho\in \R_r\ |\ \rho(w)\in Z,\text{ for all } w\in \fr_r\}.$  We call these homomorphisms central, and this set the central stratum.

\item[$2.$] Let $D$ denote the set of diagonal matrices in $G$.  Define $\strata_D$ to be $\{\rho\in \R_r\ |\text{ there exists } g\in G\text{ such that } g\rho(w)g^{-1}\in D,\text{ for all } w\in \fr_r\}-\strata_Z.$  We call these homomorphisms diagonalizable, and this set the diagonalizable stratum.

\item[$3.$] Let $\overline{D}$ denote the set of diagonal matrices in $\SL_2(\overline{\F}_q)$.  Define $\strata_{\overline{D}}$ to be $\{\rho\in \R_r\ |\text{ there exists } g\in \SL_2(\overline{\F}_q)\text{ such that } g\rho(w)g^{-1}\in \overline{D},\text{ for all } w\in \fr_r\}-\strata_D\cup\strata_Z.$  We call these homomorphisms extendably diagonalizable, and this set the extendably diagonalizable stratum.

\item[$4.$] Let $U=\left\{\mm{\pm 1}{a}{0}{\pm1} | \ a \in\F_q \right\}.$  Define $\strata_U$ to be
$\{\rho\in \R_r\ |\text{ there exists } g\in G \text{ such that } g\rho(w)g^{-1}\in U,\text{ for all } w\in \fr_r\}-\strata_{\overline{D}}\cup\strata_D\cup\strata_Z.$  We call these homomorphisms projectively unipotent, and this set the projectively unipotent stratum.

\item[$5.$] Define $\strata_{N}$ to be $\R_r-\strata_U\cup\strata_{\overline{D}}\cup\strata_D\cup\strata_Z.$  We call these homomorphisms non-Abelian, and this set the non-Abelian stratum.  
\end{enumerate}
\end{definition}

Call an element of $\rho\in\R_r$ {\it Abelian} if its image is an Abelian group, and call $\rho$ {\it reducible over} $H\subset \SL_2(\overline{\F}_q)$ if there exists $g\in H$ so that $g\rho g^{-1}$ has its image contained in the set of upper-triangular matrices.  When $H$ is not specified, we mean reducible over $\SL_2(\overline{\F}_q)$. Representations will be called {\it absolutely irreducible} if $\rho$ is not reducible over $\SL_2(\overline{\F}_q)$.  We will see that $\strata_{N}$ consists of absolutely irreducible homomorphims, and also reducible homomorphisms that are not projectively unipotent, not (extendably) diagonalizable, and not central.

\begin{prop}\label{strataprop}
The sets defined in Definition \ref{stratadef} are disjoint conjugate invariant sets whose union equals $\R_r$.  Moreover,  (a) $\strata_{N}$ is exactly the set of non-Abelian homomorphisms which consists of absolutely irreducible homomorphims and non-Abelian reducible homomorphims, and (b) $G$ acts on each stratum uniformly.
\end{prop}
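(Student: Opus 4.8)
The plan is to verify the three assertions of the proposition in turn: disjointness-and-covering, the characterization of $\strata_N$ in part (a), and the uniform action in part (b). For the first assertion, disjointness is automatic from the way Definition \ref{stratadef} is written: each of $\strata_D$, $\strata_{\Dbar}$, $\strata_U$, and $\strata_N$ is defined by subtracting off the union of the previously defined strata, so the five sets are pairwise disjoint by construction, and their union is all of $\R_r$ because $\strata_N$ is defined as the complement of the union of the other four. The one point requiring a word is conjugate invariance: each defining condition is of the form ``there exists $g$ in (a conjugation-closed subgroup $H$) conjugating $\rho$ into a fixed subset $B\subseteq H$,'' and such a condition is preserved under replacing $\rho$ by $h\rho h^{-1}$ for $h\in G$, since $G$ normalizes $\SL_2(\Fbar_q)$ — actually one should note $G\subseteq\SL_2(\Fbar_q)$, so the relevant subgroups $H$ are all normalized appropriately, and $\strata_Z$ is visibly conjugate invariant as $Z$ is central. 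Since a difference of conjugate invariant sets is conjugate invariant, all five strata are conjugate invariant.

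For part (a), I would argue that $\strata_N$ is exactly the set of non-Abelian homomorphisms by showing the complement $\strata_Z\cup\strata_D\cup\strata_{\Dbar}\cup\strata_U$ is exactly the set of \emph{Abelian} homomorphisms. One inclusion is clear: central, simultaneously diagonalizable (over $\F_q$ or $\Fbar_q$), and projectively unipotent homomorphisms all have Abelian image, since $Z$, $\Dbar$, and $U$ are each Abelian subgroups of $\SL_2(\Fbar_q)$. For the reverse inclusion, suppose $\rho$ is Abelian, so the image is a finite Abelian subgroup $A$ of $\SL_2(\Fbar_q)$; I would invoke the classification of such subgroups: every element of $A$ is semisimple (diagonalizable over $\Fbar_q$) or unipotent, and a commuting family of matrices in $\SL_2$ can be simultaneously triangularized over $\Fbar_q$. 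If all elements are semisimple, simultaneous triangularization forces simultaneous diagonalization over $\Fbar_q$, landing $\rho$ in $\strata_Z\cup\strata_D\cup\strata_{\Dbar}$ according to whether the conjugating matrix (and hence the eigenvalues) can be taken in $G$, or only in $\SL_2(\Fbar_q)$, or whether the image is central. If some element of $A$ is non-semisimple, it is $\pm$(unipotent); commuting with such an element in $\SL_2$ pins down a unique invariant line, and one checks every element of $A$ then lies in the corresponding conjugate of $U$, so $\rho\in\strata_U$ — here I should double-check that the conjugation can be arranged over $\F_q$ (the invariant line is defined over $\F_q$ since the element is), which places $\rho$ in $\strata_U$ rather than something larger. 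This accounts for all Abelian $\rho$, so $\strata_N$ is precisely the non-Abelian homomorphisms, and such a homomorphism is either absolutely irreducible or reducible over $\SL_2(\Fbar_q)$; in the latter case it is non-Abelian and reducible, giving the stated dichotomy.

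For part (b), the uniform action claim, I would compute (or cite the later sections' computation of) the stabilizer of a representative in each stratum and observe it depends only on the stratum. Concretely: on $\strata_Z$ the stabilizer is all of $G$; on $\strata_D$ and $\strata_{\Dbar}$ the stabilizer of a non-central diagonalizable $\rho$ is the normalizer data of the relevant maximal torus — more precisely, since $\rho$ is non-central, not every generator is scalar, so the simultaneous centralizer is the maximal torus $T$ containing the image (the centralizer of a non-central semisimple element of $\SL_2$ is a maximal torus), which has order $q-1$ in the $\F_q$-split case and $q+1$ in the non-split case, uniformly across the stratum; on $\strata_U$ the centralizer of a non-central unipotent-type element is $\{\pm\mathbb{I}\}\cdot(\text{unipotent radical})$ of order $2q$, again uniform; and on $\strata_N$, whether $\rho$ is absolutely irreducible (Schur's lemma over $\Fbar_q$ forces the stabilizer to be $Z$, order $2$) or reducible-non-Abelian (the image is contained in a Borel but is not contained in a torus or in $U$, and one checks the simultaneous centralizer is again exactly $Z$), the stabilizer has order $2$. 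So in every stratum the stabilizer order is constant, which is exactly uniformity.

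The main obstacle I anticipate is the non-Abelian reducible case in both (a) and (b): one must carefully rule out that such a $\rho$ accidentally lands in $\strata_D$, $\strata_{\Dbar}$, or $\strata_U$ (it does not, by the complement definitions, but one must confirm the image genuinely fails to be diagonalizable or projectively unipotent), and one must verify its simultaneous centralizer collapses to $Z$ even though the individual matrices have larger centralizers — this requires using that the generators together generate a non-Abelian group inside a Borel, so no single invariant structure (torus or unipotent line) is preserved by all of them, forcing the common centralizer down to the center. Getting the over-$\F_q$ versus over-$\Fbar_q$ bookkeeping exactly right in the semisimple Abelian case (distinguishing $\strata_D$ from $\strata_{\Dbar}$) is the other delicate point, and hinges on whether the characteristic polynomial of the generators splits over $\F_q$.
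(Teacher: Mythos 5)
Your proposal is correct, but it follows a genuinely different route from the paper. The paper's own proof of this proposition is largely a deferral: it handles disjointness, conjugate invariance, and the absolutely irreducible part (Burnside's theorem for non-Abelianness, Schur's lemma for the order-$2$ stabilizer), and then postpones the two hard points — that $\strata_N$ contains no Abelian homomorphisms, and uniformity on the remaining strata — to part (3) of the No-Mixing Theorem and to the explicit stabilizer lemmata of the next section (orders $q-1$, $2q$, $q+1$, $2$), all of which are proved by hands-on matrix and field-of-definition computations. You instead argue structurally: for (a) you classify Abelian subgroups of $\SL_2(\overline{\F}_q)$ (commuting semisimple elements are simultaneously diagonalizable; the centralizer of a non-central element $\pm u$ with $u$ unipotent is $\{\pm\id\}$ times the unipotent group fixing its line, which is defined over $\F_q$), and for (b) you identify each stabilizer with a centralizer — all of $G$, a split torus of order $q-1$, a non-split torus of order $q+1$, the group $U$ of order $2q$, or the center. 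This is shorter and more conceptual, at the cost of importing standard facts about $\SL_2(\F_q)$ (orders of the $\F_q$-points of split and non-split maximal tori, and rational conjugacy of split tori) that the paper deliberately re-derives by elementary counting because those computations are reused later for the strata counts. One spot in your sketch deserves an explicit line: for uniformity of order $q+1$ on $\strata_{\Dbar}$ you tacitly assume the maximal torus containing the image is non-split; this is true, but needs the observation that if some component has distinct eigenvalues in $\F_q$ (or if the torus were split) then $\rho$ would be $G$-conjugate into $D$ and hence excluded — in the paper this is exactly the role of Lemma \ref{basefieldreductionlemma} and the No-Mixing Theorem. Similarly, your reducible non-Abelian stabilizer claim should record the short argument that the image contains a regular semisimple element (otherwise it lies in a conjugate of $\pm$ the unipotent group and is Abelian), whence the common centralizer collapses to $Z$; with these two remarks supplied, your argument is complete and bypasses the Diophantine computations the paper relies on.
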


\begin{proof}
By definition all the sets are disjoint from each other, are conjugate invariant, and $\R_r(G)=\strata_Z\cup\strata_D\cup\strata_{\overline{D}}\cup\strata_U\cup\strata_{N}$.   It is not hard to see that $\strata_Z\cup\strata_D\cup\strata_{\overline{D}}\cup\strata_U$ are Abelian and reducible over $\SL_2(\overline{\F}_q)$, and $\strata_{N}$ is by definition their complement.  Thus, $\strata_N$ contains all absolutely irreducible homomorphims (they are non-Abelian since by Burnside's Theorem (\cite{Lang}, p.649) they algebraically generate all $2\times 2$ matrices but if they were Abelian they would generate an Abelian algebra which is a strict subset).  By Shur's Lemma (see \cite{J}) the stabilizer of the absolutely irreducible representation must be the center of $G$, and so $G$ acts uniformly on that subset of $\strata_N$.  So to prove (a) we need to show that $\strata_{N}$ does not contain any Abelian homomorphisms (therefore Abelian homomorphisms are necessarily reducible over $\overline{\F}_q$).  This 
will follow from part (3) of the No-Mixing Theorem below.  We will prove the rest of part (b) in the next section.
\end{proof}

\begin{definition}
Let the non-Abelian reducible representations be denoted by $\strata_{\NR}$ (so $\strata_{\NR}\subset \strata_{N}$), and let the absolutely irreducible representations be denoted by $\strata_{\AI}$.  Thus, $\strata_{\AI}=\strata_{N}-\strata_{\NR}$.  Lastly, denote the Abelian representations by $\strata_{\Ab}$; so $\strata_{\Ab}=\strata_Z\cup\strata_D\cup\strata_{\overline{D}}\cup\strata_U$.
\end{definition}

\begin{remark}
The above proposition also shows that the number of strata for $q$ odd and $r\geq 2$ does not depend on $q$ or $r$.  In particular, excepting $r=1$ or $q=2^k$, there are always 5 strata.  We conjecture that for $\QS_r(\SL_n(\F_q))$ the number of strata $N$, for $r\geq 2$ and $q$ such that $\gcd(q,n)=1$, depends only on $n$.
\end{remark}

In analogy with the usual notion of quadratic residues over $\Z_p$, we will call an element $a\in \F_q^*$ a quadratic residue if there exists a solution in $\F_q$ to the equation $x^2=a$.  Otherwise, $a$ is called a quadratic non-residue.  We will also use the Legendre symbol $\left(\frac{a}{q}\right)$ to be $1$ if $a$ is a residue and $-1$ otherwise.

\begin{lemma}\label{residuelemma}
Let $\F_q$ be a finite field of order $q=p^k$ where $p$ is an odd prime.  Then there exists $(q-1)/2$ residues in $\F_q^*$.  Moreover, for all $a,b\in \F_q^*$, $\left(\frac{a}{q}\right)\left(\frac{b}{q}\right)=\left(\frac{ab}{q}\right)$. 
\end{lemma}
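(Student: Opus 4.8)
The plan is to exploit the fact that $\F_q^*$ is a cyclic group of order $q-1$, which is even precisely because $q=p^k$ with $p$ odd. First I would consider the squaring homomorphism $\sigma\colon \F_q^*\to \F_q^*$, $\sigma(x)=x^2$. Its image is by definition the set $R$ of quadratic residues, and its kernel is $\{x\in\F_q^*: x^2=1\}$. Since $x^2-1$ has at most two roots in a field and both $1$ and $-1$ are roots, distinct because the characteristic is odd, the kernel has order exactly $2$. By the first isomorphism theorem $|R|=|\F_q^*|/2=(q-1)/2$, which is the first assertion.

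For multiplicativity of the Legendre symbol, I would note that $R=\sigma(\F_q^*)$ is a subgroup of $\F_q^*$ of index $2$, so the quotient map $\pi\colon\F_q^*\to \F_q^*/R$ takes values in a group of order $2$, which I identify with $\{\pm 1\}\subset\Z$. By construction $\left(\frac{a}{q}\right)=\pi(a)$: indeed $\left(\frac{a}{q}\right)=1$ iff $a\in R$ iff $\pi(a)$ is the identity. Since $\pi$ is a group homomorphism and $\{\pm1\}$ multiplies as in $\Z$, we conclude $\left(\frac{a}{q}\right)\left(\frac{b}{q}\right)=\pi(a)\pi(b)=\pi(ab)=\left(\frac{ab}{q}\right)$ for all $a,b\in\F_q^*$.

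Alternatively, and more concretely, one can fix a generator $g$ of $\F_q^*$ and write $a=g^{i}$ for each $a\in\F_q^*$. Then $a$ is a residue exactly when $i$ is even: if $i=2j$ then $a=(g^{j})^2$, and conversely a square $g^{2m}$ has even exponent modulo $q-1$, using again that $q-1$ is even. Hence $\left(\frac{a}{q}\right)=(-1)^{i}$, and both the count $(q-1)/2$ and the multiplicativity drop out immediately from the properties of exponents. There is no genuine obstacle in this lemma; the only places the hypothesis ``$p$ odd'' is needed are to guarantee that $q-1$ is even and that $1\neq -1$ in $\F_q$, and both of these enter exactly where indicated above.
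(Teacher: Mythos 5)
Your proof is correct and follows essentially the same route as the paper: both arguments rest on $\F_q^*$ being cyclic of even order, so that the squares form an index-$2$ subgroup, with the count $(q-1)/2$ and the multiplicativity of the Legendre symbol read off from the quotient $\F_q^*/R\cong\Z_2$ (your generator-and-parity variant is just the paper's ``isomorphic to $\Z_{2m}$'' observation made explicit). No further comment is needed.
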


\begin{proof}
It is just the observation that the $\F_q^*$ is cyclic of even order and so isomorphic to $\Z_{2m}$, which has exactly half its elements multiples of $2$.  Thus in $\F_q^*$ there are $(q-1)/2$ squares.  The set of these squares is a subgroup $S$, so $\F_q^*/S\cong \Z_2$.  Thus, a non-residue times a non-residue must be a residue.  From the definition of residue alone, a residue times a residue is a residue, and a residue times a non-residue is a non-residue.  The result follows.
\end{proof}

\begin{remark}\label{uniqueextension}
Any element $a\in \F_q$ which is a quadratic non-residue produces a quadratic extension $\F_q(\sqrt{a})=\F_q[x]/(x^2-a)$, and by the proof of the above lemma $\F_q(\sqrt{a}) = \F_q(\sqrt{b})$ for any quadratic non-residues $a,b$ (since $\sqrt{a}\sqrt{b^{-1}}\in \F_q$).  Thus, there is a unique quadratic extension of $\F_q$; which we denote by $\F_{q^2}$.
\end{remark}

We will denote column vectors $(x,y)^\dagger$, where the symbol $\dagger$ means transpose.

\begin{lemma}
\label{eigenlemma}
Let $M\in\SL_2(\F_q)$, let $v=( v_1, v_2)^\dagger$ be an eigenvector of $M$ with $e$ its eigenvalue.  Suppose $e\in \F_{q^m}-\F_q$ for some non-trivial field extension $\F_{q^m}/\F_q$. Then $v\notin \F_q^{\times 2}$.   Moreover, $e$ is in the unique quadratic extension $\F_{q^2}$.
\end{lemma}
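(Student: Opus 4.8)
The plan is to argue via the characteristic polynomial of $M$ and a dimension/linear-algebra count over the relevant fields. First I would recall that since $M\in\SL_2(\F_q)$, its characteristic polynomial is $\chi_M(x)=x^2-\tr(M)x+1\in\F_q[x]$, a monic quadratic with coefficients in $\F_q$. The eigenvalue $e$ is a root of $\chi_M$, hence algebraic over $\F_q$ of degree at most $2$; since by hypothesis $e\notin\F_q$, its degree is exactly $2$, so $e\in\F_{q^2}$, and by Remark \ref{uniqueextension} this quadratic extension is the unique one. That disposes of the last sentence of the statement.

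For the main assertion that $v\notin\F_q^{\times2}$, I would argue by contradiction: suppose $v=(v_1,v_2)^\dagger$ with $v_1,v_2\in\F_q$, not both zero. Then $Mv=ev$ reads as two equations with left-hand sides in $\F_q$ (entries of $M$ times entries of $v$) and right-hand sides $ev_1,ev_2$. Pick an index $i$ with $v_i\neq0$; then $e=(Mv)_i/v_i$, and both numerator and denominator lie in $\F_q$, forcing $e\in\F_q$, contradicting $e\in\F_{q^m}-\F_q$. Hence no such $\F_q$-rational eigenvector exists.

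One subtlety to handle cleanly: I should make sure the argument does not secretly assume $M$ is diagonalizable over $\overline{\F}_q$ or that $e$ has multiplicity one — it does not, since the computation $e=(Mv)_i/v_i$ only uses that $v$ is a genuine eigenvector for $e$. The only real content is the observation that applying a matrix over $\F_q$ to a vector over $\F_q$ produces a vector over $\F_q$, so any eigenvalue realized by an $\F_q$-rational eigenvector must itself be $\F_q$-rational. I expect the main (very mild) obstacle is simply being careful about the edge case where one of $v_1,v_2$ vanishes, which is why I would phrase the final step by choosing a nonzero coordinate rather than dividing blindly.
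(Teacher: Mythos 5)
Your proposal is correct and follows essentially the same route as the paper: the eigenvector claim is the identical contradiction argument (an $\F_q$-rational eigenvector forces $e=(Mv)_i/v_i\in\F_q$), and the eigenvalue claim again rests on $e$ being a root of $x^2-\tr(M)x+1\in\F_q[x]$. The only cosmetic difference is that the paper makes the quadratic formula explicit, writing $e=2^{-1}\bigl(t\pm\sqrt{t^2-4}\bigr)$ with $t^2-4$ a non-residue so as to invoke Remark \ref{uniqueextension} directly, whereas you phrase it as a degree-two argument; both amount to the same thing.
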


\begin{proof}
Suppose that $v\in\F_q^{\times 2}$. Then $Mv=ev\in\F_q^{\times 2}$.  Without loss of generality let $v_1\neq0$. Since $ev_1:=\ell\in\F_q^*$, then $e=v_1^{-1}\ell\in \F_q$, which contradicts that $e\in\F_{q^m}-\F_q$.

Since $e$ is a zero of the characteristic polynomial $x^2-tx+1$ where $t=\tr(M)\in \F_q$, $e=2^{-1}(t\pm \sqrt{t^2-4})\in\F_q(\sqrt{t^2-4})$.  Since $e\notin \F_q$, $t^2-4$ is a quadratic non-residue and Remark \ref{uniqueextension} implies that $e$ is in the unique extension $\F_{q^2}$.  

\end{proof}

\begin{remark}
With respect to Lemma \ref{eigenlemma}, since the scalar of an eigenvector is again an eigenvector, we cannot make any further conclusions about where the coordinates of $v$ lie.  However, $v_1$ and $v_2$ are both non-zero since at least one must be, and if the other was zero, then by scaling the non-zero coordinate to 1, we contradict $e\notin\F_q$.  Then, scaling $v$ by $1/v_1$ we obtain the eigenvector $(1,w)^\dagger$ where $w=v_2/v_1\notin\F_q$.  However then, denoting the coordinates of $M$ by $m_{ij}$, we have $m_{11}+m_{12}w=e$, which implies that $m_{12}\not=0$ and so $w=(e-m_{11})/m_{12}\in \F_{q^2}$.  Thus, if either $v_1$ or $v_2$ is in $\F_{q^2}$, then the other is in $\F_{q^2}$ too.
\end{remark}

\begin{lemma}\label{basefieldreductionlemma}
Let $\rho=(A_1,...,A_r)\in \hm(\fr_r,G)$ and suppose that for each $i$ the eigenvalues of $A_i$ are in $\F_q$, and for at least one $A_i$ its eigenvalues are not repeated.   If $\rho$ is upper-triangularizable over $\overline{\F}_q$, then $\rho$ is upper-triangularizable over $\F_q$.  If $\rho$ is diagonalizable over $\overline{\F}_q$, then $\rho$ is diagonalizable over $\F_q$. 
\end{lemma}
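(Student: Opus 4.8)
The plan is to argue that simultaneous upper-triangularization (resp.\ diagonalization) over $\overline{\F}_q$ can be achieved using only a change of basis defined over $\F_q$, by exploiting the one factor $A_{i_0}$ with distinct eigenvalues in $\F_q$. First I would fix such an index $i_0$ and let $\lambda_1\neq\lambda_2\in\F_q^*$ be the eigenvalues of $A_{i_0}$. Since these lie in $\F_q$ and are distinct, the eigenspaces $V_1,V_2\subset\F_q^{\times 2}$ are one-dimensional and already defined over $\F_q$: concretely, $V_j=\ker(A_{i_0}-\lambda_j\id)$ is the kernel of a rank-one matrix with entries in $\F_q$, hence is spanned by a vector $v_j\in\F_q^{\times 2}$. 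Thus over $\F_q$ there is a $g\in\GL_2(\F_q)$ (which we may rescale to lie in $\SL_2(\F_q)$, absorbing the determinant into one column) conjugating $A_{i_0}$ into the diagonal matrix $\mathrm{diag}(\lambda_1,\lambda_2)$.

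Next I would show that after this single conjugation by $g\in G$, every $A_i$ is \emph{already} upper triangular (resp.\ diagonal). The hypothesis gives some $h\in\SL_2(\overline{\F}_q)$ with $h A_i h^{-1}$ upper triangular for all $i$; in particular $hA_{i_0}h^{-1}$ is upper triangular with diagonal entries $\lambda_1,\lambda_2$ in some order. The key point is that a line in $\overline{\F}_q^{\times 2}$ fixed by the diagonalizable matrix $gA_{i_0}g^{-1}=\mathrm{diag}(\lambda_1,\lambda_2)$ must be one of the two coordinate axes (since $\lambda_1\neq\lambda_2$, these are the only eigenlines, even over $\overline{\F}_q$). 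The common invariant line of $\{hA_ih^{-1}\}$ pulls back, after further conjugating by $gh^{-1}$, to a common invariant line of $\{gA_ig^{-1}\}$ that is invariant under $\mathrm{diag}(\lambda_1,\lambda_2)$; hence it is a coordinate axis, hence defined over $\F_q$. Up to swapping basis vectors (a conjugation in $\GL_2(\F_q)$), we may take it to be the first coordinate axis, so all $gA_ig^{-1}$ share the invariant subspace $\F_q\cdot e_1$, i.e.\ all $gA_ig^{-1}$ are upper triangular. For the diagonalizable case, the same argument applied to the invariant \emph{complement} (which over $\overline{\F}_q$ is also a coordinate axis by the eigenvalue distinctness of $A_{i_0}$) shows all $gA_ig^{-1}$ preserve both coordinate axes, hence are diagonal.

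The main obstacle I anticipate is the bookkeeping around reducing $\GL_2(\F_q)$-conjugations to $\SL_2(\F_q)$-conjugations and handling the possible swap of eigenlines: conjugating $\mathrm{diag}(\lambda_1,\lambda_2)$ to $\mathrm{diag}(\lambda_2,\lambda_1)$ requires the permutation matrix $\smm{0}{1}{-1}{0}\in\SL_2(\F_q)$, which is fine, but one must check that all these adjustments can be carried out inside $\SL_2(\F_q)$ rather than merely $\GL_2(\F_q)$ — this is where rescaling a transition matrix to determinant $1$ matters, and it works precisely because scalars act trivially by conjugation. A secondary subtlety is ruling out the degenerate possibility that the $\overline{\F}_q$-invariant line is \emph{not} an eigenline of $gA_{i_0}g^{-1}$; but an invariant line of a diagonalizable operator with distinct eigenvalues is necessarily an eigenline, so this cannot happen. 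Everything else is the routine linear algebra of triangular forms, and Lemma \ref{eigenlemma} is available to guarantee the relevant eigenvectors of the other $A_i$ behave well, though in fact only the distinct-eigenvalue factor $A_{i_0}$ is needed to pin down the flag.
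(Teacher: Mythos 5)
Your argument is correct, and it rests on the same underlying idea as the paper's proof -- the factor $A_{i_0}$ with distinct eigenvalues in $\F_q$ pins the common invariant flag down to $\F_q$-rational eigenlines -- but the execution is genuinely different and cleaner. The paper starts from the $\overline{\F}_q$-conjugator $g$, observes its columns are eigenvectors of $A_{i_0}$, writes down explicit $\F_q$-rational eigenvectors $(b,x-a)^\dagger$, $(b,x^{-1}-a)^\dagger$ after a case analysis on the entries of $A_{i_0}$ (the cases $b=c=0$, $b=0\neq c$, $b\neq 0$ are treated separately), assembles a transition matrix $C$ over $\F_q$ by hand, rescales its columns to determinant $1$, and then verifies triangularity or diagonality by direct matrix computation. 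You instead conjugate first by an $\F_q$-rational matrix diagonalizing $A_{i_0}$ (its eigenspaces being kernels of rank-one matrices over $\F_q$) and then invoke the abstract fact that any $\overline{\F}_q$-invariant line of $\mathrm{diag}(\lambda_1,\lambda_2)$ with $\lambda_1\neq\lambda_2$ is a coordinate axis, hence $\F_q$-rational; this handles all of the paper's cases uniformly, dispenses with the explicit computations, and the Weyl-element swap by $\smm{0}{1}{-1}{0}\in\SL_2(\F_q)$ plays the same role as the paper's matrix $f$. What the paper's computational route buys is reusable explicit formulas (the eigenvector parametrization and the conjugating matrix $C$) that are quoted later, e.g.\ in the proof of Proposition \ref{stratasdbarcount}; what your route buys is brevity and conceptual transparency.

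One small caution: your parenthetical justification that the reduction from $\GL_2(\F_q)$ to $\SL_2(\F_q)$ ``works precisely because scalars act trivially by conjugation'' is the weaker of the two mechanisms you mention, since rescaling the whole matrix by a scalar changes the determinant by a square and so cannot always reach determinant $1$ over $\F_q$. The mechanism you state first -- absorbing the determinant into a single column, which remains an eigenvector -- is the correct one (and is exactly what the paper does when it scales the columns of $C$), so the proof stands as written provided you rely on that.
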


\begin{proof}
By assumption, there exists $g\in\SL_2(\overline{\F}_q)$ so that $g^{-1}\rho g$ is upper-triangular.  Since at least one such matrix, say $g^{-1}A_ig=\mm{x}{y}{0}{x^{-1}}$ with $x\in \F_q^*$ and $y\in\overline{\F}_q$, has non-repeated eigenvalues, we can conjugate $g^{-1} \rho g$ further by $\left(\begin{array}{cc}1& \frac{y}{x-x^{-1}}\\0&1\end{array}\right)$ to make $A_i$ diagonal yet keep $g^{-1}\rho g$ upper-triangular.  We assume we have done so without changing notation; in particular, $g^{-1}A_ig=\mm{x}{0}{0}{x^{-1}}$.   Now denote $A_i=\mm{a}{b}{c}{d}$, the component matrix before conjugation.  Assuming $g$ has the form $\mm{v_1}{u_1}{v_2}{u_2}$, then $v=(v_1, v_2)^\dagger$ is an eigenvector of $A_i$ with eigenvalue $x$, and $u=(u_1, u_2)^\dagger$ is an eigenvector of $A_i$ with eigenvalue $x^{-1}$; clearly $\{u,v\}$ are linearly independent.  Since $A_1,...,A_r$ are simultaneously upper-triangularizable, they share a common eigenvector, so some multiple of $v$ or $u$ is a simultaneous eigenvector for 
$A_1,...,A_
r$.   But every multiple of an eigenvector is again an eigenvector, so $v$ or $u$ is a common eigenvector.  If $\rho$ were diagonalizable, then both $v$ and $u$ would be simultaneous eigenvectors.

Now suppose $b$ and $c$ are both $0$, that is $A_i$ is diagonal with distinct eigenvalues, then since the only eigenvectors of $A_i$ are multiples of $e_1=(1, 0)^\dagger$ and $e_2=(0, 1)^\dagger$, $e_1$ or $e_2$ is a common eigenvector.  In the first case $\rho$ is upper-triangular to begin with, and in the second case it is lower-triangular to begin with.  Since conjugating $\rho$ by $f:=\left(
\begin{array}{cc}
 0 & 1 \\
 -1 & 0 \\
\end{array}
\right)$ turns a lower-triangular representation into an upper-triangular one, we conclude that in the case $b=0=c$, that $\rho$ is upper-triangularizable over the base field $\F_q$.  Moreover, the same argument shows that if $\rho$ was diagonalizable, the matrices $A_1,...,A_r$ would share two common eigenvectors, and so they would be have to be $e_1$ and $e_2$.  In that case we conclude that $\rho$ was diagonal to begin with.

If $b=0$ but $c\not =0$, then by conjugating $\rho$ by $f$ and replacing $g$ by $fg$ the $i$-th component $A_i$ becomes strictly upper-triangular and the $i$-th component of $g^{-1}\rho g$ remains diagonal.   So we can assume that $b\not=0$.

Then, we conclude that there exist $\lambda, \mu\in \overline{\F}_q$ such that $(v_1, v_2)^\dagger=\lambda(b, x-a)^\dagger$ and $(u_1, u_2)^\dagger=\mu(b, x^{-1}-a)^\dagger,$ and so $g=\mm{\lambda b}{\mu b}{\lambda(x-a)}{\mu(x^{-1}-a)}$.  However, as noted before, all multiples $\lambda, \mu$ provide simultaneous eigenvectors.  So in fact, we know that one of $v:=(b, x-a)^\dagger$ or $u:=(b, x^{-1}-a)^\dagger$ is a simultaneous eigenvector (both are if $\rho$ was diagonalizable).  To verify that these vectors are in fact eigenvectors for $A_i$, we check $g^{-1}A_ig$ in the case $\lambda=1=\mu$ to obtain $$\left(
\begin{array}{cc}
 -b (x (a+d)-a d+b c-1) &
   \frac{b x^2 (a d-b c)-b x
   (a+d)+b}{x^2} \\
 b ((a-x) (x-d)+b c) & \frac{b
   (a (-d) x+a+x (b
   c-1)+d)}{x} \\
\end{array}
\right).$$

The lower left entry and the upper right entry each simplify (using the identities $ad-bc=1$ and $a+d=x+x^{-1}$) to multiples of the characteristic polynomial in terms of $x$.  Hence they are each 0 since $x$ is an eigenvalue.
 
We claim that $C:=\mm{b}{b}{x-a}{x^{-1}-a}$, after possibly conjugating by $f$, will upper-triangulize $\rho$, and in fact will diagonalize $\rho$ if $\rho$ was diagonalizable.  Since this matrix is over the base field, we are done.

First note that $C$ is invertible since $\det(C)=0$ if and only if $b(x^{-1}-a-x+a)=0$, which only occurs if $b=0$ or $x=\pm 1$.  Neither is true given our assumptions at this point in the argument.  Moreover, we can scale the columns of $C$ over the base field (as described above), and preserve their being simultaneous eigenvectors over $\F_q$ and yet arrange for the $\det(C)=\lambda\mu b(x^{-1}-x)=1$; for instance $\lambda=1$ and $\mu=(b(x^{-1}-x))^{-1}$.

Indeed, suppose that $v$ is the simultaneous eigenvector with eigenvalue $x_j$ for the matrix factor $A_j$, then for each $A_j$, we have $$A_j[v|u]=[A_jv |A_ju]=[x_jv|A_ju]=[v|u]\mm{x_j}{*}{0}{*},$$ with second column $(*, *)^\dagger=C^{-1}A_jw$.  Thus, $C^{-1}\rho C$ is upper-triangular where $C$ is over the base field.  

If $u$ is the simultaneous eigenvector, then likewise we have $$A_j[v|u]=[A_jv |A_ju]=[A_jv|x_ju]=[v|u]\mm{*}{0}{*}{x_j},$$ with first column $(*, *)^\dagger=C^{-1}A_jv$.  Thus $C^{-1}\rho C$ is lower-triangular over the base field.  Conjugating by $f$ makes it upper-triangular over $\F_q$.

And if both $v$ and $u$ are simultaneous eigenvectors, then for each $A_j$ we have $$A_j[v|u]=[A_jv |A_ju]=[x_j v|x_j^{-1}u]=[v|u]\mm{x_j}{0}{0}{y_j},$$
and thus $C^{-1}\rho C$ is diagonal where again $C$ is over $\F_q$.
\end{proof}

Denote the free group on $r$ letters as $\fr_r=\langle \gamma_1,...,\gamma_r\rangle$.  The next proposition loosely says that if $\rho\in \hm(\fr_r,\SL_2(\F_q))$ is reducible, then $\rho(\gamma_i)$ for each $i$ either has all its non-trivial eigenvalues in the quadratic extension of $\F_q$ but not in $\F_q$ itself, or all of the eigenvalues are in $\F_q$--hence the name, ``no-mixing".

\begin{prop}[No-Mixing Theorem]\label{nomixing}  Let $q=p^k$ for $p$ an odd prime.
 Let $\rho\in \hm(\fr_r,\SL_2(\F_q))$ be reducible over $\overline{\F}_q$. Then:
 \begin{enumerate}
  \item For any fixed $1\leq i,j\leq r$ it is impossible for $\rho(\gamma_i)\neq \pm \mathbb{I}$ with eigenvalues in $\F_q$ and $\rho(\gamma_j)$ with an eigenvalue in $\overline{\F}_q-\F_q$.
  \item If there exists $1\leq i\leq r$ so $\rho(\gamma_i)$ has an eigenvalue in $\overline{\F}_q-\F_q$, then $\rho\in \strata_{\overline{D}}$.  Conversely, if $\rho\in \hm(\fr_r,\SL_2(\overline{\F}_q))$ is diagonalizable, and for all $1\leq i\leq r$ either $\rho(\gamma_i)$ has an eigenvalue of $\pm 1$, or $\rho(\gamma_i)$ has an eigenvalue not in $\F_q$ that satisfies $x^2-t_ix+1=0$ for $t_i\in \F_q$, then there exists $g\in \SL_2(\overline{\F}_q)$ so that $g\rho(\fr_r)g^{-1}\subset \SL_2(\F_q)$.
  \item If for all $1\leq i\leq r$, $\rho(\gamma_i)$ has eigenvalues in $\F_q$, then $\rho\not\in\strata_{\overline{D}}$, and $\rho$ is either in $\strata_Z\cup \strata_U\cup \strata_D$ or $\rho$ is non-Abelian (consequently $\strata_N$ contains only non-Abelian homomorphisms).
 \end{enumerate}
\end{prop}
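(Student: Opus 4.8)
The plan is to treat the three parts in turn, the main tools being Lemma~\ref{eigenlemma} (an eigenvalue in $\overline{\F}_q-\F_q$ lies in $\F_{q^2}$, and its eigenline is not $\F_q$-rational), Lemma~\ref{basefieldreductionlemma} (descent of triangularizability and diagonalizability to $\F_q$), and the Frobenius automorphism $\phi\colon x\mapsto x^q$ of $\SL_2(\overline{\F}_q)$, which fixes $\SL_2(\F_q)$ pointwise. For part (1), reducibility of $\rho$ over $\overline{\F}_q$ gives a common eigenvector $v$ for $\rho(\gamma_1),\dots,\rho(\gamma_r)$. If $\rho(\gamma_i)\neq\pm\mathbb{I}$ has both eigenvalues in $\F_q$, then each eigenspace of $\rho(\gamma_i)$ is the kernel of a rank-one matrix over $\F_q$, so each eigenline of $\rho(\gamma_i)$ is $\F_q$-rational; in particular $\langle v\rangle$ is. If $\rho(\gamma_j)$ has an eigenvalue $e\in\overline{\F}_q-\F_q$, then, the trace being in $\F_q$, the two eigenvalues $e,e^{-1}$ are distinct and both lie in $\F_{q^2}-\F_q$, so by Lemma~\ref{eigenlemma} and the remark following it neither eigenline of $\rho(\gamma_j)$ is $\F_q$-rational. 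As $v$ must lie on one of them, these are incompatible, which proves (1); it also shows that if some $\rho(\gamma_i)$ has an eigenvalue outside $\F_q$, then every $\rho(\gamma_k)$ is $\pm\mathbb{I}$ or has both eigenvalues in $\F_{q^2}-\F_q$.

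For the forward direction of part (2), suppose $\rho(\gamma_i)$ has an eigenvalue $e\notin\F_q$; then $e,e^{-1}\in\F_{q^2}-\F_q$ are distinct, and since the characteristic polynomial of $\rho(\gamma_i)$ has $\F_q$-coefficients, $e^q=e^{-1}$. Pick $h\in\SL_2(\F_{q^2})$ diagonalizing $\rho(\gamma_i)$, and, after replacing $h$ by $\smm{0}{1}{-1}{0}h$ if needed, arrange $hv\in\langle e_1\rangle$; then $h\rho h^{-1}$ is upper triangular with $h\rho(\gamma_i)h^{-1}=\mathrm{diag}(e,e^{-1})$. Applying $\phi$ entrywise fixes every $\rho(\gamma_k)$ but sends $h$ to $\phi(h)$ and $\mathrm{diag}(e,e^{-1})$ to $\mathrm{diag}(e^{-1},e)$, so $\phi(h)\rho\phi(h)^{-1}$ is again upper triangular and $\phi(h)^{-1}e_1$ is a common eigenvector of $\rho$, lying in the eigenline of $\rho(\gamma_i)$ for $e^{-1}$ and hence independent of $v=h^{-1}e_1$. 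Thus $\rho$ is simultaneously diagonalizable over $\F_{q^2}\subset\overline{\F}_q$; since $\rho(\gamma_i)\neq\pm\mathbb{I}$ has a non-$\F_q$ eigenvalue, $\rho\notin\strata_Z\cup\strata_D$, whence $\rho\in\strata_{\overline{D}}$. For the converse, diagonalize $\rho$ over $\overline{\F}_q$; the hypotheses force each diagonal entry $\mu$ into $\F_{q^2}$ with $\mu^q=\mu^{-1}$ (clear when the eigenvalue is $\pm1$; when it is a root outside $\F_q$ of $x^2-t_ix+1$ with $t_i\in\F_q$, the two roots are $\mu,\mu^{-1}$ and Frobenius interchanges them). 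Fix $\theta\in\F_{q^2}$ with $\theta^q=-\theta$ and set $h=\smm{1}{1}{\theta}{-\theta}$; then $h^{-1}\phi(h)=\smm{0}{1}{1}{0}$, and a short computation shows $h\,\mathrm{diag}(\mu,\mu^{-1})\,h^{-1}$ is $\phi$-fixed, hence lies in $\SL_2(\F_q)$, for every such $\mu$. Rescaling $h$ into $\SL_2(\overline{\F}_q)$ and composing with the diagonalizing conjugation produces the required $g$.

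For part (3), assume every $\rho(\gamma_i)$ has eigenvalues in $\F_q$. If $\rho\in\strata_{\overline{D}}$, then $\rho$ is diagonalizable over $\overline{\F}_q$ and, not being central, some $\rho(\gamma_i)\neq\pm\mathbb{I}$; being diagonalizable with eigenvalues in $\F_q$, this $\rho(\gamma_i)$ has two distinct $\F_q$-eigenvalues, so Lemma~\ref{basefieldreductionlemma} forces $\rho\in\strata_D\cup\strata_Z$, contradicting disjointness of the strata; hence $\rho\notin\strata_{\overline{D}}$. For the remaining dichotomy, since the strata partition $\R_r$ it is enough to show that an Abelian $\rho$ of this kind lies in $\strata_Z\cup\strata_U\cup\strata_D$. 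If all $\rho(\gamma_k)=\pm\mathbb{I}$, then $\rho\in\strata_Z$; otherwise choose $\rho(\gamma_{i_0})\neq\pm\mathbb{I}$, which every $\rho(\gamma_k)$ centralizes. If $\rho(\gamma_{i_0})$ has distinct eigenvalues, diagonalize it over $\F_q$: its centralizer in $\SL_2(\F_q)$ is the diagonal subgroup, so $\rho$ is diagonal over $\F_q$ and, being noncentral, $\rho\in\strata_D$. If $\rho(\gamma_{i_0})$ has a repeated eigenvalue, it is $\pm\mathbb{I}$ times a nontrivial unipotent; conjugating it into $U$ over $\F_q$, its centralizer in $\SL_2(\F_q)$ is exactly $U$, so the image of $\rho$ lies in $U$ and, being neither central nor diagonalizable, $\rho\in\strata_U$. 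A $\rho$ that is not Abelian falls into the other alternative, and together with the forward direction of part (2) this yields the parenthetical assertion that $\strata_N=\R_r-\strata_{\Ab}$ contains only non-Abelian homomorphisms.

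The step I expect to be the main obstacle is the Galois/Frobenius bookkeeping in part (2): carrying the whole descent over the correct field $\F_{q^2}$, normalizing conjugating matrices to have determinant $1$ without disturbing anything, checking that the two common eigenvectors in the forward direction are genuinely independent, and verifying that the single explicit $h$ in the converse conjugates the (diagonalized) images of all generators simultaneously into $\SL_2(\F_q)$. Parts (1) and (3) should then be routine linear algebra over $\F_q$ once the ``non-$\F_q$ eigenvalue forces a non-$\F_q$ eigenline'' dichotomy of Lemma~\ref{eigenlemma} is in hand.
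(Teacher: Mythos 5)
Your proof is correct, but it takes a genuinely different route from the paper on the two substantive parts. For (1), the paper argues by a long explicit computation: it normalizes the non-$\F_q$ component to be diagonal, parametrizes a putative conjugator $g$ by its eigenvector columns with scalars $\lambda,\mu$, and derives a contradiction by solving $\Im(m_{12})=\Im(m_{21})=0$ and showing $\Im(m_{22})\neq 0$; you instead observe that a non-central component with $\F_q$-eigenvalues has $\F_q$-rational eigenlines (kernels of rank-one matrices over $\F_q$), while by Lemma \ref{eigenlemma} no eigenline for an eigenvalue in $\F_{q^2}-\F_q$ is $\F_q$-rational, so a common eigenvector cannot exist --- a few lines in place of a page. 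For the forward direction of (2) the paper again computes $gA_ig^{-1}$ explicitly and uses multiplicativity of the Legendre symbol (Lemma \ref{residuelemma}); you apply the Frobenius $\phi(x)=x^q$ to the upper-triangularizing conjugation, using $e^q=e^{-1}$ to produce a second, independent common eigenvector and hence simultaneous diagonalizability, which is Galois descent in transparent form. For the converse, the paper conjugates by $g_{i_0}=\smm{1}{1}{d_{i_0}}{d_{i_0}^{-1}}$ and checks $g_{i_0}g_i^{-1}\in\SL_2(\F_q)$ via ratios of non-residues; your single $h=\smm{1}{1}{\theta}{-\theta}$ with $\theta^q=-\theta$ and $h^{-1}\phi(h)=\smm{0}{1}{1}{0}$ does the same job uniformly for all components at once (the two arguments are cousins: the paper's Legendre-symbol step is your descent fact in disguise). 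Part (3) is essentially the paper's argument reorganized through centralizers of a chosen non-central component rather than through an explicit common-eigenvector conjugation, and both correctly invoke Lemma \ref{basefieldreductionlemma} to exclude $\strata_{\Dbar}$. What each approach buys: the paper's computations are elementary, self-contained, and their by-products (e.g.\ the parametrization of $g$ by eigenvector columns) are reused later, for instance in the proof of Proposition \ref{stratasdbarcount}; your Frobenius argument is shorter, conceptually cleaner, and more likely to generalize (e.g.\ to $\SL_n$), at the cost of the field-of-definition and determinant-normalization bookkeeping you yourself flag, all of which you handle correctly.
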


\begin{proof}
Since $\rho$ is reducible, there is $h\in \SL_2(\overline{\F}_q)$ so $h\rho(\gamma_i) h^{-1}:=A_i$ are simultaneously upper-triangular.  Let $A_i=\left(\begin{array}{cc}a_i & x_i\\ 0 & a_i^{-1}\end{array}\right)$.  To simplify notation we now assume that $\rho$ is already in upper-triangular form.  Note that the characteristic polynomial says that $(a_i^{\pm 1})^2-t_i(a_i^{\pm 1})+1=0$ where $t_i\in \F_q$ is the trace of $\rho(\gamma_i)$.  Thus, since $p>2$, we can write $a_i^{\pm 1}=2^{-1}\left(t_i\pm \sqrt{t_i^2-4}\right)\in \F_q(\sqrt{t_i^2-4})$.  Note $\F_q(\sqrt{t_i^2-4})$ is a quadratic field extension if and only if $t_i^2-4$ is a quadratic non-residue which occurs if and only if $x^2-t_ix+1$ is an irreducible polynomial over $\F_q$.  In this case, we work in the quadratic field extension $\F_q(\sqrt{t_i^2-4})$.  Let $e_i:=\sqrt{t_i^2-4}$ for simplicity.  In analogy with the complex numbers, let $\Im(s+te_i)=t$ and $\Re(s+te_i)=s$, where $s,t\in\F_q$.  Note that $\Im$ and $\Re$ are $\F_q$-linear. 
 
We first prove item (1).  Take $A_i:=A=\left(\begin{array}{cc}a & x\\ 0 & a^{-1}\end{array}\right)$ with $a\in \F_q-\{0,\pm 1\}$ and $A_j:=B=\left(\begin{array}{cc}b & y\\ 0 & b^{-1}\end{array}\right)$ with $b\in \overline{\F}_q-\F_q$.  Since $a\not=b$, we can assume $i\not=j$.  

Since $b\not=\pm 1,0$, conjugating $\rho$ by $\left(\begin{array}{cc}1& \frac{y}{b-b^{-1}}\\0&1\end{array}\right)$ we can assume that $y=0$.  Note that this does not change the upper-triangular form of $\rho$.  We now assume we have done this so $B$ is diagonal.  If $x\not=0$ then there exists $\zeta\in\overline{\F}_q^*$ such that $x\zeta^2=1$.  Further conjugating $\rho$ by $\left(\begin{array}{cc}\zeta& 0\\0&\zeta^{-1}\end{array}\right)$ we can assume that $x=0$ or $x=1$.  Again, we note that this does not change the upper-triangular form of $\rho$, and again, we assume now we have done this.

According to our hypothesis, there exists $g\in \SL_2(\overline{\F}_q)$ so that $g\rho(\fr_r)g^{-1}\subset \SL_2(\F_q)$, and in particular $M:=gAg^{-1}$ and $N:=gBg^{-1}$ are both in $\SL_2(\F_q)$.  

Let $v$ and $w$ be the columns of $g$ with coordinates denoted by $v=(v_1,v_2)^\dagger$ and $w=(w_1,w_2)^\dagger$.  Since $N=gBg^{-1}$ and $B$ is diagonal, the columns of $g$ are eigenvectors for $N$ (with eigenvalues $b$ and $b^{-1}$).  From Lemma \ref{eigenlemma} and its proof, we can deduce that both $v$ and $w$ are necessarily in $\overline{\F}_q^{\times 2}-\F_q^{\times 2}$.  Moreover, by considering the upper left entry of $N=g\mm{b}{0}{0}{b^{-1}}g^{-1}=\mm{bv_1w_2-b^{-1}w_1v_2}{(b^{-1}-b)v_1w_1}{(b-b^{-1})v_2w_2}{b^{-1}v_1w_2-bw_1v_2},$ if any one of $v_1,v_2,w_1,w_2$ is zero, given that $v_1w_2-w_1v_2=\det(g)=1$, we conclude that $N$ is not in $\SL_2(\F_q)$. Thus all coordinates of $v,w$ are in fact non-zero.  Since $N$ has two distinct eigenspaces we can still say more.  There must exist $\lambda, \mu\in \overline{\F}_q^*$ such that $g=\mm{\lambda n_{12}}{\mu n_{12}}{\lambda(b-n_{11})}{\mu(b^{-1}-n_{11})}$ where $N=(n_{ij}).$  This follows by simply observing that the columns are in fact eigenvectors 
for $N$.  For instance, $N(n_{12},b-n_{11})^\dagger=(bn_{12}, n_{22}b-1)^\dagger=(bn_{12},b^2-bn_{11})^\dagger=b(n_{12},b-n_{11})^\dagger,$ since $b^2-(n_{11}+n_{22})b+1=0$.  Also note that $n_{12}\not=0$, since otherwise $N$ has an eigenvalue, namely $n_{22}$, in $\F_q$, which it does not.

We now show that $M=gAg^{-1}$ cannot be in $\SL_2(\F_q)$; which is a contradiction.  We compute $M=g\mm{a}{x}{0}{a^{-1}}g^{-1}$ which equals
\begin{equation*}\left(
\begin{array}{cc}
 \frac{\lambda  \left(a^2 \mu 
   \left(\frac{1}{b}-n_{11}\right)-(a x \lambda +\mu )
   \left(b-n_{11}\right)\right)
   n_{12}}{a} & \frac{\lambda  n_{12}^2
   \left(-\mu  a^2+x \lambda 
   a+\mu \right)}{a} \\
 -\frac{\lambda 
   \left(b-n_{11}\right)
   \left(-\mu  a^2+b^2 x \lambda
    a+\mu +b \left(\mu  a^2-x
   \lambda  a-\mu \right)
   n_{11}\right)}{a b} &
   \frac{\lambda  \left(a x
   \lambda  b^2-a^2 \mu 
   b^2+\left(\mu  a^2-x \lambda 
   a-\mu \right) n_{11} b+\mu
   \right) n_{12}}{a b} \\
\end{array}
\right),\end{equation*} where $x=0,1$.  Using the equation $\det(g)=\lambda  \mu  n_{12}(b^{-1}-b)=1$, the upper right entry of $M=(m_{ij})$ simplifies to $m_{12}=n_{12}(a^{-1}-a)(b^{-1}-b)^{-1}+n_{12}^2x\lambda^2.$  However, $(b^{-1}-b)^{-1}=-\sqrt{t^2-4}/(t^2-4)$ is not in $\F_q$ since by assumption $2b=t+\sqrt{t^2-4}$ is not in $\F_q$ where $t=n_{11}+n_{22}\in \F_q$.  Thus, since $n_{12}(a^{-1}-a)\in \F_q^*$, we conclude that the upper right entry of $M$ is not in $\F_q$ if $x=0$, or if $x=1$ and either $\lambda^2\in\F_q$ or $\lambda^2\in\overline{\F}_q-\F_q(\sqrt{t^2-4})$.  Consequently, $M$ is not in $\SL_2(\F_q)$ in these cases; the desired contradiction.  If $x=1$ and $\lambda^2\in \F_q(\sqrt{t^2-4})-\F_q$, then $0=\Im(m_{12})=-n_{12}(a^{-1}-a)/(t^2-4)+n_{12}^2\Im(\lambda^2)$ if and only if $\Im(\lambda^2)=\frac{a^{-1}-a}{n_{12}(t^2-4)}$.  So for $M$ to be in $\SL_2(\F_q)$ the latter condition must hold.  Using this, we simplify $m_{21}$, and solve for $\Im(m_{21})=0$.  In this way, after a fairly 
lengthy calculation, we obtain that $\Re(\lambda^2)=\frac{a^{-1}-a}{n_{12}(2n_{11}-t)}$.  Note that since $a\not=\pm 1$, it must be the case that $2n_{11}-t=n_{11}-n_{22}\not=0$.  Thereafter, we substitute these necessary values for $\Re(\lambda^2)$ and $\Im(\lambda^2)$ into $m_{22}$.  Simplifying, again after a lengthy calculation, we obtain that $\Im(m_{22})=\frac{a^{-1}-a}{2(2n_{11}-t)}$.  But this latter expression is never $0$ since $a\not=\pm 1$.  This last contradiction finishes the proof of (1).

We now prove item (2).  Again we assume the $\rho=(A_1,...,A_r)$ is upper-triangular.  By (1), for each $i$ either $A_i$ is a multiple of the identity matrix, or the eigenvalues of $A_i$ are in $\F_q[x]/(x^2-tx+1)-\F_q$ where $\mathrm{tr}(A_i)=t$. Suppose that $\rho$ is not diagonalizable over $\overline{\F}_q$, yet has eigenvalues outside of $\F_q$.  Therefore, similar to the proof of (1), we can assume there exists distinct indices $i,j$ so $A_i=\mm{a}{1}{0}{a^{-1}}$ and $A_j=\mm{b}{0}{0}{b^{-1}}$ where $a,b\in\overline{\F}_q-\F_q$. 

By hypothesis, $\rho$ may be conjugated to a representation in $\SL_2(\F_q)$, and so there exists a $g\in \SL_2(\overline{\F}_q)$ so that $g\rho g^{-1} \in \SL_2(\F_q)$.
As in the proof of (1), we know there exists $\mu,\lambda\in \overline{\F}_q$ such that $g=\mm{\lambda n_{12}}{\mu n_{12}}{\lambda(b-n_{11})}{\mu(b^{-1}-n_{11})}$ where $N=(n_{ij})=gA_jg^{-1}$, and $\det(g)=\lambda\mu n_{12} \left(b^{-1}-b\right)=1$. We show that $M=gA_ig^{-1}$ cannot be in $\SL_2(\F_q)$; which is a contradiction.  Similarly to the calculation in the proof of (1), $M=(m_{ij})$ equals
\begin{equation*}\left(
\begin{array}{cc}
\frac{\lambda  n_{12} \left(a^2\mu \left(\frac{1}{b}-n_{11}\right)-\left(b-n_{11}\right) (a\lambda +\mu )\right)}{a} &
\frac{\lambda  n_{12}^2\left(-\mu a^2 +a \lambda +\mu \right)}{a} \\-\frac{\lambda \left(b-n_{11}\right) \left(b n_{11} \left(a^2 \mu -a
\lambda -\mu \right)-\mu a^2
+a b^2 \lambda +\mu
   \right)}{a b} & \frac{\lambda
    n_{12} \left(-a^2 b^2 \mu +b
   n_{11} \left(a^2 \mu -a
   \lambda -\mu \right)+a b^2
   \lambda +\mu \right)}{a b} \\
\end{array}
\right).\end{equation*}

By assumption $a=s/2+f/2$ where $f=\sqrt{s^2-4}$ and $s^2-4$ is a quadratic non-residue, and $s=\mathrm{tr}(A_i)$.  Likewise, $b=t/2+e/2$ where $e=\sqrt{t^2-4}$ and $t^2-4$ is a quadratic non-residue, and $t=\mathrm{tr}(A_j)$.  Simplifying $m_{12}$ with these values we determine that $m_{12}=\frac{e f n_{12}}{t^2-4}+\lambda^2 n_{12}^2$.  Since $e^2$ and $f^2$ are quadratic non-residues, the fact that the Legendre symbol is multiplicative (by Lemma \ref{residuelemma}) implies that $(ef)^2$ is a quadratic residue and therefore, $ef$ is in $\F_q$.  We thus conclude that $\lambda^2$ must also be in $\F_q^*$.  With that acknowledged, we now likewise simplify $m_{11}$ obtaining $$\frac{e f \left(2n_{11}-t\right)}{2
\left(t^2-4\right)}-\frac{1}{2} e \lambda ^2 n_{12}+\frac{\lambda ^2 n_{12} \left(2 n_{11} t^2-8 n_{11}-t^3+4 t\right)}{2\left(t^2-4\right)}+\frac{s}{2}.$$  Again, since $ef\in \F_q$ and $\lambda^2,n_{12}\not=0$ but are in $\F_q$, we deduce that $m_{11}\not\in \F_q$, the desired contradiction.  Therefore, we have shown that if $\rho$ is reducible and any component matrix has an eigenvalue not in $\F_q$, then $\rho\in \strata_{\overline{D}}$.  

We now prove the converse.  Since $\rho$ is diagonalizable, we assume that $\rho=(D_1,...,D_r)$ where each $D_i=\mm{d_i}{0}{0}{d_i^{-1}}$  and $d_i = \pm 1$, or $d_i^2-t_i d_i+1=0$, $t_i\in \F_q$ and $d_i \in \overline{\F}_q-\F_q$.  If $d_i=\pm 1$ for all $i$, then the result holds trivially.  So we assume there exists $i_0$ so $d_{i_0}\notin\F_q$.  

When $d_i\not=\pm 1$, then $d_i^{\pm 1}=\frac{t_i \pm \sqrt{t_i^2-4}}{2}=t_i/2\pm e_i/2$ where $e_i^2=t_i^2-4$ is a quadratic non-residue.  Thus, $\tr(D_i)=d_i+d^{-1}_i=t_i\in \F_q$.  Moreover, it is easy to show that $d_i=d_{j}^{\pm 1}$ if and only if $t_i=t_j$. 

There exists an $g_{i_0}\in \SL_2(\overline{\F}_q)$ so that $B_{i_0}=g_{i_0}D_{i_0}g_{i_0}^{-1}\in \SL_2(\F_q)$.  For instance, letting $g_{i_0}=\mm{1}{1}{d_{i_0}}{d_{i_0}^{-1}}$, we see that $B_{i_0}:=g_{i_0}D_{i_0}g_{i_0}^{-1}=\mm{0}{1}{-1}{t_{i_0}}.$  We claim that $g_{i_0}D_ig_{i_0}^{-1}\in\SL_2(\F_q)$ for all $1\leq i \leq r$.  Supposing that is the case and letting $g:=\det(g_{i_0})^{-1/2}g_{i_0}\in \SL_2(\overline{\F}_q)$, we have $g\rho g^{-1}=(g_{i_0}D_1g_{i_0}^{-1},...,g_{i_0}D_rg_{i_0}^{-1})\in\SL_2(\F_q)^{\times r}$, as desired.

Indeed, if $d_i=\pm 1$, then $g_{i_0}D_{i}g_{i_0}^{-1}=D_i \in \SL_2(\F_q)$.  If $d_i=d_{i_0}^{\pm 1}$, then either $g_{i_0}D_{i}g_{i_0}^{-1}=B_{i_0}\in \SL_2(\F_q)$, or $g_{i_0}D_i g_{i_0}^{-1}=g_{i_0}D_{i_0}^{-1}g_{i_0}^{-1}=B_{i_0}^{-1}\in\SL_2(\F_q)$. 
 
Otherwise, for each $i$ let $g_{i}=\mm{1}{1}{d_{i}}{d_{i}^{-1}}$ so that $B_i:=g_iD_ig_i^{-1}=\mm{0}{1}{-1}{t_{i}}\in \SL_2(\F_q)$.  Thus, $D_i=g_i^{-1}B_ig_i$ and so $g_{i_0}D_ig_{i_0}^{-1}=(g_{i_0}g_i^{-1})B_i(g_{i_0}g_i^{-1})^{-1}$.  It suffices only to prove $g_{i_0}g_i^{-1}\in \SL_2(\F_q)$.  Indeed, simplifying we obtain $g_{i_0}g_i^{-1}=\left(
\begin{array}{cc}
 1 & 0 \\
 \frac{t_{i_0}}{2}-\frac{e_{i_0} t_i}{2
   e_i} & \frac{e_{i_0}}{e_i} \\
\end{array}
\right).$  However, since $e_{i}^2$ is a quadratic non-residue, we conclude that $1/e_{i}^2$ is also a quadratic non-residue.  Again, since the Legendre symbol is multiplicative (Lemma \ref{residuelemma}) and $e_{i_0}^2$ is also a quadratic non-residue, we conclude that $(\frac{e_{i_0}}{e_i})^2$ is quadratic residue which implies that $\frac{e_{i_0}}{e_i}\in \F_q$.  Thus, $g_{i_0}g_i^{-1}\in \SL_2(\F_q)$ as needed.

Now we prove (3).  By Lemma \ref{basefieldreductionlemma}, if $\rho=(A_1,...,A_r)\in \strata_{\overline{D}}$ then some $A_i$ has an eigenvalue in $\overline{\F}_q-\F_q$.  Thus, by assumption, $\rho\not\in\strata_{\overline{D}}$.

We now prove that if all eigenvalues are $\pm 1$, then $\rho$ is in $\strata_Z\cup \strata_U$.  Assume that $\rho=(A_1,...,A_r)$ is not central.   By definition, there exists $g\in\SL_2(\overline{\F}_q)$ so that $g^{-1}\rho g$ is upper-triangular.  We will show that $g$ can be chosen from $\SL_2(\F_q)$.  Denote $A_{i}=\mm{a_{i}}{b_{i}}{c_{i}}{d_{i}}\in \SL_2(\F_q)$.  Then we can assume that $(b_{i},\epsilon_i-a_{i})^\dagger$ is an eigenvector for any non-central $A_i$, where $\epsilon_i=\pm 1$, is its eigenvalue.  Note that if $b_i=0$, then either $A_i$ is central or its only eigenspace is spanned by $(0,1)^\dagger$.  Since all the $A_i$'s have a common eigenvector, $\rho$ much be lower-triangular to begin with; we can thus act with the matrix $\mm{0}{-1}{1}{0}$ to make $\rho$ upper-triangular over $\F_q$ (likewise if one non-central component has $c_i=0$ then $\rho$ is upper-triangular to begin with).  We now assume that either each $A_i$ is $\pm \id$ or $b_i\not=0$ (with at least one such upper-right 
component, say $b_{i_0}$, non-zero).  We claim that $g=\mm{b_{i_0}}{x}{\epsilon_{i_0}-a_{i_0}}{y}$ for any $x,y\in\F_q$ such that $\det(g)=1$, like $y=b_{i_0}^{-1}(1+x(\epsilon_{i_0}-a_{i+0}))$, will make $g^{-1}A_{i}g$ upper-triangular for all $1\leq i\leq r$.  Obviously this holds for any central $A_i$, so we need only show that this holds for non-central components.  Since $A_1,...,A_r$ share exactly one common eigenspace, and since $(b_{i_0},\pm1-a_{i_0})^\dagger$ is an eigenvector for $A_{i_0}$, there is $\lambda_{i}\not=0$ so for any non-central $A_i$ we have $\lambda_i(b_i,\epsilon_i-a_i)^\dagger=(b_{i_0},\epsilon_{i_0}-a_{i_0})^\dagger$.  Thus computing the lower-left component in $g^{-1}A_ig$, with this substitution made for the first column of $g$, we obtain $-b_i \lambda _i^2\left(\epsilon _i^2- \left(a_i+d_i\right)\epsilon _i+1\right)$ which is $0$ since $\epsilon_i$ is an eigenvalue.  Thus we have shown that we can upper-triangulize any reducible $\rho$ whose components all have eigenvalues $\pm 
1$ over the base field $\F_q$ (Lemma \ref{basefieldreductionlemma} shows that this fact generalizes to $\strata_{\NR}$).

Now we can assume, since $\rho$ is reducible, that each $A_i$ is already upper-triangular and at least one such factor does not have eigenvalues $\pm 1$.  Indeed, let $A_{i_1}=\left(\begin{array}{cc}a&x\\0&a^{-1}\end{array}\right)$ with $a\not=\pm 1$.  Since $a\not=\pm 1$, conjugating by $\left(\begin{array}{cc}1& \frac{x}{a-a^{-1}}\\0&1\end{array}\right)$ we can assume that $x=0$.  At this point, $\rho\in \strata_D$ or it is not.  If it is not, then there is $A_{i_2}=\left(\begin{array}{cc}b&y\\0&b^{-1}\end{array}\right)$ with $y\not=0$.  Notice that $A_{i_1}A_{i_2}-A_{i_2}A_{i_1}$ is the zero matrix if and only if $ay=y/a$, which itself occurs only if $y=0$ or $a=\pm 1$.  Neither holds by construction.  Therefore, $A_{i_1}$ and $A_{i_2}$ do not commute and thus $\rho(\fr_r)$ is non-Abelian.

\end{proof}

\section{Uniform Action on Strata}

The point of this section is to prove that $\SL_2(\F_q)$ acts uniformly on each stratum defined in Definition \ref{stratadef}.  This will finish the proof of Proposition \ref{strataprop}. 

The following elementary proposition will be used repeatedly.

\begin{prop}
\label{stabprop}Let $G$ and $\Gamma$ be groups, and let $G$ act on $\hm(\Gamma, G)$ by conjugation. Then for any  $\rho\in\hm(\Gamma, G)$ and  $g\in G$, the map $\phi_g:\stab_G(\rho)\to \stab_G(g\rho g^{-1})$ defined by $h\mapsto ghg^{-1}$ is a bijection.
\end{prop}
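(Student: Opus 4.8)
The plan is to show $\phi_g$ is a well-defined group homomorphism with a two-sided inverse, namely $\phi_{g^{-1}}$. I expect no real obstacle here: the statement is a formal consequence of the fact that conjugation by a fixed $g\in G$ is an automorphism of $G$, together with the definition of the conjugation action on $\hm(\Gamma,G)$.

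First I would check that $\phi_g$ actually lands in $\stab_G(g\rho g^{-1})$. Recall that $h\in\stab_G(\rho)$ means $h\rho(\gamma)h^{-1}=\rho(\gamma)$ for every $\gamma\in\Gamma$. Then for $h'=ghg^{-1}$ one computes, for each $\gamma$,
$$
h'\,\bigl(g\rho(\gamma)g^{-1}\bigr)\,h'^{-1}
= g h g^{-1}\cdot g\rho(\gamma)g^{-1}\cdot g h^{-1} g^{-1}
= g\bigl(h\rho(\gamma)h^{-1}\bigr)g^{-1}
= g\rho(\gamma)g^{-1},
$$
so indeed $ghg^{-1}\in\stab_G(g\rho g^{-1})$. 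Thus $\phi_g$ is well-defined, and it is a homomorphism because it is the restriction of the inner automorphism $x\mapsto gxg^{-1}$ of $G$.

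Next I would exhibit the inverse. Applying the previous paragraph with $g$ replaced by $g^{-1}$ and $\rho$ replaced by $g\rho g^{-1}$ shows that conjugation by $g^{-1}$ gives a well-defined map $\phi_{g^{-1}}:\stab_G(g\rho g^{-1})\to\stab_G\bigl(g^{-1}(g\rho g^{-1})g\bigr)=\stab_G(\rho)$. For any $h$ one has $g^{-1}(ghg^{-1})g=h$ and $g(g^{-1}hg)g^{-1}=h$, so $\phi_{g^{-1}}\circ\phi_g=\mathrm{id}_{\stab_G(\rho)}$ and $\phi_g\circ\phi_{g^{-1}}=\mathrm{id}_{\stab_G(g\rho g^{-1})}$. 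Hence $\phi_g$ is a bijection (in fact a group isomorphism), which is what is claimed. The only point requiring any care is keeping track of which representation each stabilizer belongs to when composing $\phi_g$ with $\phi_{g^{-1}}$, but this is immediate once one notes $g^{-1}(g\rho g^{-1})g=\rho$.
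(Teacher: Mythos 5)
Your proof is correct: the computation showing $\phi_g$ lands in $\stab_G(g\rho g^{-1})$ and the identification of $\phi_{g^{-1}}$ as a two-sided inverse is exactly the standard argument the paper has in mind, which it omits as elementary. Nothing is missing, and your extra observation that $\phi_g$ is in fact a group isomorphism is a harmless strengthening of the stated bijectivity.
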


\begin{remark}\label{trivialstrata}
Obviously, since $p>2$, $|\strata_Z|= 2^r$, and $\SL_2(\F_q)$ acts trivially and thus uniformly on $\strata_Z$ (and when $p=2$, $|\strata_Z|= 1$).  
\end{remark}

\begin{lemma}\label{sdstablemma}
For all $\rho\in \strata_D$, $|\stab_G(\rho)|=q-1$.  In other words, $G$ acts on $\strata_D$ uniformly of order $q-1$.
\end{lemma}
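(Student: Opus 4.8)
The plan is to reduce to a single, canonical representative of each conjugacy class in $\strata_D$ and compute the stabilizer there, then invoke Proposition \ref{stabprop} to transport the answer to every element of the stratum. By definition of $\strata_D$, any $\rho\in\strata_D$ is conjugate (over $G=\SL_2(\F_q)$) to a representation $\rho'=(D_1,\dots,D_r)$ with each $D_i$ diagonal, and since $\rho\notin\strata_Z$ at least one $D_{i_0}$ is not $\pm\id$. Proposition \ref{stabprop} says conjugation induces a bijection of stabilizers, so it suffices to prove $|\stab_G(\rho')|=q-1$. The point is that the common diagonalization forces the stabilizer to be small.

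The key computation is this: suppose $h=\smm{a}{b}{c}{d}\in\SL_2(\F_q)$ commutes with $D_{i_0}=\smm{\alpha}{0}{0}{\alpha^{-1}}$ where $\alpha\neq\pm1$ (so $\alpha\neq\alpha^{-1}$). Writing out $hD_{i_0}=D_{i_0}h$ and comparing entries gives $b(\alpha^{-1}-\alpha)=0$ and $c(\alpha-\alpha^{-1})=0$, hence $b=c=0$; thus $h$ is itself diagonal, $h=\smm{a}{0}{0}{a^{-1}}$ for some $a\in\F_q^*$. Conversely every such diagonal $h$ commutes with all the $D_i$ (diagonal matrices commute), so $h\in\stab_G(\rho')$. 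Therefore $\stab_G(\rho')=D$, the full group of diagonal matrices in $\SL_2(\F_q)$, which has order $|\F_q^*|=q-1$. (One could also phrase the forward direction via Lemma \ref{eigenlemma}-style reasoning: a centralizing $h$ must preserve the two distinct $\F_q$-eigenlines $e_1,e_2$ of $D_{i_0}$, hence is diagonal.)

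Combining: for arbitrary $\rho\in\strata_D$, pick $g\in G$ with $g\rho g^{-1}=\rho'$ in diagonal form; then $|\stab_G(\rho)|=|\stab_G(\rho')|=q-1$ by Proposition \ref{stabprop}. Since this holds for every $\rho\in\strata_D$ with the same value $q-1$, $G$ acts uniformly of order $q-1$ on $\strata_D$ in the sense of Definition \ref{uniformactiondef}.

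I do not expect any serious obstacle here; the only mild subtlety is making sure the existence of a non-central diagonal factor $D_{i_0}$ (guaranteed precisely because $\strata_Z$ is subtracted off in the definition of $\strata_D$) is what pins the stabilizer down to $D$ rather than something larger — if all factors were $\pm\id$ the stabilizer would be all of $G$, which is exactly the central stratum excluded here. The centralizer-of-a-regular-semisimple-element computation is the crux, and it is a two-line matrix identity.
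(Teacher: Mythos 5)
Your proposal is correct and follows essentially the same route as the paper: conjugate into diagonal form, use Proposition \ref{stabprop} to transfer stabilizers, and observe that commuting with a non-central diagonal factor forces the off-diagonal entries to vanish, so the stabilizer is exactly $D$ with $|D|=q-1$. No issues.
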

\begin{proof}
Let $\rho\in\strata_D$. Then there exists $g\in G$ so that $g\rho g^{-1}\in D^{\times r}$. By Proposition \ref{stabprop}, we count $|\stab_G(g\rho g^{-1})|$.  Suppose that $g\rho g^{-1}=\left(\mm{a_1}{0}{0}{a_1^{-1}},\ldots,\mm{a_r}{0}{0}{a_r^{-1}}\right).$  Since diagonal matrices commute, $D\subset \stab_G(g\rho g^{-1})$.  Next, since $\rho\notin \strata_Z$, there exists $i$ so $a_i\not=\pm 1$. Suppose that $B=\mm{a}{b}{c}{d}\in \stab_G(g\rho g^{-1})$, then $\mm{a_i}{0}{0}{a_i^{-1}}=B\mm{a_i}{0}{0}{a_i^{-1}}B^{-1}$; implying $B$ is diagonal.  Thus, $\stab_G(g\rho g^{-1})\subset D$, and so $|\stab_G(\rho)|=|\stab_G(g\rho g^{-1})|=|D|= q-1$.
\end{proof}

\begin{lemma}\label{sustablemma}    
If $\rho\in\strata_U$, then $|\stab_G(\rho)|=2q$; that is, $G$ acts uniformly of order $2q$ on $\strata_U$.
\end{lemma}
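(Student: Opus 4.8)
The plan is to mimic the structure of the proof of Lemma \ref{sdstablemma}. Given $\rho \in \strata_U$, by definition there is $g \in G$ with $g\rho g^{-1} \in U^{\times r}$, and by Proposition \ref{stabprop} it suffices to compute $|\stab_G(g\rho g^{-1})|$, so we may as well assume from the start that $\rho = (A_1,\dots,A_r)$ with each $A_i = \smm{\epsilon_i}{a_i}{0}{\epsilon_i} \in U$, $\epsilon_i = \pm 1$. First I would identify the subgroup of $G$ that obviously stabilizes such a tuple: the group $\widetilde U := \left\{ \smm{\pm 1}{b}{0}{\pm 1} : b \in \F_q \right\}$, which has order $2q$, commutes with every element of $U$ (since $U$ is itself abelian — products of $\pm I$ with unipotents), so $\widetilde U \subseteq \stab_G(g\rho g^{-1})$. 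This gives the lower bound $|\stab_G(\rho)| \geq 2q$.

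For the reverse inclusion I would use that $\rho \notin \strata_{\overline D} \cup \strata_D \cup \strata_Z$. In particular $\rho$ is not central, so some $A_{i_0} = \smm{\epsilon_{i_0}}{a_{i_0}}{0}{\epsilon_{i_0}}$ has $a_{i_0} \neq 0$; equivalently $A_{i_0}$ is a nontrivial unipotent (up to sign). Now suppose $B = \smm{a}{b}{c}{d} \in \stab_G(g\rho g^{-1})$; then in particular $B$ commutes with $A_{i_0}$. A direct computation of $B A_{i_0} = A_{i_0} B$, using $a_{i_0} \neq 0$, forces $c = 0$ and $a = d$; since $\det B = 1$ this gives $a = d = \pm 1$, i.e. $B \in \widetilde U$. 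Hence $\stab_G(g\rho g^{-1}) \subseteq \widetilde U$, and combined with the lower bound we get $|\stab_G(\rho)| = |\widetilde U| = 2q$. This shows $G$ acts uniformly of order $2q$ on $\strata_U$.

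The one point that needs a little care — and where the argument is genuinely using the definition of $\strata_U$ rather than just "some component is a nontrivial unipotent" — is making sure the centralizer of a single nontrivial unipotent (times $\pm I$) is exactly $\widetilde U$ and not something larger; this is the elementary matrix computation above, and the sign ambiguity $\epsilon_{i_0} = \pm 1$ is harmless since $\smm{-1}{a_{i_0}}{0}{-1}$ has the same centralizer as $\smm{1}{-a_{i_0}}{0}{1}$. I expect the main (minor) obstacle to be bookkeeping the $\pm$ signs so that $\widetilde U$ really does coincide with the paper's set $U$ from Definition \ref{stratadef}(4), i.e. checking $|U| = 2q$ and that $U$ is closed under multiplication and commutes with its own elements; none of this is deep, but it should be stated cleanly so the "uniform of order $2q$" conclusion is unambiguous.
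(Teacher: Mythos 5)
Your proposal is correct and follows essentially the same route as the paper's proof: reduce to a tuple in $U^{\times r}$ via Proposition \ref{stabprop}, observe $U\subseteq\stab_G(g\rho g^{-1})$ (the paper checks the conjugation explicitly, which is the same as your abelian-ness of $U$), and then use a non-central component $A_{i_0}$ with $a_{i_0}\neq 0$ to force $c=0$ and $a=d=\pm 1$, giving $\stab_G(g\rho g^{-1})=U$ of order $2q$. No substantive differences.
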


\begin{proof}
Let $\rho=(A_1,...,A_r)\in\strata_U$. Then by definition, there exists $g\in G$ so for all $1\leq i\leq r$, $gA_ig^{-1}=\mm{\pm1}{a_i}{0}{\pm1}$ for $a_i\in \F_q$, and there exists at least one $j$ so $a_j\not=0$. Let $B=\mm{\pm1}{k}{0}{\pm1}\in U$. Then for all $i$, $$B(gA_ig^{-1})B^{-1} =\mm{\pm1}{k}{0}{\pm1}\mm{\pm1}{a_i}{0}{\pm1}\mm{\pm1}{-k}{0}{\pm1}=\mm{\pm1}{a_i}{0}{\pm1}.$$
Thus, $U\subset\stab_G(g\rho g^{-1})$. Conversely, let $B=\mm{a}{b}{c}{d}\in\stab_G(g\rho g^{-1})$. Then, $B(gA_jg^{-1})B^{-1}=gA_jg^{-1}$, and so
$$\mm{\pm1 - a_jac}{a_ja^2}{-a_jc^2}{\pm1+a_jac}=\mm{\pm1}{a_j}{0}{\pm1}.$$
Thus, since $a_j\neq 0$, we obtain that $c=0$ and $a=\pm1$. This forces $d=\pm1=a$ since $\det(B)=1$.  Thus, $B\in U$ and so $\stab_G(g\rho g^{-1})\subset U$. We have shown $\stab_G(g\rho g^{-1})=U$.  Since $|U|=2q$, Proposition \ref{stabprop} implies the result.
\end{proof}

\begin{lemma}\label{sdbarstablemma}
If $\rho\in\strata_{\overline{D}}$, then $|\stab_G(\rho)|=q+1$; that is, $G$ acts uniformly of order $q+1$ on $\strata_{\overline{D}}$.
\end{lemma}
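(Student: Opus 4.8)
The plan is to reduce $|\stab_G(\rho)|$ to the order of the centralizer in $G=\SL_2(\F_q)$ of one carefully chosen component of $\rho$, and then to recognize that centralizer as a non-split maximal torus, whose order is $q+1$.

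First I would locate a suitable component. Write $\rho=(A_1,\dots,A_r)$ with $A_i=\rho(\gamma_i)$. Since $\rho\in\strata_{\overline{D}}$ it is diagonalizable over $\overline{\F}_q$ but not over $\F_q$ (and not central), so by Lemma \ref{basefieldreductionlemma} some component $A_{i_0}$ has an eigenvalue in $\overline{\F}_q-\F_q$. By Lemma \ref{eigenlemma} both eigenvalues of $A_{i_0}$ then lie in $\F_{q^2}$, and writing them $d,d^{-1}$ they are distinct (since $d\neq\pm1$); in particular $A_{i_0}$ is non-scalar and its characteristic polynomial $x^2-\tr(A_{i_0})x+1$ is irreducible over $\F_q$.

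Next I would prove $\stab_G(\rho)=Z_G(A_{i_0}):=\{B\in G:BA_{i_0}=A_{i_0}B\}$. The inclusion $\subseteq$ is immediate. For $\supseteq$, note that since $\rho$ is simultaneously diagonalizable over $\overline{\F}_q$ the matrices $A_1,\dots,A_r$ commute pairwise, and because $A_{i_0}$ has distinct eigenvalues its centralizer in $M_2(\overline{\F}_q)$ is the $2$-dimensional commutative algebra $\overline{\F}_q[A_{i_0}]$ (conjugate $A_{i_0}$ to a diagonal matrix with distinct entries and compute). Hence every $A_i$ lies in $\overline{\F}_q[A_{i_0}]$, so any $B$ commuting with $A_{i_0}$ lies in this commutative algebra and therefore commutes with all of $\rho$. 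Thus $\stab_G(\rho)=Z_G(A_{i_0})$ for every $\rho\in\strata_{\overline{D}}$. Now compute $|Z_G(A_{i_0})|$: irreducibility of the characteristic polynomial makes $\F_q[A_{i_0}]\subset M_2(\F_q)$ a field isomorphic to $\F_{q^2}$, and a dimension count in $Z_{M_2(\overline{\F}_q)}(A_{i_0})=\overline{\F}_q[A_{i_0}]$ gives $Z_{M_2(\F_q)}(A_{i_0})=\F_q[A_{i_0}]$. Under the isomorphism $\F_q[A_{i_0}]\cong\F_{q^2}$ sending $A_{i_0}\mapsto d$, the determinant of $xI+yA_{i_0}$ equals $(x+yd)(x+yd^{-1})=N_{\F_{q^2}/\F_q}(x+yd)$ (using $d^q=d^{-1}$), so $Z_G(A_{i_0})=\{u\in\F_{q^2}^{\times}:N_{\F_{q^2}/\F_q}(u)=1\}$, a group of order $(q^2-1)/(q-1)=q+1$ since the norm map is surjective. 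This gives $|\stab_G(\rho)|=q+1$ for every $\rho\in\strata_{\overline{D}}$, i.e., $G$ acts uniformly of order $q+1$, as claimed (consistent with Proposition \ref{stabprop}).

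The main obstacle is not a computation but making the reduction $\stab_G(\rho)=Z_G(A_{i_0})$ honest: it rests on invoking Lemma \ref{basefieldreductionlemma} (equivalently the No-Mixing Theorem, Proposition \ref{nomixing}) to guarantee a component with irreducible characteristic polynomial over $\F_q$, together with the standard identification of the centralizer of a regular $2\times2$ matrix with the algebra it generates. Once that is in place, the order count is routine finite-field arithmetic.
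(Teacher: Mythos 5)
Your proposal is correct, and it takes a genuinely different route from the paper. The paper conjugates $\rho$ into $\overline{D}^{\times r}$ over $\overline{\F}_q$, observes (via the proof of Lemma \ref{sdstablemma}) that the stabilizer there is $\overline{D}$, and then counts which elements of $\overline{D}$ are carried back into $\SL_2(\F_q)$ by the conjugating matrix: items (1) and (2) of the No-Mixing Theorem show these are exactly $\pm\id$ together with the diagonal matrices whose eigenvalues satisfy an irreducible $x^2-tx+1$ over $\F_q$, and a count of such irreducible quadratics ($(q-1)/2$ of them, two eigenvalues each) gives $(q-1)+2=q+1$. You instead never leave the base field: you reduce $\stab_G(\rho)$ to the centralizer of one regular component $A_{i_0}$ with irreducible characteristic polynomial (using that simultaneous diagonalizability forces all components into the commutative algebra $\overline{\F}_q[A_{i_0}]$), identify $Z_{M_2(\F_q)}(A_{i_0})=\F_q[A_{i_0}]\cong\F_{q^2}$, and recognize the determinant as the norm, so the stabilizer is the norm-one subgroup of $\F_{q^2}^{\times}$, of order $q+1$. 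Your argument buys a cleaner structural statement (the stabilizer is a non-split maximal torus, not just a set of size $q+1$) and sidesteps the delicate conjugation bookkeeping and the counting of irreducible polynomials, at the cost of leaning on the centralizer-equals-generated-algebra fact for regular $2\times 2$ matrices and surjectivity of the norm map; the paper's count of irreducible quadratics is still needed elsewhere (Proposition \ref{stratasdbarcount}), so it is not wasted there. One small point to make fully explicit in your first step: when every component has all eigenvalues in $\F_q$, Lemma \ref{basefieldreductionlemma} applies only if some component has distinct eigenvalues; in the remaining case all components are diagonalizable with eigenvalues $\pm1$, hence equal to $\pm\id$, so $\rho$ would be central — a one-line case you should record, exactly as the paper implicitly does when proving part (3) of the No-Mixing Theorem.
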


\begin{proof}
We first prove that there are exactly $(q-1)/2$ values of $t$ in $\F_q$ so that $x^2-tx+1$ is irreducible in $\F_q[x]$.  Any solution to $x^2-tx+1=0$ in $\overline{\F}_q$, by the quadratic formula, has the form $x=\frac{t\pm\sqrt{t^2-4}}{2}.$  So $x^2-tx+1$ is irreducible if and only if $t^2-4$ is a quadratic non-residue; that is, $y^2=t^2-4$ does not have a solution in $\F_q$.  This equation is equivalent to $t^2-y^2 = 4$ and thus $\left(\frac{t}{2}\right)^2-\left(\frac{y}{2}\right)^2= 1$ since $p$ is odd.

But the variety $z^2-w^2=1$ is isomorphic to the variety $uv=1$ via $u=z+w$ and $v=z-w$.  However, $uv=1$ is isomorphic to $\GL_1(\F_q)$ and thus has $q-1$ solutions.

So there exists $q-1$ pairs $(t/2,y/2)$ of solutions. This implies that for all $t/2\neq \pm 1$ there exists two values of $y$ (namely $\pm y$), and if $t/2=\pm 1$ then $y=0$. Consequently, there are $\frac{(q-1)-2}{2}+2=\frac{q-3+4}{2}=\frac{q+1}{2}$ choices for $t/2$. This implies that there are $(q+1)/2$ choices for $t$ that yield a reducible polynomial $x^2-tx+1$, and $q-(q+1)/2=(q-1)/2$ choices that do not.

Let $\rho\in \strata_{\overline{D}}$.  By definition, there exists $g\in \SL_2(\overline{\F}_q)$ such that $g\rho g^{-1}\in \overline{D}^{\times r}$ and by the No-Mixing Theorem (Proposition \ref{nomixing}) we know that $g\rho g^{-1}\notin D^{\times r}$.  Let $g\rho g^{-1}=(A_1,...,A_r)$.  Thus each $A_i$ is diagonal with eigenvalues either $\pm 1$ or in $\F_q[x]/(x^2-tx+1)$ where $x^2-tx+1$ is irreducible; but $g\rho g^{-1}$ is not central.  From the proof of Lemma \ref{sdstablemma} the only elements that stabilize such a representation are $\overline{D}$.  However, we must determine the number of elements in $\SL_2(\F_q)$ that stabilize $\rho$.  Observe that $B\in\SL_2(\F_q)$ stabilizes $\rho$ if and only if $gBg^{-1}$ stabilizes $g\rho g^{-1}$.  Thus we must count the number of elements in $\overline{D}$ that are conjugate to elements in $\SL_2(\F_q)$ via $g$.  Since there are $(q-1)/2$ irreducible polynomials and each gives exactly two distinct eigenvalues, there are $2(q-1)/2=q-1$ such diagonal matrices 
whose eigenvalues are not in $\F_q$ yet are conjugate to an element in $\SL_2(\F_q)$.  Note that by the No-Mixing Theorem (item (2)) that $g$ does conjugate each of these diagonal matrices to $\SL_2(\F_q)$, and furthermore (by item (1)) if $gBg^{-1}$ is not $\pm \id$ but in $D$, then $B$ cannot be in $\SL_2(\F_q)$.  Thus, we need only add in $\pm \id$ from $D$ to the $q-1$ diagonal elements that come from $\overline{D}-D$, and so there are $(q-1)+2=q+1$ elements in $\stab_G(\rho)$. 
\end{proof}

\begin{lemma}\label{snrstablemma}
If $\rho\in\strata_{\NR}$, then $|\stab_G(\rho)|=2$.
\end{lemma}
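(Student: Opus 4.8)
The plan is to use conjugation-invariance of the order of the stabilizer (Proposition~\ref{stabprop}) to replace $\rho$ by a convenient normal form, after which the claim reduces to a short matrix computation.

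First I would record what membership in $\strata_{\NR}$ entails. Since $\strata_{\NR}\subset\strata_N$, the representation $\rho$ is non-Abelian and reducible over $\overline{\F}_q$, and it avoids $\strata_Z$, $\strata_D$, $\strata_U$ and $\strata_{\overline{D}}$. Because $\rho\notin\strata_{\overline{D}}$, the No-Mixing Theorem (Proposition~\ref{nomixing}(2), contrapositive) forces every $\rho(\gamma_i)$ to have both eigenvalues in $\F_q$; and because $\rho\notin\strata_Z\cup\strata_U$, the argument inside the proof of Proposition~\ref{nomixing}(3) (``if all eigenvalues are $\pm1$ then $\rho\in\strata_Z\cup\strata_U$'') shows some $\rho(\gamma_i)$ has an eigenvalue $\neq\pm1$, hence two distinct eigenvalues. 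Lemma~\ref{basefieldreductionlemma} therefore applies, so $\rho$ is upper-triangularizable over $\F_q$; moreover $\rho$ is not simultaneously diagonalizable over $\F_q$, since otherwise $\rho\in\strata_D\cup\strata_Z$.

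Next I would put $\rho$ into the normal form used at the end of the proof of Proposition~\ref{nomixing}(3): conjugating by an element of $G=\SL_2(\F_q)$, write $\rho=(A_1,\dots,A_r)$ with $A_i=\smm{a_i}{b_i}{0}{a_i^{-1}}$; choose $i_1$ with $a_{i_1}=:a\neq\pm1$, and conjugate once more by the $\F_q$-matrix $\smm{1}{b_{i_1}/(a-a^{-1})}{0}{1}$ (legitimate since $a-a^{-1}\neq0$) so that $A_{i_1}=\smm{a}{0}{0}{a^{-1}}$ while every $A_i$ stays upper-triangular. Since $\rho$ is not simultaneously diagonalizable, some component is not diagonal, say $A_{i_2}=\smm{b}{y}{0}{b^{-1}}$ with $y\neq0$. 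By Proposition~\ref{stabprop} it suffices to compute $\stab_G(\rho)$ for $\rho$ in this normal form. As in the proof of Lemma~\ref{sdstablemma}, any $B\in G$ commuting with $A_{i_1}$ (a diagonal matrix with distinct eigenvalues) must itself be diagonal, $B=\smm{s}{0}{0}{s^{-1}}$; then $BA_{i_2}B^{-1}=\smm{b}{s^2y}{0}{b^{-1}}$, so $BA_{i_2}B^{-1}=A_{i_2}$ forces $s^2y=y$, hence $s^2=1$, hence $s=\pm1$ because $y\neq0$. Thus $\stab_G(\rho)=\{\pm\id\}$, of order $2$ since $p$ is odd.

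I do not expect a genuine obstacle here. The only points needing care are (i) that both reductions — triangularization via Lemma~\ref{basefieldreductionlemma} and clearing the off-diagonal entry of $A_{i_1}$ — can be realized by conjugating with matrices over $\F_q$, not merely over $\overline{\F}_q$, and (ii) that the hypotheses of Lemma~\ref{basefieldreductionlemma} are exactly what the No-Mixing Theorem delivers for $\rho\in\strata_{\NR}$. Everything past the reduction is the two-line computation above.
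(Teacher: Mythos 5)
Your proposal is correct and follows essentially the same route as the paper: use the No-Mixing Theorem to force all eigenvalues into $\F_q$, exclude the all-$\pm1$ case, reduce (via conjugation over $\F_q$) to an upper-triangular form with one diagonal component having distinct eigenvalues and one non-diagonal component, and then the two-line commutation computation gives $\stab_G(\rho)=\{\pm\id\}$. Your explicit appeal to Lemma~\ref{basefieldreductionlemma} to justify triangularization over the base field is exactly the justification the paper leaves implicit (it is noted parenthetically in the proof of Proposition~\ref{nomixing}(3)), so there is no substantive difference.
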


\begin{proof}
Suppose that $\rho\in\strata_{\NR}$, that is, reducible and non-Abelian.  By Proposition \ref{stabprop}, we can assume that $\rho=(A_1,...,A_r)$ has already been put into upper-triangular form. By the No-Mixing Theorem the eigenvalues of any $A_i$ are in $\F_q$.  If all such eigenvalues are $\pm 1$, then $\rho$ is Abelian and hence not in $\strata_{\NR}$.  Thus there is some $A_{i_1}=\left(\begin{array}{cc}a&x\\0&a^{-1}\end{array}\right)$ with $a\not=\pm 1$.  Since $a\not=\pm 1$, conjugating by $\left(\begin{array}{cc}1& \frac{x}{a-a^{-1}}\\0&1\end{array}\right)$ we can assume that $x=0$.  At this point, since $\rho\not\in \strata_D$ there is $A_{i_2}=\left(\begin{array}{cc}b&y\\0&b^{-1}\end{array}\right)$ with $y\not=0$.  We have already seen that the elements that stabilize $A_{i_1}$ are diagonal.  So $\{\pm \id\}\subset \stab_G(\rho)\subset \overline{D}$.  Now take $C=\mm{c}{0}{0}{c^{-1}}\in\stab_{G}(\rho)$.  Therefore, $A_{i_2}=CA_{i_2}C^{-1}.$  This implies that $$\mm{b}{y}{0}{b^{-1}}=\mm{c}{0}{0}{c^{-1}
}\mm{b}{y}{0}{b^{-1}}\mm{c^{-1}}{0}{0}{c}=\mm{b}{c^2y}{0}{b^{-1}},$$ which implies that $c^2y=y$, or $c=\pm 1$ since $y\not=0$. Thus, $C\in Z$ and so $\stab_G(\rho)=Z$, as required.
\end{proof}

\begin{remark}
Lemma \ref{snrstablemma} finishes the argument in Proposition \ref{strataprop} which shows that $\SL_2(\F_q)$ acts uniformly of order 2 on $\strata_{N}$, which together with Remark \ref{trivialstrata} and Lemmata \ref{sdstablemma}, \ref{sustablemma}, \ref{sdbarstablemma} finish the proof of part (b) of Proposition \ref{strataprop} which says that $\SL_2(\F_q)$ acts uniformly on each of the subsets defined in Definition \ref{stratadef}.
\end{remark}

\section{Counting Strata and Orbits}
Recall our convention that $G$ denotes $\SL_2(\F_q)$ with $q=p^k$, $p$ an odd prime.  In this section we will count the number of points as a polynomial in $q$ (for every $r$) in each stratum defined in Definition \ref{stratadef}, and thereby likewise determine the number of orbits in $\QS_r(G)$ (proving Theorem \ref{theorema}), by Propositions \ref{uniformactionthm} and \ref{strataprop}.  

By Remark \ref{trivialstrata} and the fact that $G$ acts trivially on $\strata_Z$, $|\strata_Z|=|\strata_Z/G|=2^r$.  We next address the diagonalizable stratum.

Let $W=\left\{\mm{1}{0}{0}{1},\mm{-1}{0}{0}{-1},\mm{0}{-1}{1}{0},\mm{0}{1}{-1}{0}\right\}$ be the Weyl group in $\SL_2(\F_q)$, and note that it acts on $D^{\times r}$ by simultaneously permuting the diagonal entries.

\begin{prop}\label{stratasdcount}
$|\strata_D/G|=\frac{(q-1)^r-2^r}{2}$ and $|\strata_D|=\frac{(q-1)^r-2^r}{2}q(q+1)$
\end{prop}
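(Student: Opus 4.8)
The plan is to count the diagonalizable representations by first counting the orbits and then using the Uniform Action Theorem (Proposition \ref{uniformactionthm}) together with the stabilizer computation in Lemma \ref{sdstablemma} to recover the total cardinality. For the orbit count, the key observation is that every $\rho \in \strata_D$ is conjugate to some $\rho' \in D^{\times r}$, so there is a surjection from $D^{\times r} \setminus (\strata_Z \cap D^{\times r})$ onto $\strata_D/G$. The set of $r$-tuples of diagonal matrices in $G$ that are \emph{not} central is in bijection with $(\F_q^*)^{\times r}$ minus the $2^r$ tuples all of whose entries are $\pm 1$, hence has cardinality $(q-1)^r - 2^r$. The remaining point is to determine the fibers of this surjection, i.e. when two non-central tuples in $D^{\times r}$ are $G$-conjugate.

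The main step is thus: two tuples $(\mathrm{diag}(a_1,a_1^{-1}),\dots,\mathrm{diag}(a_r,a_r^{-1}))$ and $(\mathrm{diag}(a_1',a_1'^{\,-1}),\dots)$ in $D^{\times r}$ are conjugate in $G$ if and only if they lie in the same orbit of the Weyl group $W$ acting by simultaneously swapping the two diagonal entries. One inclusion is clear since $W \subset G$. For the converse, if $g \in G$ conjugates one tuple to the other, then since the tuple is non-central there is an index $i$ with $a_i \neq \pm 1$; the computation in the proof of Lemma \ref{sdstablemma} shows that any element conjugating a diagonal matrix with distinct eigenvalues to another diagonal matrix must be either diagonal (fixing each entry) or anti-diagonal (swapping the entries in every coordinate simultaneously), and such elements, after rescaling to have determinant $1$, lie in $D \cup (\text{anti-diagonal elements})$, whose image in the quotient is exactly $W$. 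Hence the fibers of the surjection $D^{\times r}\setminus\strata_Z \to \strata_D/G$ are the $W$-orbits.

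Next I would check that $W$ acts \emph{freely} on the set of non-central tuples: if a non-central tuple is fixed by the swap $\mathrm{diag}(a_j,a_j^{-1}) \mapsto \mathrm{diag}(a_j^{-1},a_j)$ for all $j$, then $a_j = a_j^{-1}$, i.e. $a_j = \pm 1$, for every $j$, contradicting non-centrality. (The $\pm\id$ part of $W$ acts trivially on all of $D^{\times r}$, so the relevant acting group is really $W/\{\pm\id\} \cong \Z/2$, which acts freely on non-central tuples.) Therefore each fiber has size $2$, giving
$$|\strata_D/G| = \frac{(q-1)^r - 2^r}{2}.$$
Finally, applying Proposition \ref{uniformactionthm} with $m = q-1$ (Lemma \ref{sdstablemma}) and $|G| = |\SL_2(\F_q)| = q(q^2-1) = q(q-1)(q+1)$ (Lemma \ref{grouplemma}),
$$|\strata_D| = \frac{|G|}{m}\,|\strata_D/G| = \frac{q(q-1)(q+1)}{q-1}\cdot\frac{(q-1)^r-2^r}{2} = \frac{(q-1)^r - 2^r}{2}\,q(q+1),$$
as claimed. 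The only genuinely delicate point is the classification of conjugating elements in the previous paragraph — ruling out that some $g \notin D \cup W$ could conjugate one non-central diagonal tuple to another — but this is handled by the distinct-eigenvalue argument already used in Lemma \ref{sdstablemma} applied coordinatewise at the index $i$ where $a_i \neq \pm 1$, noting that the conjugating element is the same $g$ in every coordinate.
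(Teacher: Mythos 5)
Your proposal is correct and follows essentially the same route as the paper: both reduce the orbit count to the Weyl group action on non-central diagonal tuples (using the distinct-eigenvalue argument to show any conjugating element is diagonal or anti-diagonal, hence acts through $W$), and both then apply Proposition \ref{uniformactionthm} with Lemma \ref{sdstablemma} to recover $|\strata_D|$. The only cosmetic difference is that you phrase the count of $W$-orbits via the free action of $W/\{\pm\id\}\cong\Z/2$, whereas the paper says $W$ acts uniformly of order $2$; these are the same computation.
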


\begin{proof}
Consider the following commutative diagram, where $\varphi=\pi\circ \iota$ by definition: $$\xymatrix{(D^{\times r}-\strata_Z)/ W \ar@{^{(}->}[rr]^{\iota} \ar[drr]_\varphi & & \strata_D/W \ar@{->>}[d]^{\pi}\\ & & \strata_D/ G}$$  We first prove that $\varphi$ is bijective.  Suppose $\varphi([\rho_1]_W)=[\rho_1]_G=\left[\rho_2\right]_G=\varphi([\rho_2]_W)$, so there exists $x\in G$ such that $\rho_1=x\rho_2x^{-1}$. Since $\rho_1=(A_1,...,A_r),\rho_2=(B_1,...,B_r)\in D^{\times r}-\strata_Z$, there is an index $1\leq j\leq r$ so that $A_j \notin Z$. 
Then it is easy to see that $x$ preserves or swaps the two eigenvectors of $A_j$ and $B_j$, and so $x=dw$ where $d\in D$ and $w\in W$.  Then, $\rho_1=(dw)\rho_2(dw)^{-1}$ which implies $\rho_1=d^{-1}\rho_1 d=w\rho_2w^{-1}$ since $d\in D$ and $\rho_1\in D^{\times r}-\strata_Z$.  Thus, $\left[\rho_1\right]_W=\left[\rho_2\right]_W$, and $\varphi$ is injective.  To show that $\varphi$ is surjective, let $\left[\rho\right]_G\in \strata_D\frk G$. Then, there exists a $g\in G$ so that $g\rho g^{-1}\in D^{\times r}-\strata_Z$. If we consider $\left[g\rho g^{-1}\right]_W\in(D^{\times r}-\strata_Z)\frk W$, we have $\varphi\left(\left[g\rho g^{-1}\right]_W\right)=\pi\left(\iota\left(\left[g\rho g^{-1}\right]_W\right)\right)=\pi\left(\left[g\rho g^{-1}\right]_W\right)=\left[g\rho g^{-1}\right]_G=\left[\rho\right]_G$, showing $\varphi$ is surjective.  Therefore, $|\strata_D/G|=|(D^{\times r}-\strata_Z)\frk W|$.

Next, we show that $W$ acts uniformly of order $2$ on $D^{\times r}-\strata_Z$.  Since $\pm \id$ fixes all representations, the cardinality of the $W$-stabilizer of any representation in $D^{\times r}-\strata_Z$ must be greater than or equal to $2$. On the other hand, for any $\rho=(A_1,...,A_r) \in D^{\times r}-\strata_Z$, there exists any index $i$ so $A_i=\mm{a_i}{0}{0}{a_i^{-1}}\notin Z$, which implies that $a_i\neq a_i^{-1}$. The two non-central elements in $W$ do not stabilize this matrix since they permute the non-equal diagonal entries. Thus, the cardinality of the $W$-stabilizer of any element in $D^{\times r}-\strata_Z$ must be less than or equal to $2$.  Therefore, we conclude that $W$ acts uniform of order $2$.  

Any tuple $\rho\in D^{\times r}-\strata_Z$ is of the form $\rho=\left(\mm{a_1}{0}{0}{a_1^{-1}},...,\mm{a_r}{0}{0}{a_r^{-1}}\right),$
and so we have $q-1$ choices for each $a_i$ for $1\leq i \leq r$. This results in a total of $(q-1)^r$ choices for such tuples, but we must remove $\strata_Z$, which has cardinality $2^r$ by Remark \ref{trivialstrata}.  Then, by Proposition \ref{uniformactionthm}, $|(D^{\times r}-\strata_Z)\frk W|=2\frac{(q-1)^r-2^r}{|W|}=\frac{(q-1)^r-2^r}{2},$ and since $\phi$ is bijective $|\strata_D/G|=\frac{(q-1)^r-2^r}{2}$ as well.  

Lastly, Proposition \ref{uniformactionthm} and Lemma \ref{sdstablemma} give $$|\strata_D|=\frac{|G|}{q-1}|\strata_D \frk G|=\frac{q(q-1)(q+1)}{q-1}\left(\frac{(q-1)^r-2^r}{2}\right)=q(q+1)\left(\frac{(q-1)^r-2^r}{2}\right).$$
\end{proof}

\begin{prop}\label{stratasdbarcount}
$|\strata_\Dbar/G|=\frac{(q+1)^r-2^r}{2}$ and $|\strata_\Dbar|=\frac{(q+1)^r-2^r}{2}q(q-1)$
\end{prop}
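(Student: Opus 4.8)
The plan is to run the argument of Proposition~\ref{stratasdcount} with the split torus $D$ replaced by a fixed non-split maximal torus $C\subset G=\SL_2(\F_q)$ and the group $W$ replaced by the normalizer $N_G(C)$. I will use three standard facts about $\SL_2(\F_q)$ for $q$ odd, each provable by the elementary eigenvector bookkeeping already used in Lemmata~\ref{sdstablemma}--\ref{sdbarstablemma}: (i) $|C|=q+1$, and $Z_G(A)=C$ for every $A\in C-Z$ (such an $A$ being regular semisimple with eigenvalues in $\F_{q^2}-\F_q$); (ii) $N_G(C)/C\cong\Z/2\Z$, so $|N_G(C)|=2(q+1)$, the nontrivial class acting on $C$ by inversion; (iii) all non-split maximal tori of $G$ are $G$-conjugate. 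Since $N_G(C)\subset G$ is a subgroup acting on $\R_r$ by conjugation and preserving the conjugation-invariant set $\strata_\Dbar$, I form the commutative diagram
$$\xymatrix{(C^{\times r}-\strata_Z)/ N_G(C) \ar@{^{(}->}[rr]^{\iota} \ar[drr]_\varphi & & \strata_\Dbar/N_G(C) \ar@{->>}[d]^{\pi}\\ & & \strata_\Dbar/ G}$$
with $\varphi=\pi\circ\iota$; the goal is to prove $\varphi$ is a bijection.

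First, $C^{\times r}-\strata_Z\subset\strata_\Dbar$: a homomorphism with image in $C$ has abelian image consisting of semisimple elements, hence is diagonalizable over $\overline{\F}_q$; it is not central by hypothesis, and by (i) it is not in $\strata_D$ since a non-central component has eigenvalues outside $\F_q$. For injectivity of $\varphi$, suppose $x\rho_1 x^{-1}=\rho_2$ with $x\in G$ and $\rho_1,\rho_2\in C^{\times r}-\strata_Z$; pick a component $A_j$ of $\rho_1$ with $A_j\notin Z$. Then $A_j,\,xA_jx^{-1}\in C-Z$, so by (i) both have $G$-centralizer $C$, and $x\,Z_G(A_j)\,x^{-1}=Z_G(xA_jx^{-1})$ forces $x\in N_G(C)$; hence $[\rho_1]=[\rho_2]$ in the top-left quotient. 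For surjectivity, given $[\rho]_G\in\strata_\Dbar/G$, the No-Mixing Theorem (Proposition~\ref{nomixing}) forces some generator $\gamma_i$ with $\rho(\gamma_i)$ having an eigenvalue in $\overline{\F}_q-\F_q$ (otherwise item (3) would put $\rho$ outside $\strata_\Dbar$); by Lemma~\ref{eigenlemma} it lies in $\F_{q^2}-\F_q$, so $\rho(\gamma_i)$ is regular semisimple and $Z_G(\rho(\gamma_i))=:C'$ is a non-split maximal torus; the $\rho(\gamma_j)$ pairwise commute (being simultaneously diagonalizable over $\overline{\F}_q$), hence all lie in $C'$, so $\rho(\fr_r)\subset C'$. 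Conjugating $C'$ onto $C$ by (iii) realizes $[\rho]_G$ as $\varphi$ of a class in $(C^{\times r}-\strata_Z)/N_G(C)$.

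With $\varphi$ bijective, I count the top-left set. Since $C$ is abelian it acts trivially on $C^{\times r}$, so $(C^{\times r}-\strata_Z)/N_G(C)=(C^{\times r}-\strata_Z)/(N_G(C)/C)$, a $\Z/2\Z$-quotient whose nontrivial class acts by simultaneous inversion $(c_1,\dots,c_r)\mapsto(c_1^{-1},\dots,c_r^{-1})$; the fixed tuples are exactly those with each $c_i=\pm\id$, i.e.\ $\strata_Z$, so the action is free off $\strata_Z$ and $|\strata_\Dbar/G|=\frac{(q+1)^r-2^r}{2}$. Finally, Proposition~\ref{uniformactionthm} with Lemma~\ref{sdbarstablemma} (uniform action of order $q+1$ on $\strata_\Dbar$) gives $|\strata_\Dbar|=\frac{|G|}{q+1}\,|\strata_\Dbar/G|=q(q-1)\cdot\frac{(q+1)^r-2^r}{2}$.

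The main obstacle is fact (i): a non-central element of $\SL_2(\F_q)$ with eigenvalues outside $\F_q$ has $G$-centralizer equal to a single non-split maximal torus. It drives both the injectivity step and the identification of $\strata_\Dbar$, and is the ``no-mixing'' phenomenon viewed from the centralizer side. A slicker route avoiding the diagram: by (i), $\strata_\Dbar=\bigl(\bigcup_{C}C^{\times r}\bigr)-\strata_Z$ over the non-split maximal tori $C$, with distinct such tori meeting only in $Z$ (a non-central semisimple element lies in a unique maximal torus); then $|\strata_\Dbar|=\#\{\text{non-split tori}\}\cdot\bigl((q+1)^r-2^r\bigr)=\frac{q(q-1)}{2}\bigl((q+1)^r-2^r\bigr)$ since $\#\{\text{non-split tori}\}=|G|/|N_G(C)|=q(q-1)/2$, and dividing by $q(q-1)$ via Proposition~\ref{uniformactionthm} recovers $|\strata_\Dbar/G|$. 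I would adopt whichever version keeps the supporting $\SL_2(\F_q)$ lemmas shortest.
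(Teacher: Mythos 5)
Your argument is correct, but it takes a genuinely different route from the paper. You work entirely inside $\SL_2(\F_q)$ with a fixed non-split maximal torus $C$ of order $q+1$, identifying $\strata_\Dbar/G$ with $(C^{\times r}-\strata_Z)/N_G(C)$ via centralizer and normalizer structure: injectivity comes from $Z_G(A)=C$ for $A\in C-Z$, surjectivity from the fact that all components of a $\strata_\Dbar$-tuple commute with a regular component whose eigenvalues lie in $\F_{q^2}-\F_q$ (guaranteed by No-Mixing (3) and Lemma \ref{eigenlemma}), hence lie in its centralizer, which is conjugate to $C$. The paper instead diagonalizes over $\overline{\F}_q$: it introduces the set $\mathcal{E}\subset\overline{D}$ of diagonal matrices with traces in $\F_q$, produces a bijection $\strata_\Dbar/G\to(\mathcal{E}^{\times r}-\strata_Z)\frk W$, and the delicate point is injectivity, settled by an explicit computation showing $h_0^{-1}wh_0\in\SL_2(\F_q)$, with surjectivity supplied by the converse half of No-Mixing (2). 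Your $N_G(C)/C$-inversion quotient is the rational avatar of the paper's $W$-quotient of $\mathcal{E}^{\times r}$, so the counts agree for the same structural reason, but your version avoids all the extension-field matrix manipulation. The trade-off is that you import three standard facts the paper never states: (i) is essentially the single-matrix case of the computation in Lemma \ref{sdbarstablemma} (the stabilizer there is exactly $Z_G(\rho(\fr_r))$, a group of order $q+1$ containing $C$), and (ii) is an easy eigenbasis argument; but (iii), conjugacy of all non-split tori under $\SL_2(\F_q)$ (not merely $\GL_2(\F_q)$), deserves a line of proof --- e.g.\ two elements with the same irreducible characteristic polynomial are already $\SL_2(\F_q)$-conjugate because the $\GL_2$-centralizer $\F_{q^2}^{\times}$ has surjective determinant (norm) onto $\F_q^{\times}$, so the $\GL_2$-class does not split; this is the one spot where ``standard'' is doing real work, and it plays the role that the explicit converse of No-Mixing (2) plays in the paper. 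Your alternative covering argument, $\strata_\Dbar=\bigcup_{C'}(C'^{\times r})-\strata_Z$ with distinct non-split tori meeting only in $Z$ and $|G|/|N_G(C)|=q(q-1)/2$ tori, is also correct and arguably the shortest path to $|\strata_\Dbar|$, after which Lemma \ref{sdbarstablemma} and Proposition \ref{uniformactionthm} recover the orbit count.
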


\begin{proof}
Lemma \ref{sdbarstablemma} shows that $G$ acts uniformly of order $q+1$ on $\strata_\Dbar$, thus by Proposition \ref{uniformactionthm}:  $|\strata_\Dbar|=\frac{|G|}{q+1}|\strata_\Dbar \frk G|=\frac{q(q-1)(q+1)}{q+1}|\strata_\Dbar \frk G|=q(q-1)|\strata_\Dbar \frk G|.$  So it suffices to prove that $|\strata_\Dbar/G|=\frac{(q+1)^r-2^r}{2}$.

Let $\mathcal{E}$ be the set of diagonal matrices that are either central, or whose eigenvalues $\lambda$ are zeros of an irreducible polynomial $x^2-tx+1\in \F_q[x]$.  From Lemma \ref{sdbarstablemma}, we know there are exactly $(q-1)/2$ values of $t$ in $\F_q$ so that $x^2-tx+1$ is irreducible in $\F_q[x]$.  Thus, $|\mathcal{E}|=2(q-1)/2+|Z|=(q-1)+2=q+1$ since for each such irreducible polynomial we get exactly two distinct diagonal matrices $\mm{\lambda}{0}{0}{\lambda^{-1}}$ and $\mm{\lambda^{-1}}{0}{0}{\lambda}$.

For the same reason as in the proof of Proposition \ref{stratasdcount}, $W$ acts uniformly of order 2 on $\mathcal{E}^{\times r}-\strata_Z$, thus by Proposition \ref{uniformactionthm}, $|(\mathcal{E}^{\times r}-\strata_Z)\frk W|=2\frac{(q+1)^r-2^r}{4}=\frac{(q+1)^r-2^r}{2}$.  So it suffices to prove that $\strata_{\Dbar}$ is in one-to-one correspondence with $(\mathcal{E}^{\times r}-\strata_Z)\frk W$.  

For any representation $\rho\in \strata_\Dbar$, by definition, there exists $g\in \SL_2(\overline{\F}_q)$ such that $g\rho g^{-1}$ is diagonal.  By the No-Mixing Theorem (Proposition \ref{nomixing}), $g\rho g^{-1}\in \mathcal{E}^{\times r}-\strata_Z$.  We claim that this association defines a bijection $\varphi:\strata_\Dbar/G\to(\mathcal{E}^{\times r}-\strata_Z)\frk W$.

We first show $\varphi$ is well-defined.  Let $g\rho g^{-1}=\left(\mm{e_1}{0}{0}{e_1^{-1}},...,\mm{e_r}{0}{0}{e_r^{-1}}\right)$ and $h\rho h^{-1}=\left(\mm{f_1}{0}{0}{f_1^{-1}},..., \mm{f_r}{0}{0}{f_r^{-1}}\right)$ where $h,g\in \SL_2(\overline{\F}_q)$.  Then $gh^{-1}$ conjugates $h\rho h^{-1}$ into $g\rho g^{-1}$.  As shown in the proof of Proposition \ref{stratasdcount}, using Equations (1)-(4), $gh^{-1}=dw$ for diagonal $d$ and $w\in W$.  Thus, since $d$ acts trivially on diagonal matrices, we conclude that $g\rho g^{-1}$ and $h\rho h^{-1}$ are conjugate via an element of $W$.  This shows $\varphi$ is well-defined.

Similarly, to show that $\varphi$ is injective, suppose that $h \rho_1h^{-1}=g\rho_2g^{-1}$ in $\mathcal{E}^{\times r}-\strata_D$ (up to the action of $W$).  Again, $gh^{-1}$ conjugates non-central diagonal $h\rho _1 h^{-1}$ into non-central diagonal $g\rho_2g^{-1}$, and thus, $gh^{-1}=dw$ where $d$ is diagonal and $w\in W$.  We then have $dwh\rho_2 h^{-1}w^{-1}d^{-1}=h\rho_1 h^{-1}$.  Since diagonal matrices act trivially on diagonal representations, we conclude $w h\rho_2 h^{-1} w^{-1}=h\rho_1 h^{-1}$.  Similar to the proof of Lemma \ref{basefieldreductionlemma}, since $\rho_1$ cannot be upper or lower triangular to begin with (no simultaneous eigenvalues over $\F_q$), we know that $h^{-1}=h_0^{-1}\delta$ where $h_0^{-1}$ has the form $\mm{b}{b}{\lambda-a}{\lambda^{-1}-a}$ where $a,b\in \F_q$ and $\lambda$ is a zero of an irreducible polynomial $x^2-tx+1\in \F_q[x]$, and $\delta$ is diagonal in $\SL_2(\overline{\F}_q)$.  Therefore we have $w\delta ^{-1}h_0\rho_2h_0^{-1}\delta w^{-1}=\delta^{-1}h_0\rho_1h_
0^{-1}\delta$.  Note that that $h_0$ has columns that form a pair of linearly independent simultaneous eigenvectors for each component of $\rho_1$, and thus $h_0\rho_1 h_0^{-1}$ is diagonal, and so $\delta$ acts trivially on it.  Since $w$ preserves the diagonal form of $\rho_1$, we conclude that $w\delta^{-1}h_0\rho_2h_0^{-1}\delta w^{-1}=wh_0\rho_2h_0^{-1}w^{-1}$.  Thus we have $h_0^{-1}wh_0\rho_2h_0^{-1}w^{-1}h_0=\rho_1$.  We claim that $h_0^{-1}wh_0$ is in $\SL_2(\F_q)$; that is, defined over the base field.  If $w$ is central this is obvious.  Otherwise, \begin{align*}h_0^{-1}wh_0&=
\left(
\begin{array}{cc}
 b \left(2 a-\lambda
   -\frac{1}{\lambda }\right)
   & 2 b^2 \\
 -2 a^2+\frac{2 a
   \left(\lambda
   ^2+1\right)}{\lambda
   }-\frac{\lambda
   ^4+1}{\lambda ^2} & b
   \left(-2 a+\lambda
   +\frac{1}{\lambda }\right)
   \\
\end{array}
\right)\\
&=
\left(
\begin{array}{cc}
 b \left(2 a-t\right)
   & 2 b^2 \\
 -2 a^2+2 at -(t^2-2) & b
   \left(-2 a+t\right)
   \\
\end{array}
\right).\end{align*}

Since the trace $t:=\lambda+\lambda^{-1}$ is in $\F_q$, it is apparent that the claim holds.  Therefore, $[\rho_1]_G=[\rho_2]_G$ and $\varphi$ is one-to-one.

Lastly, the converse of part (2) of the No-Mixing Theorem (Proposition \ref{nomixing}), directly says that $\varphi$ is surjective.

\end{proof}

\begin{prop}\label{strataucount}
$|\strata_U/G|=\frac{(2q)^r-2^r}{\frac{q-1}{2}}$ and $|\strata_U|=(q+1)\left((2q)^r-2^r\right)$
\end{prop}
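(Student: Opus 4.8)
The plan is to follow the template of Proposition \ref{stratasdcount}, with the torus $D$ replaced by $U$ and the Weyl group $W$ replaced by the normalizer $N_G(U)$. First I would identify this normalizer: every non-central element of $U$ is a single Jordan block (a unipotent times $\pm\id$) and hence has $\langle e_1\rangle$, with $e_1=(1,0)^\dagger$, as its unique eigenline, so any $g\in G$ with $gUg^{-1}=U$ must preserve $\langle e_1\rangle$ and therefore be upper triangular. Conversely, a one-line computation shows the upper-triangular Borel $B=\left\{\smm{t}{a}{0}{t^{-1}} : t\in\F_q^*,\ a\in\F_q\right\}$ normalizes $U$ (conjugation by $\smm{t}{a}{0}{t^{-1}}$ scales the off-diagonal entry of $\smm{\pm1}{b}{0}{\pm1}$ by $t^2$). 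Hence $N_G(U)=B$ and $|B|=q(q-1)$.

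Next I would establish a bijection $\strata_U/G\cong (U^{\times r}-\strata_Z)/B$ through the commutative diagram
$$\xymatrix{(U^{\times r}-\strata_Z)/ B \ar@{^{(}->}[rr]^{\iota} \ar[drr]_\varphi & & \strata_U/B \ar@{->>}[d]^{\pi}\\ & & \strata_U/ G}$$
where $\iota$ is induced by inclusion and $\pi$ by the projection $\strata_U/B\to\strata_U/G$. The relevant elementary fact is that a tuple in $U^{\times r}$ lies in $\strata_U$ if and only if it is not central: a component $\smm{\epsilon}{a}{0}{\epsilon}$ with $a\neq 0$ is not diagonalizable over $\overline{\F}_q$, so such a tuple avoids $\strata_Z\cup\strata_D\cup\strata_{\Dbar}$, while an all-$\pm\id$ tuple is in $\strata_Z$. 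Surjectivity of $\varphi$ is then immediate from the definition of $\strata_U$: any $G$-orbit in $\strata_U$ meets $U^{\times r}$, and since such an orbit is disjoint from $\strata_Z$ the meeting point lies in $U^{\times r}-\strata_Z$. For injectivity — the analogue of the eigenvector argument in the proof of Proposition \ref{stratasdcount} — I would use that any $\rho\in U^{\times r}-\strata_Z$ has $\langle e_1\rangle$ as its unique simultaneous eigenline (uniqueness because some component is a single Jordan block), so if $x\in G$ conjugates one element of $U^{\times r}-\strata_Z$ to another it must preserve $\langle e_1\rangle$, i.e. $x\in B$; hence the two tuples are already $B$-conjugate. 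This yields $|\strata_U/G|=|(U^{\times r}-\strata_Z)/B|$.

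It then remains to count $|(U^{\times r}-\strata_Z)/B|$ with the Uniform Action Theorem. For $\rho\in U^{\times r}-\strata_Z$, the proof of Lemma \ref{sustablemma} already shows $\stab_G(\rho)=U$, and since $U\subseteq B$ we get $\stab_B(\rho)=U$ of order $2q$; thus $B$ acts uniformly of order $2q$ on $U^{\times r}-\strata_Z$. Since the $2^r$ central tuples of Remark \ref{trivialstrata} all lie in $U^{\times r}$, we have $|U^{\times r}-\strata_Z|=(2q)^r-2^r$, so Proposition \ref{uniformactionthm} gives
$$|\strata_U/G|=\frac{2q\bigl((2q)^r-2^r\bigr)}{|B|}=\frac{2\bigl((2q)^r-2^r\bigr)}{q-1}=\frac{(2q)^r-2^r}{(q-1)/2}.$$
Applying Proposition \ref{uniformactionthm} once more, now with $G$ acting uniformly of order $2q$ on $\strata_U$ by Lemma \ref{sustablemma},
$$|\strata_U|=\frac{|G|}{2q}\,|\strata_U/G|=\frac{(q-1)(q+1)}{2}\cdot\frac{2\bigl((2q)^r-2^r\bigr)}{q-1}=(q+1)\bigl((2q)^r-2^r\bigr).$$

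I expect the main obstacle to be the injectivity of $\varphi$, i.e. showing that $G$-conjugacy between non-central tuples in $U^{\times r}$ collapses to $B$-conjugacy; this is precisely where the uniqueness of the common eigenline is essential. A secondary point requiring care is that, unlike in Proposition \ref{stratasdcount} where $D$ is central in its normalizer, here $U$ is normal but not central in $B$, so one must verify separately that the $B$-stabilizer of a tuple in $U^{\times r}-\strata_Z$ really is all of $U$ (which follows from Lemma \ref{sustablemma}) and that $B$ preserves $U^{\times r}-\strata_Z$ (which follows from $N_G(U)=B$). Everything past the bijection is routine arithmetic with the Uniform Action Theorem.
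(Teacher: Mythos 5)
Your proposal is correct and follows essentially the same route as the paper: the paper's auxiliary group $T=\left\{\smm{t}{a}{0}{t^{-1}}\right\}$ is exactly your Borel $B$, and the paper uses the same commutative diagram, the same eigenvector-preservation argument for injectivity of $\varphi$, and the same uniform-action count $\stab_T(\rho)=U$ of order $2q$ leading to the identical arithmetic. Your framing of $B$ as $N_G(U)$ and the explicit remark that $U^{\times r}-\strata_Z\subset\strata_U$ are just slightly more detailed versions of what the paper leaves implicit.
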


\begin{proof}
Let $T=\left\{\mm{x}{y}{0}{x^{-1}} \in G  \ | \ x\in\F_q^*,\ y \in\F_q\right\}$, which has order $q(q-1)$. Since upper-triangular matrices preserve upper-triangular matrices by conjugation, $T$ acts on $U^{\times r}$.  We claim the map $\varphi$ defined in the following diagram is a bijection:
$$\xymatrix{(U^{\times r}-\strata_Z)/ T \ar@{^{(}->}[rr]^{\iota} \ar[drr]_\varphi & & \strata_U/T \ar@{->>}[d]^{\pi}\\ & & \strata_U/ G}$$

To show that $\varphi$ is surjective, let $[\rho]_G\in \strata_U/G$.  Then by definition, there exists $g\in G$ so $g\rho g^{-1}\in U^{\times r}-\strata_Z$.  Then $\varphi([g\rho g^{-1}]_T)=\pi \circ\iota \left(\left[g\rho g^{-1}\right]_T\right)=\pi\left(\left[g\rho g^{-1}\right]_T\right)=\left[g\rho g^{-1}\right]_G=\left[\rho\right]_G$.

To prove that $\phi$ is injective, let $\rho_1,\rho_2\in U^{\times r}-\strata_Z$ and suppose we have $[\rho_1]_G=\varphi\left(\left[\rho_1\right]_T\right)=\varphi\left(\left[\rho_2\right]_T\right)=[\rho_2]_G$. Then, there exists a $g\in G$ so that $g\rho_1 g^{-1}=\rho_2$. Let $g=\mm{w}{x}{y}{z}$. Since $\rho_1\in U^{\times r}-\strata_Z$, there exists a component of $\rho_1$, call it $A_i$, so that $A_i\in U-Z$.  In other words, $A_i=\mm{\pm1}{a_i}{0}{\pm1}$ for some $a_i\in\F_q^*$.  Clearly, $B_i:=gA_ig^{-1}\not=\pm \id$.  Thus $B_i=\mm{\pm1}{b_i}{0}{\pm1}$ where $b_i\in \F_q^*$.  Note that the $\pm 1$'s correspond since the eigenvalues are repeated and conjugation does not change their value.  Then since $g$ preserves the eigenvector $(1,0)^\dagger$, one concludes that $g\in T$, and so $\left[\rho_1\right]_T=\left[\rho_2\right]_T$; showing $\varphi$ is injective.

Next we show that $T$ acts uniformly on $U^{\times r}-\strata_Z$.  Since $T\subset G$, we immediately have $\stab_T(\rho)\subset \stab_G(\rho)$. Conversely, we showed in Lemma \ref{sustablemma} that $\stab_G(\rho)=U$. But $U\subset T$, and so we obtain $\stab_G(\rho)\subset\stab_T(\rho)$.  So for any $\rho\in U^{\times r}-\strata_Z$, $U=\stab_G(\rho)=\stab_T(\rho)$, and therefore $T$ acts uniformly of order $2q$ on $U^{\times r}-\strata_Z$ since $|U|=2q$.  Therefore, by Proposition \ref{uniformactionthm} and the bijection $\varphi$, $|\strata_U/G|=\left|(U^{\times r}-\strata_Z)\frk T\right|=2q\frac{(2q)^r-2^r}{q(q-1)}=\frac{(2q)^r-2^r}{\frac{q-1}{2}}.$  Then by Proposition \ref{uniformactionthm} and Lemma \ref{sustablemma}, we conclude $\left|\strata_U \frk G\right|=2q \frac{|\strata_U|}{|G|}$ and so $|\strata_U|= \frac{q(q+1)(q-1)}{2q}\frac{(2q)^r-2^r}{\frac{q-1}{2}}= (q+1)((2q)^r-2^r).$

\end{proof}

Let $(T^{\times r})^*=T^{\times r}-(\strata_D\cup \strata_U\cup \strata_Z)$.  By Lemma \ref{snrstablemma}, $T$ acts on $(T^{\times r})^*$ uniformly of order 2.

\begin{lemma}
$\sn/G$ is in bijective correspondence with $(T^{\times r})^*/T.$
\end{lemma}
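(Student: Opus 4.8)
The plan is to construct an explicit bijection between $\sn/G$ and $(T^{\times r})^*/T$ that mirrors the diagram-chasing arguments used for $\strata_D$ and $\strata_U$ in Propositions \ref{stratasdcount} and \ref{strataucount}. The key geometric input, already established via the No-Mixing Theorem (Proposition \ref{nomixing}(3)) and Lemma \ref{basefieldreductionlemma}, is that every $\rho\in\sn$ is reducible over $\F_q$ itself: there is $g\in G=\SL_2(\F_q)$ with $g\rho g^{-1}$ upper-triangular, hence (since $\rho$ is non-Abelian but its eigenvalues lie in $\F_q$) $g\rho g^{-1}\in (T^{\times r})^*$. So one first sets up the commutative triangle
$$\xymatrix{(T^{\times r})^*/ T \ar@{^{(}->}[rr]^{\iota} \ar[drr]_\varphi & & \sn/T \ar@{->>}[d]^{\pi}\\ & & \sn/ G}$$
with $\varphi=\pi\circ\iota$, and the goal becomes showing $\varphi$ is a bijection.

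Surjectivity of $\varphi$ is immediate from the reducibility-over-$\F_q$ fact above, exactly as in the earlier propositions: given $[\rho]_G$, pick $g\in G$ with $g\rho g^{-1}\in (T^{\times r})^*$ and note $\varphi([g\rho g^{-1}]_T)=[\rho]_G$. For injectivity, suppose $\rho_1,\rho_2\in (T^{\times r})^*$ are $G$-conjugate, say $\rho_1 = x\rho_2 x^{-1}$ with $x\in G$; I must show $x$ can be taken in $T$, equivalently that $x$ maps $[\rho_2]_T$ to $[\rho_1]_T$. The point is that a non-Abelian reducible $\rho_i$ has a \emph{unique} common eigenline over $\overline{\F}_q$ — this is essentially the content of the stabilizer computation in Lemma \ref{snrstablemma}, where the presence of a factor $A_{i_1}$ with distinct eigenvalues forces diagonal stabilizers, and then a factor $A_{i_2}$ with nonzero off-diagonal entry pins down the line. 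Since both $\rho_1$ and $\rho_2$ are upper-triangular, each has $(1,0)^\dagger$ spanning that unique common eigenline, and $x$ must carry the common eigenline of $\rho_2$ to that of $\rho_1$, i.e.\ $x\cdot\langle (1,0)^\dagger\rangle = \langle(1,0)^\dagger\rangle$, which says precisely $x\in T$. Hence $[\rho_1]_T=[\rho_2]_T$ and $\varphi$ is injective, so $\varphi$ is a bijection and $\sn/G \cong (T^{\times r})^*/T$.

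I expect the main obstacle to be the uniqueness of the common eigenline — more precisely, verifying carefully that a non-Abelian reducible representation over $\F_q$ cannot have two distinct common eigenlines (which would make it simultaneously diagonalizable over $\F_q$, hence Abelian or in $\strata_D$, contradicting membership in $\sn$) and cannot have its entire reducibility structure be ``accidental'' in a way that lets $x$ move the eigenline. This is where one leans on the detailed case analysis already carried out: if every component had repeated eigenvalue $\pm1$ the representation would be projectively unipotent (in $\strata_U$) or central, and if it were simultaneously diagonalizable over $\F_q$ it would lie in $\strata_D\cup\strata_Z$; excluding these, the normal form with one factor $\smm{a}{0}{0}{a^{-1}}$, $a\neq\pm1$, and one factor $\smm{b}{y}{0}{b^{-1}}$, $y\neq0$, from the proof of Lemma \ref{snrstablemma} shows the only eigenline shared by all factors is $\langle(1,0)^\dagger\rangle$. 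Once that rigidity is in hand, the remaining bookkeeping is routine and parallels the $\strata_U$ case verbatim.
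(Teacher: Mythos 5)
Your proof is correct, and its overall structure matches the paper's: the same triangle with $\varphi=\pi\circ\iota$, surjectivity from Lemma \ref{basefieldreductionlemma} together with Proposition \ref{strataprop} (via the No-Mixing Theorem), and injectivity by forcing the conjugating element into $T$. Where you diverge is in how you force $x\in T$: the paper splits into two cases (a non-central component lying in $U$, handled as in Lemma \ref{sustablemma}, versus a component with distinct eigenvalues) and in the second case computes the lower-left entry of $gA_kg^{-1}$ explicitly, obtaining $y\bigl(z(a_k-a_k^{-1})-b_ky\bigr)=0$ and showing that $y\neq 0$ would produce a single unipotent $\smm{1}{b_k/(a_k-a_k^{-1})}{0}{1}$ simultaneously diagonalizing $\rho_1$, contradicting $(T^{\times r})^*\cap\strata_D=\emptyset$. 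You instead argue structurally: a non-Abelian reducible tuple has a unique common eigenline (two distinct common eigenlines would give simultaneous diagonalizability over $\F_q$, hence an Abelian tuple in $\strata_D\cup\strata_Z$, excluded from $(T^{\times r})^*$), so $x$ must preserve $\langle(1,0)^\dagger\rangle$ and hence lie in $T$. Your version buys uniformity — it treats the paper's two cases at once and avoids the matrix computation — while the paper's calculation has the minor virtue of exhibiting the diagonalizing element explicitly, a normal form it reuses in the surrounding counting lemmas. One small caveat: the rigidity you need is exactly the two-eigenline observation, not Lemma \ref{snrstablemma} itself (the stabilizer computation is a cousin of, but not a substitute for, that statement), so in a final write-up you should state and prove the uniqueness of the common eigenline directly as you sketch in your last paragraph.
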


\begin{proof}
By Proposition \ref{strataprop}, $(T^{\times r})^*\subset \sn$, and so there is a mapping $\varphi:(T^{\times r})^*/T\to \sn/G$.  By Lemma \ref{basefieldreductionlemma} and Proposition \ref{strataprop}, every $[\rho]_G\in \sn/G$ is represented by an element in $(T^{\times r})^*$, and so $\varphi$ is onto.  Now let $[\rho_1]_G=[\rho_2]_G$ where $\rho_1$ and $\rho_2$ are in $(T^{\times r})^*$.  Then there exists $g\in G$ so $g\rho_1 g^{-1}=\rho_2$.  Let $g=\mm{w}{x}{y}{z}$.  By assumption $\rho_1$ and $\rho_2$ have corresponding non-central upper-triangular components $gA_i g^{-1}=B_i$.  If any non-central component $A_i\in U$, then $A_i=\mm{\pm1}{a_i}{0}{\pm1}$ for some $a_i\in\F_q^*$.  Since $a_i\not=0$, $B_i:=gA_ig^{-1}\not=\pm \id$.  Thus $B_i=\mm{\pm1}{b_i}{0}{\pm1}$ where $b_i\in \F_q^*$, and the $\pm 1$'s correspond since the eigenvalues are repeated and conjugation does not change their value.  So, as in the proof of Lemma \ref{sustablemma}, we have that $g\in T$.  

Otherwise, all non-central components $A_i$ have distinct eigenvalues (since $\rho_1, \rho_2$ are in $(T^{\times r})^*$, in this case, there must be at least two such components).  Consider any such component $A_k=\mm{a_k}{b_k}{0}{a_k^{-1}}$ and its corresponding component $B_k=\mm{e_k}{f_k}{0}{e_k^{-1}}$.  Then for each such component,
\begin{align*}
\mm{e_k}{f_k}{0}{e_k^{-1}}=&\mm{w}{x}{y}{z}\mm{a_k}{b_k}{0}{a_k^{-1}}\mm{z}{-x}{-y}{w}\\
  =&
  \left(
\begin{array}{cc}
 w z a_k-y
   \left(\frac{x}{a_k}+w
   b_k\right) & w \left(x
   \left(-a_k\right)+\frac{x}{
   a_k}+w b_k\right) \\
 y z a_k-y
   \left(\frac{z}{a_k}+y
   b_k\right) & \frac{w
   z}{a_k}-x y a_k+w y b_k \\
\end{array}
\right).
 \end{align*}
This implies that $y z a_k-y\left(\frac{z}{a_k}+yb_k\right)=0$, or $y(z(a_k-a_k^{-1})-b_ky)=0$. Now, either $y=0$ or $y\neq 0$. If $y=0$, then $g\in T$. Otherwise, since the eigenvalues of $A_k$ are distinct, $z=\frac{b_ky}{a_k-a_k^{-1}}$.  Therefore, for all indices $k,j$ corresponding to non-central components,  
$\frac{b_ky}{a_k-a_k^{-1}}=\frac{b_jy}{a_j-a_j^{-1}}$ and thus $\frac{b_k}{a_k-a_k^{-1}}=\frac{b_j}{a_j-a_j^{-1}}$ since $y\neq 0$.
Thus, conjugating $\rho_1$ by $\left(
\begin{array}{cc}
 1 &
   \frac{b_k}{a_k-a_k^{-1}
   } \\
 0 & 1 \\
\end{array}
\right)$ diagonalizes each component simultaneously.  However, $(T^{\times r})^*\cap \strata_D=\emptyset$.  Thus, it must be the case that $y=0$.  Said differently, there exists $g\in T$ so $g\rho_1 g^{-1}=\rho_2$, implying $[\rho_1]_T=[\rho_2]_T$ and showing $\varphi$ is injective.

\end{proof}

Let $T^*=T-U\cup D$.  For any two elements $A_1=\mm{a_1}{b_1}{0}{a_1^{-1}}$ and $A_2=\mm{a_2}{b_2}{0}{a_2^{-1}}$ in $T^*$, we will say they are {\it diagonally compatible} if $\frac{b_1}{a_1-a_1^{-1}}=\frac{b_2}{a_2-a_2^{-1}}$.  In this case, we will write $A_1\sim_D A_2$.  Define $$\strata_{T\!D}=\left\{(A_1,...,A_r)\in (T^*\cup Z)^{\times r}\ | \text{ if }A_i,A_j\in T^*\text{ then } A_i\sim_D A_j\right\}-\strata_Z.$$

\begin{lemma}
$\strata_D\cap T^{\times r}=\strata_{T\!D}\cup (D^{\times r}-Z^{\times r})$. 
 \end{lemma}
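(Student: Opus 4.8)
The plan is to prove the two inclusions separately. First I would record the decomposition $T=Z\sqcup(U-Z)\sqcup(D-Z)\sqcup T^{*}$: writing an element of $T$ as $\smm{a}{b}{0}{a^{-1}}$, it lies in $T^{*}$ exactly when $a\neq\pm1$ (so its eigenvalues are distinct) and $b\neq0$ (so it is not diagonal), in $U-Z$ when $a=\pm1$ and $b\neq0$ (a non-trivial Jordan block, hence not diagonalizable), in $D-Z$ when $a\neq\pm1$ and $b=0$, and in $Z$ when $a=\pm1$ and $b=0$. I will also use that $\strata_Z=Z^{\times r}$ under the evaluation identification $\R_r\cong G^{\times r}$, and that $\rho\in\strata_D$ is exactly the condition that the components $A_1,\dots,A_r$ are simultaneously diagonalizable by some $g\in G$ and that $\rho$ is non-central.

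For the inclusion $\supseteq$: the containment $D^{\times r}-Z^{\times r}\subseteq\strata_D\cap T^{\times r}$ is immediate, since such a tuple is already diagonal, hence lies in $T^{\times r}$, is (trivially) diagonalizable, and is non-central. For $\strata_{T\!D}\subseteq\strata_D\cap T^{\times r}$, I would take $\rho=(A_1,\dots,A_r)\in\strata_{T\!D}$; by definition $\rho\notin\strata_Z$ and $\rho\in(T^{*}\cup Z)^{\times r}\subseteq T^{\times r}$, and its $T^{*}$-components share a common value $c=b_i/(a_i-a_i^{-1})\in\F_q$ of the diagonal-compatibility ratio. Conjugating by $P=\smm{1}{c}{0}{1}\in G$ simultaneously diagonalizes every component, by the identity $P\smm{a}{b}{0}{a^{-1}}P^{-1}=\smm{a}{b-c(a-a^{-1})}{0}{a^{-1}}$ already exploited in the preceding lemma; hence $P\rho P^{-1}\in D^{\times r}$ and $\rho\in\strata_D$.

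For the inclusion $\subseteq$: take $\rho=(A_1,\dots,A_r)\in\strata_D\cap T^{\times r}$, so all $A_i$ are upper-triangular, $\rho\notin\strata_Z$, and $g\rho g^{-1}\in D^{\times r}$ for some $g\in G$. Two observations drive the argument. First, each $A_i$ is conjugate to a diagonal matrix, hence individually diagonalizable, which rules out $A_i\in U-Z$. Second, if some component lies in $T^{*}$ and another in $D-Z$ they cannot admit a common eigenbasis: an element of $D-Z$ has exactly the two eigenlines $\langle e_1\rangle$ and $\langle e_2\rangle$, so a common eigenbasis could only be $\{e_1,e_2\}$ up to scaling, but a member of $T^{*}$ is not diagonal and so does not preserve $\langle e_2\rangle$ --- contradicting $\rho\in\strata_D$. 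Combining these, either no $A_i$ lies in $T^{*}$, whence every $A_i$ is diagonal and $\rho\in D^{\times r}-Z^{\times r}$; or some $A_i$ lies in $T^{*}$, whence every component lies in $T^{*}\cup Z$.

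In the latter case it remains to verify pairwise diagonal compatibility of the $T^{*}$-components. For $A_i\in T^{*}$ a direct computation (using $\det A_i=1$ and $\tr A_i=a_i+a_i^{-1}$) shows that its unique eigenline other than $\langle e_1\rangle$ is spanned by $(-t_i,1)^{\dagger}$ with $t_i=b_i/(a_i-a_i^{-1})$. Since $g\rho g^{-1}$ is diagonal, the two columns of $g^{-1}$ form a common eigenbasis of all the $A_i$; being linearly independent they are not both multiples of $e_1$, so at least one of them, say $w$, is independent of $e_1$. Then $w$ must lie on the non-$e_1$ eigenline of every $A_i\in T^{*}$, which forces all the $t_i$ to coincide, i.e. $A_i\sim_D A_j$ for all $T^{*}$-components; hence $\rho\in\strata_{T\!D}$ and the proof is complete. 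I expect the only delicate point to be the bookkeeping in the $\subseteq$ direction --- ruling out tuples that mix a $D-Z$ component with a $T^{*}$ component, so that the non-diagonal case lands cleanly inside $(T^{*}\cup Z)^{\times r}$ rather than straddling both pieces of the union --- together with keeping ``diagonalizable'' over $\F_q$ throughout so that every conjugating matrix really lies in $G$; the $2\times2$ computations themselves are routine and mirror ones already carried out for the previous lemma.
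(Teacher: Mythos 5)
Your proof is correct and follows essentially the same route as the paper: both directions use the same decomposition of $T$, the unitriangular conjugation $\smm{1}{c}{0}{1}$ for $\supseteq$, and the forced coincidence of the non-$\langle e_1\rangle$ eigenlines for $\subseteq$. Your phrasing via common eigenvectors of the columns of $g^{-1}$ is just a repackaging of the paper's computation with $g=\smm{1}{z}{0}{1}$ and $z=b_1/(a_1-a_1^{-1})$, which simultaneously yields both that $b_k=0$ forces $a_k=\pm1$ and the compatibility of the ratios.
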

 
\begin{proof}
By definition, $D^{\times r}-Z^{\times r}\subset \strata_D\cap T^{\times r}$ and $\strata_{T\!D}\subset T^{\times r}$.  So to prove the inclusion $\strata_{T\!D}\cup (D^{\times r}-Z^{\times r})\subset\strata_D\cap T^{\times r}$, we need to show that $\strata_{T\!D}\subset \strata_D$. Let $\rho=(A_1,...,A_r)\in \strata_{T\!D}$. By definition $\rho$ is non-central, and all $A_i\in T^*$ are diagonally compatible. Since $\mm{1}{\frac{b_i}{a_i-a_i^{-1}}}{0}{1}$ diagonalizes $A_i$, it then diagonalizes $\rho$.  Therefore, $\rho \in \strata_D$, as required.

Conversely, let $\rho=(A_1,...,A_r)\in \strata_D\cap T^{\times r}$.  Then for all $1\leq i\leq r$,
$A_i=\mm{a_i}{b_i}{0}{a_i^{-1}}$. If $b_i=0$ for all $1\leq i\leq r$, then $\rho\in D^{\times r}\subset\strata_{T\!D}\cup D^{\times r}$.
  
Otherwise, some $b_i\not=0$.  Without loss of generality, suppose that $b_1\neq 0$. Then for any $k$, we want to show:
\begin{enumerate}
\item if $b_k=0$, then $a_k=\pm1$;
\item if $b_k\neq 0$, then $a_k\neq a_k^{-1}$ and $\frac{b_1}{a_1-a_1^{-1}}=\frac{b_k}{a_k-a_k^{-1}}$.
\end{enumerate}

First note that if any component $A_k\in U-Z$, $\rho$ would not be diagonalizable.  Thus $A_1$ satisfies condition (2).  Also, this implies that any component $A_k$ where $b_k\not=0$ automatically satisfies $a_k\not=a_k^{-1}$, and thus is in $T^*$.

Since $A_1,...,A_r$ share $(1, 0)^\dagger$ as a common eigenvector and $\rho$ can be diagonalized via a determinant 1 matrix $g$, we know that $g$ may be taken to have the form $\mm{1}{z}{0}{1}$.  Solving for $z$ in $gA_1g^{-1}=\mm{a_1}{0}{0}{a_1^{-1}}$ we conclude that $z=\frac{b_1}{a_1-a_1^{-1}}$.

Conjugating any other $A_k$ by $g$, we conclude that if $b_k=0$, then it must be the case that $a_k=\pm1$, which establishes item (1).  On the other hand, if $b_k\neq0$, then $\frac{b_1}{a_1-a_1^{-1}}=\frac{b_k}{a_k-a_k^{-1}}$, as required to establish item (2).  Since (1) and (2) are satisfied, $\rho\in \strata_{T\!D}\subset \strata_{T\!D}\cup D^{\times r}$.
\end{proof}

\begin{lemma}
${\displaystyle |\strata_D\cap T^{\times r}|=q\left((q-1)^r-2^r\right)}$

\end{lemma}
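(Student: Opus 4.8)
The plan is to reduce the count to the preceding lemma and then carry out an elementary enumeration. By that lemma, $\strata_D\cap T^{\times r}=\strata_{T\!D}\cup(D^{\times r}-Z^{\times r})$, and this union is disjoint: every tuple in $\strata_{T\!D}$ has at least one component in $T^*$, and $T^*\cap D=\emptyset$ by the definition of $T^*$, so no such tuple can lie in $D^{\times r}$. Since $|D|=q-1$, $|Z|=2$, and $Z\subset D$, this already gives $|\strata_D\cap T^{\times r}|=|\strata_{T\!D}|+\bigl((q-1)^r-2^r\bigr)$, so everything comes down to computing $|\strata_{T\!D}|$.

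To compute $|\strata_{T\!D}|$ I would stratify it by the common diagonal-compatibility parameter. By definition of $T^*$, a matrix $\mm{x}{y}{0}{x^{-1}}\in T^*$ has $x\neq\pm1$ and $y\neq0$, so the scalar $c:=y/(x-x^{-1})$ lies in $\F_q^*$; conversely, for each fixed $c\in\F_q^*$ the matrices of $T^*$ with that parameter are precisely $\mm{x}{c(x-x^{-1})}{0}{x^{-1}}$ with $x\in\F_q^*\setminus\{\pm1\}$, hence there are exactly $q-3$ of them. Now any $\rho\in\strata_{T\!D}$ has at least one $T^*$-component (this is exactly what subtracting $\strata_Z$ ensures), and the relation $\sim_D$ --- being equality of the scalar $y/(x-x^{-1})$ --- forces all $T^*$-components of $\rho$ to share a single value $c$. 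Thus $\rho$ lies in exactly one of the sets $S_c$, $c\in\F_q^*$, where $S_c$ consists of those tuples all of whose non-central coordinates lie in $T^*$ with parameter $c$ and which have at least one non-central coordinate; that is, $\strata_{T\!D}=\bigsqcup_{c\in\F_q^*}S_c$.

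For a fixed $c$ one counts $S_c$ coordinate by coordinate: each coordinate is either one of the $2$ central matrices or one of the $q-3$ matrices of $T^*$ with parameter $c$ --- so $q-1$ options --- giving $(q-1)^r$ tuples, of which the $2^r$ all-central ones are excluded, so $|S_c|=(q-1)^r-2^r$. Summing over the $q-1$ choices of $c$ yields $|\strata_{T\!D}|=(q-1)\bigl((q-1)^r-2^r\bigr)$, and adding the contribution of $D^{\times r}-Z^{\times r}$ gives
$$|\strata_D\cap T^{\times r}|=(q-1)\bigl((q-1)^r-2^r\bigr)+\bigl((q-1)^r-2^r\bigr)=q\bigl((q-1)^r-2^r\bigr),$$
as claimed.

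The only step requiring genuine care is the disjointness $\strata_{T\!D}=\bigsqcup_c S_c$, i.e.\ the claim that each tuple in $\strata_{T\!D}$ has a well-defined parameter; but this is precisely guaranteed by the two features built into the definition of $\strata_{T\!D}$ --- the presence of at least one $T^*$-component and the requirement that any two $T^*$-components be $\sim_D$-related, with $\sim_D$ an equivalence relation. Everything else is routine bookkeeping, so I do not anticipate a serious obstacle.
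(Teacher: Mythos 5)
Your proof is correct and follows essentially the same route as the paper: reduce via the preceding lemma to counting $\strata_{T\!D}\sqcup(D^{\times r}-Z^{\times r})$, then count $\strata_{T\!D}$ by fixing the common compatibility parameter ($q-1$ choices) and letting each coordinate be central or in $T^*$ with that parameter. Your per-coordinate count giving $(q-1)^r-2^r$ directly is just a cleaner packaging of the paper's binomial sum $\sum_{k}\binom{r}{k}2^k(q-3)^{r-k}(q-1)$, so there is no substantive difference.
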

 
\begin{proof}
By the previous lemma, we need to count $\strata_{T\!D}\cup (D^{\times r}-Z^{\times r})$.  Since by definition, $\strata_{T\!D}$ contains no diagonal matrices, this union is disjoint.  Clearly, $|D^{\times r}-Z^{\times r}|=(q-1)^r-2^r$.  Now let $\rho=(A_1,...,A_r)\in \strata_{T\!D}$ where $A_i=\mm{a_i}{b_i}{0}{a_i^{-1}}$.  The factors of $\rho$ are either in $Z$ or they are in $T^*$, and $\rho\not\in\strata_Z$.  Since there are $r$ factors, we enumerate over the number of factors that are in $Z$ (at most $r-1$).  If no factor is in $Z$, then we have $(q-3)^r$ choices for the diagonal elements ($a_i\not=0,\pm 1$) and since the upper-right entries are all determined by the value of only one (and non-zero) there is a further $q-1$ choices for that coordinate.  To see this notice that by condition (2) in the previous lemma, $b_i=b_j\frac{a_i-a_i^{-1}}{a_j-a_j^{-1}}$ for any $i$ or $j$.  Thus, in that case there are $(q-3)^r(q-1)$ choices for $\rho$.  Supposing now there are $k$ factors in $Z$ (where $1\leq k\
leq r-1$) in 
fixed position, we then have $(q-3)^{r-k}(q-1)$ choices for those $k$ factors (for exactly the same reason as in the $k=0$ case) times the number of choices for central components; namely $2^k$.  However, there are exactly $\binom{r}{k}$ choices for the positions of those $k$ central components.  So we further must multiply by $\binom{r}{k}$.  Enumerating over $k$, we conclude that there are $\sum_{k=0}^{r-1}\binom{r}{k}2^k(q-3)^{r-k}(q-1)=((q-1)^r-2^r)(q-1)$ representations in $\strata_{T\!D}$.  The result follows.
\end{proof}

\begin{prop}\label{stratanrcount}
 $$|\sn/G|=\frac{2}{q(q-1)}\left((q-1)^rq^r-(2q)^r-q((q-1)^r-2^r)\right)$$
and 
$$|\sn|=(q+1)\left((q-1)^rq^r-(2q)^r-q((q-1)^r-2^r)\right)$$
\end{prop}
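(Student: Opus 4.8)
The plan is to reduce the whole count to the upper-triangular model $(T^{\times r})^*$ that has already been assembled, and then apply the Uniform Action Theorem (Proposition \ref{uniformactionthm}) twice. By the lemma giving the bijection $\sn/G \cong (T^{\times r})^*/T$, together with the observation recorded just before it that $T$ acts on $(T^{\times r})^*$ uniformly of order $2$, we have
$$|\sn/G| \;=\; |(T^{\times r})^*/T| \;=\; \frac{2\,|(T^{\times r})^*|}{|T|} \;=\; \frac{2}{q(q-1)}\,|(T^{\times r})^*|,$$
so the proposition reduces to computing $|(T^{\times r})^*|$.

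For that step I would simply subtract inside $T^{\times r}$. Since $(T^{\times r})^* = T^{\times r} - (\strata_D \cup \strata_U \cup \strata_Z)$ and these three strata are pairwise disjoint by Proposition \ref{strataprop},
$$|(T^{\times r})^*| \;=\; |T^{\times r}| - |\strata_D \cap T^{\times r}| - |\strata_U \cap T^{\times r}| - |\strata_Z \cap T^{\times r}|.$$
Here $|T^{\times r}| = (q(q-1))^r = (q-1)^r q^r$; the value $|\strata_D \cap T^{\times r}| = q((q-1)^r - 2^r)$ is exactly the preceding lemma; $\strata_Z \subset U^{\times r} \subset T^{\times r}$, so $|\strata_Z \cap T^{\times r}| = 2^r$ by Remark \ref{trivialstrata}; and for the projectively unipotent term I would verify $\strata_U \cap T^{\times r} = U^{\times r} - \strata_Z$, which has cardinality $(2q)^r - 2^r$ since $|U| = 2q$. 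Substituting and cancelling the isolated $\pm 2^r$'s leaves
$$|(T^{\times r})^*| \;=\; (q-1)^r q^r - (2q)^r - q\bigl((q-1)^r - 2^r\bigr),$$
and multiplying by $\tfrac{2}{q(q-1)}$ gives the claimed formula for $|\sn/G|$. For the second formula, $G$ acts on $\sn$ uniformly of order $2$ by Lemma \ref{snrstablemma}, so one more application of Proposition \ref{uniformactionthm} yields $|\sn| = \tfrac{|G|}{2}\,|\sn/G| = \tfrac{q(q-1)(q+1)}{2}\,|\sn/G|$, i.e.\ $(q+1)$ times the same bracketed expression.

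The one spot requiring an actual argument rather than bookkeeping is the identity $\strata_U \cap T^{\times r} = U^{\times r} - \strata_Z$. For $\supseteq$: a representation valued in $U^{\times r}$ but not in $\strata_Z$ has a non-central component $\smm{\pm 1}{b_i}{0}{\pm 1}$ with $b_i \neq 0$, which has a repeated eigenvalue but is not diagonalizable even over $\overline{\F}_q$, so $\rho$ lies in neither $\strata_D$ nor $\strata_{\overline{D}}$, and it is clearly not central; being already in $U^{\times r}$ it therefore lies in $\strata_U$. For $\subseteq$: if $\rho \in \strata_U \cap T^{\times r}$ then each $\rho(\gamma_i)$ is simultaneously upper-triangular and conjugate into $U$, hence has repeated eigenvalue $\pm 1$ and is therefore already an element of $U$. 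Everything else is substitution, so I do not expect a real obstacle here — the genuine content is the bijection lemma and the earlier count $|\strata_D \cap T^{\times r}| = q((q-1)^r - 2^r)$, both of which are in hand.
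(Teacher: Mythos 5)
Your argument is correct and follows essentially the same route as the paper: the bijection $\sn/G\cong (T^{\times r})^*/T$ together with the uniform order-$2$ action of $T$, inclusion-exclusion inside $T^{\times r}$ using $|\strata_D\cap T^{\times r}|=q((q-1)^r-2^r)$, and then the uniform order-$2$ action of $G$ to pass from $|\sn/G|$ to $|\sn|$. The only difference is that you explicitly verify $\strata_U\cap T^{\times r}=U^{\times r}-Z^{\times r}$, which the paper simply asserts; your verification is sound.
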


\begin{proof}
First note that $T$ acts uniformly of order 2 on $(T^{\times r})^*$, and by definition $(T^{\times r})^*=T^{\times r}-\strata_U\cup\strata_D\cup\strata_Z$.  Since $\strata_U\cap T^{\times r}=U^{\times r}-Z^{\times r}$, $\strata_Z\cap T^{\times r}=Z^{\times r}$, and $\strata_D\cap T^{\times r}=\strata_{T\!D}\cup (D^{\times r}-Z^{\times r})$, we conclude that $|(T^{\times r})^*|=(q-1)^rq^r-((2q)^r-2^r)-2^r-q((q-1)^r-2^r),$ and consequently, $|\sn/G|=|(T^{\times r})^*/T|=\frac{2}{q(q-1)}\left((q-1)^rq^r-(2q)^r-q((q-1)^r-2^r)\right).$

Therefore, since $G$ acts uniformly of order 2 on $\sn$, we conclude that: \begin{align*}&|\sn|=\frac{q(q-1)(q+1)}{2}|\sn/G|\\&=\frac{q(q-1)(q+1)}{2}\frac{2}{q(q-1)}\left((q-1)^rq^r-(2q)^r-q((q-1)^r-2^r)\right)\\ &=(q+1)\left((q-1)^rq^r-(2q)^r-q((q-1)^r-2^r)\right)\end{align*}
\end{proof}

\section{Galois Action and Absolutely Irreducibles}

The $\F_q$-points of the GIT quotient $\X_r(\C)$ correspond to the Zariski closed $\SL_2(\overline{\F}_q)$-conjugation orbits in $\mathrm{Hom}(\fr_r,\SL_2(\overline{\F}_q))$.  The points in $\strata_U$ and those in $\sn$ are upper-triangular and therefore not conjugate to elements in $\strata_Z$ and $\strata_D$ (respectively).  However, the coordinate ring of $\X_r(\C)$ is generated by traces, and the representations in $\strata_U$ and those in $\sn$ cannot be distinguished from those in $\strata_Z$ and $\strata_D$ (respectively) via traces alone.  Therefore, the elements in $\strata_U$ and those in $\sn$ do not have closed orbits.  Clearly, all other representations in $\mathrm{Hom}(\fr_r,\SL_2(\overline{\F}_q))$ do have closed orbits.  Let $\QS^*_r(\F_q)\subset \QS_r(\F_q)$ be the set of all closed orbits.  Then $\QS^*_r(\F_q)=(\strata_Z\cup\strata_D\cup \strata_{\overline{D}}\cup\strata_{\AI})/\SL_2(\F_q)$.  

Naturally, there is a mapping from $\QS^*_r(\F_q)$ onto the $\F_q$-points of $\X_r(\overline{\F}_q)$.  We now show that this mapping is injective on the Abelian locus, and 2-to-1 on the irreducible locus.

First note that for the mapping to be non-injective, we must have representations that are not conjugate via $\SL_2(\F_q)$ yet are conjugate via $\SL_2(\overline{\F}_q)$.  Moreover, any such equivalence must preserve the locus of $\F_q$-points in $\mathrm{Hom}(\fr_r,\SL_2(\overline{\F}_q))$.  We will think of such an action on a given stratum as {\it Galois} since it preserves the $\F_q$-points of that stratum yet identifies points using coordinates in an extension.  

Now let $g=
\left(
\begin{array}{cc}
 a & b \\
 c & d \\
\end{array}
\right)$ be in $\SL_2(\overline{\F}_q)$, and suppose that it preserves the $\F_q$-points in $\mathrm{Hom}(\fr_r,\SL_2(\overline{\F}_q))$; namely, $\mathrm{Hom}(\fr_r,\SL_2(\F_q))$.  Then, since all such homomorphisms are $r$-tuples of elements in $\SL_2(\F_q)$ and the action is simultaneous conjugation, $g$ must also preserve $A=\left(
\begin{array}{cc}
 1 & 1 \\
 0 & 1 \\
\end{array}
\right)$, and the transpose of $A$.  Indeed, $gAg^{-1}=\left(
\begin{array}{cc}
 1-a c & a^2 \\
 -c^2 & a c+1 \\
\end{array}
\right)$
and $gA^Tg^{-1}=\left(
\begin{array}{cc}
 b d+1 & -b^2 \\
 d^2 & 1-b d \\
\end{array}
\right)$.

Since the result must remain in $\SL_2(\F_q)$, we conclude that $a,b,c,d$ must be square-roots of elements in $\F_q$.  Moreover, since $ac$ and $bd$ must also be in the base field and $ad-bc=1$, this implies that all of $a,b,c,d$ are in $\F_q$ or all are in $\overline{\F}_q-\F_q$ (by Lemma \ref{residuelemma}).  Now for any two quadratic non-residues of $\F_q$, call them $x$ and $y$, we know that their ratio is a quadratic residue by Lemma \ref{residuelemma}.  Thus, $\sqrt{x}=\lambda \sqrt{y}$ for $\lambda=\frac{\sqrt{x}}{\sqrt{y}}\in \F_q$.  Thus, we conclude that if $g\in \SL_2(\overline{\F}_q)-\SL_2(\F_q)$, then $$g=\left(\begin{array}{cc}\lambda_{11}\sqrt{x}&\lambda_{12}/\sqrt{x}\\ \lambda_{21}\sqrt{x} & \lambda_{22}/\sqrt{x}\end{array}\right)=\left(\begin{array}{cc}\lambda_{11}&\lambda_{12}\\ \lambda_{21}&\lambda_{22}\end{array}\right)\left(\begin{array}{cc}\sqrt{x}&0\\0&1/\sqrt{x}\end{array}\right),$$ where
$\lambda_{ij}\in\F_q$ and $x$ is a quadratic non-residue of $\F_q$.

Consequently, the only additional equivalence that occurs over the algebraic closure, is conjugation by $\left(\begin{array}{cc}\sqrt{x}&0\\0&1/\sqrt{x}\end{array}\right)$ for a single non-residue $x$.  This action is a $\mathbb{Z}_2$ action on $\QS_r(\F_q)$.  Since it is diagonal, it acts trivially on the orbit spaces of diagonal and trivial strata, and for the same reason it acts uniform of order 2 on the quotient of the absolutely irreducible stratum.

Therefore, we have shown 
\begin{prop}\label{orbitstopoints}
The surjective mapping $\QS_r^*(\F_q)\to \X_r(\F_q)$ is injective over the reducible locus, and 2-to-1 over the irreducible locus.
\end{prop}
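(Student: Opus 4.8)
The plan is to describe the fibres of the surjection $\QS_r^*(\F_q)\to\X_r(\F_q)$, which sends the $\SL_2(\F_q)$-orbit of $\rho$ to its (closed) $\SL_2(\overline{\F}_q)$-conjugation orbit, regarded as a Frobenius-fixed point of $\X_r(\overline{\F}_q)$. Since $\SL_2(\overline{\F}_q)$-conjugation preserves the conjugation-invariant properties ``central'', ``reducible with every $\rho(\gamma_i)$ having its eigenvalues in $\F_q$'', ``reducible with some $\rho(\gamma_i)$ having an eigenvalue outside $\F_q$'', and ``absolutely irreducible'' --- which, by the No-Mixing Theorem and Proposition \ref{strataprop}, partition the closed-orbit representations into $\strata_Z$, $\strata_D$, $\strata_{\overline{D}}$ and $\strata_{\AI}$ --- the images of these four strata in $\X_r(\F_q)$ are disjoint, and the ``reducible locus'' of the statement is the image of $\strata_Z\cup\strata_D\cup\strata_{\overline{D}}$ while the ``irreducible locus'' is the image of $\strata_{\AI}$. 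It therefore suffices to fix one stratum and determine, given $\rho_1,\rho_2$ in it with $g\rho_1 g^{-1}=\rho_2$ for some $g\in\SL_2(\overline{\F}_q)$, whether $\rho_1$ and $\rho_2$ must already be $\SL_2(\F_q)$-conjugate and, if not, how many $\SL_2(\F_q)$-classes can arise.

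The engine is a Galois-descent step: applying the $q$-power Frobenius $\sigma$ entrywise to $g\rho_1 g^{-1}=\rho_2$ and using $\sigma(\rho_i)=\rho_i$ yields $\sigma(g)\rho_1\sigma(g)^{-1}=\rho_2=g\rho_1 g^{-1}$, so $g^{-1}\sigma(g)$ lies in the $\SL_2(\overline{\F}_q)$-centralizer $C(\rho_1)$ of $\rho_1$; moreover $C(\rho_1)$ is $\sigma$-stable (since $\rho_1$ is $\F_q$-rational) and hence defined over $\F_q$. The dichotomy of the theorem is exactly the dichotomy ``$C(\rho_1)$ connected versus not''. For $\rho_1\in\strata_Z$ we have $C(\rho_1)=\SL_2(\overline{\F}_q)$; for $\rho_1\in\strata_D\cup\strata_{\overline{D}}$ some component of $\rho_1$ is non-central, hence (being diagonalizable over $\overline{\F}_q$) has distinct eigenvalues, so after simultaneously diagonalizing $\rho_1$ over $\overline{\F}_q$ one sees $C(\rho_1)$ is a maximal torus of $\SL_2(\overline{\F}_q)$ --- connected, one-dimensional, and still defined over $\F_q$. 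In each of these three cases Lang's theorem (for the split torus, Hilbert 90) makes $\tau\mapsto\tau^{-1}\sigma(\tau)$ surjective onto $C(\rho_1)$; choosing $\tau\in C(\rho_1)$ with $\tau^{-1}\sigma(\tau)=g^{-1}\sigma(g)$ makes $g\tau^{-1}$ Frobenius-fixed, hence an element of $\SL_2(\F_q)$, and $(g\tau^{-1})\rho_1(g\tau^{-1})^{-1}=\rho_2$. So $\rho_1$ and $\rho_2$ are $\SL_2(\F_q)$-conjugate and the map is injective over the reducible locus. (Injectivity over $\strata_Z\cup\strata_D\cup\strata_{\overline{D}}$ can also be read off from Propositions \ref{stratasdcount} and \ref{stratasdbarcount}, which present $\strata_D/G$ and $\strata_{\overline{D}}/G$ as tuples of eigenvalue data modulo the Weyl group, and such data inject into $\X_r(\overline{\F}_q)$.)

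When $\rho_1\in\strata_{\AI}$ the centralizer is $C(\rho_1)=\{\pm\id\}$ by Schur's lemma (Proposition \ref{strataprop}), which is disconnected, and this is where the factor $2$ comes from: $g^{-1}\sigma(g)\in\{\pm\id\}$. If $g^{-1}\sigma(g)=\id$ then $g\in\SL_2(\F_q)$ and $[\rho_1]=[\rho_2]$ in $\QS_r^*(\F_q)$. If $g^{-1}\sigma(g)=-\id$ then $\sigma(g)=-g$; since the characteristic is odd this forces $g\notin\SL_2(\F_q)$, and for every $A\in\SL_2(\F_q)$ we get $\sigma(gAg^{-1})=(-g)A(-g)^{-1}=gAg^{-1}$, so $g$ preserves $\hm(\fr_r,\SL_2(\F_q))$. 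The computation carried out just before this Proposition then gives $g=h\,\delta_x$ with $h\in\SL_2(\F_q)$ and $\delta_x=\mm{\sqrt{x}}{0}{0}{1/\sqrt{x}}$ for some quadratic non-residue $x$, so $\rho_2$ is $\SL_2(\F_q)$-conjugate to $\delta_x\rho_1\delta_x^{-1}$. This representation is again $\F_q$-rational --- because $\sigma(\delta_x)=-\delta_x$, as $(\sqrt{x})^q=x^{(q-1)/2}\sqrt{x}=-\sqrt{x}$, the exponent being $-1$ since the squares form an index-$2$ subgroup of the cyclic group $\F_q^*$, as in the proof of Lemma \ref{residuelemma} --- and still absolutely irreducible; it is \emph{not} $\SL_2(\F_q)$-conjugate to $\rho_1$, for $h'\delta_x\rho_1\delta_x^{-1}(h')^{-1}=\rho_1$ with $h'\in\SL_2(\F_q)$ would put $h'\delta_x\in C(\rho_1)=\{\pm\id\}$, forcing $\sqrt{x}\in\F_q$; and its $\SL_2(\F_q)$-class is independent of the choice of $x$, since for another non-residue $y$ the ratio $x/y$ is a square (Lemma \ref{residuelemma}), whence $\delta_x\delta_y^{-1}\in\SL_2(\F_q)$. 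Consequently the fibre of $\QS_r^*(\F_q)\to\X_r(\F_q)$ over any irreducible point is exactly $\{[\rho_1],[\delta_x\rho_1\delta_x^{-1}]\}$, a two-element set, so the map is $2$-to-$1$ over the irreducible locus.

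I expect the only real obstacle to be the reducible case for $\strata_{\overline{D}}$: there $\rho_1$ cannot be diagonalized while staying defined over $\F_q$, so the assertion that $C(\rho_1)$ is a connected one-dimensional $\F_q$-torus (a non-split one) must be argued on general grounds before Lang's theorem can be applied. The remaining points --- that $\sigma(g)=-g$ forces $g=h\delta_x$, that $\delta_x$ preserves $\F_q$-rationality, and that the induced involution on $\strata_{\AI}/\SL_2(\F_q)$ is fixed-point-free --- are short, and the first of them is essentially the computation displayed before the statement.
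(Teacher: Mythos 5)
Your proof is correct, and it reaches the proposition by a genuinely different route from the paper's. The paper argues globally: it asks which $g\in\SL_2(\overline{\F}_q)$ can carry $\hm(\fr_r,\SL_2(\F_q))$ into itself under conjugation, the computation with $\smm{1}{1}{0}{1}$ and its transpose forces any such $g$ outside $\SL_2(\F_q)$ to be an $\F_q$-matrix times $\delta_x=\mathrm{diag}(\sqrt{x},1/\sqrt{x})$ for a quadratic non-residue $x$, and the proposition is then read off the resulting $\Z_2$-action on $\QS_r^*(\F_q)$: trivial on the central and diagonalizable strata because the extra element is diagonal, of order $2$ on $\strata_{\AI}/G$. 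You instead work fibrewise by Galois descent: the Frobenius cocycle $g^{-1}\sigma(g)$ lies in the centralizer $C(\rho_1)$, and the dichotomy of the statement becomes connected versus disconnected centralizer --- Lang's theorem (Hilbert 90 in the split case) trivializes the cocycle when $C(\rho_1)$ is $\SL_2$ or a maximal torus, giving injectivity over the whole reducible locus, while $C(\rho_1)=\{\pm\id\}$ in the absolutely irreducible case leaves exactly one extra class, that of $\delta_x\rho_1\delta_x^{-1}$, which is $\F_q$-rational, has closed orbit, and is not $\SL_2(\F_q)$-conjugate to $\rho_1$ again by the centralizer. This buys two things: you never need to assume that the conjugating element preserves all of $\hm(\fr_r,\SL_2(\F_q))$ --- in your argument that is a consequence of $\sigma(g)=-g$, which only occurs in the irreducible case --- and you treat $\strata_{\Dbar}$ on the same footing as $\strata_Z$ and $\strata_D$, whereas the paper's ``the extra element is diagonal, hence acts trivially'' is immediate only for the latter two; you also make the freeness of the involution on the irreducible locus explicit rather than asserted. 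The one point you flag as an obstacle is not a real gap: a non-central diagonalizable element of $\SL_2$ has distinct eigenvalues, so $C(\rho_1)$ is a maximal torus, it is defined over $\F_q$ because it is cut out by $\F_q$-linear equations (and $\F_q$ is perfect), and Lang's theorem requires connectedness, not splitness. The only ingredient you share with the paper is the normal form $g=h\delta_x$ when $\sigma(g)=-g$, and even that can be obtained directly: each entry of $g$ then squares into $\F_q$, and ratios of non-residues are residues by Lemma \ref{residuelemma}.
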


\section{Proof of Theorem \ref{epolyniomial}}\label{sectionproof}
Recall that Theorem \ref{epolyniomial} says that the $E$-polynomial for $\X_r(\SL_2(\C))$ is 
$$E_r(q)=(q-1)^{r-1}
   \left((q+1)^{r-1}-1\right)
   q^{r-1}+\frac{1}{2} q
   \left((q-1)^{r-1}+(q+1)^{r-1}
   \right)$$

\begin{proof}[Proof of Theorem \ref{epolyniomial}]
There is a bijection between the Zariski closed orbits (over the algebraic closure of $\F_q$) in $\hm(\fr_r,\SL_2(\F_q))$ and the $\F_q$-points in the GIT quotient $\X_r(\overline{\F}_q)$.  Let $\QS^*_r(\F_q)$ be the orbit space of orbits that consist of points whose $\SL_2(\overline{\F}_q)$-orbits are Zariski closed.  Thus, $\QS^*_r(\F_q)$ maps onto the $\F_q$-points of $\X_r(\overline{\F}_q)$.  With respect to Definition \ref{stratadef}, the points that have closed orbits are the trivial, diagonal, extendably diagonal and absolutely irreducible representations, since upper-triangular representations that are not diagonalizable do not have closed orbits. 
By Remark \ref{trivialstrata} and the fact that $G$ acts trivially on $\strata_Z$, $|\strata_Z|=|\strata_Z/G|=2^r$.  By Proposition \ref{stratasdcount}, $|\strata_D/G|=\frac{(q-1)^r-2^r}{2}$ and $|\strata_D|=\frac{(q-1)^r-2^r}{2}q(q+1)$.  By Proposition \ref{stratasdbarcount}, $|\strata_\Dbar/G|=\frac{(q+1)^r-2^r}{2}$ and $|\strata_\Dbar|=\frac{(q+1)^r-2^r}{2}q(q-1)$.  By Proposition \ref{strataucount}, $|\strata_U/G|=\frac{(2q)^r-2^r}{\frac{q-1}{2}}$ and $|\strata_U|=(q+1)\left((2q)^r-2^r\right)$.  And by Proposition \ref{stratanrcount}, $|\sn/G|=\frac{2}{q(q-1)}(q^r-q)((q-1)^r-2^r)$ and $|\sn|=(q+1)(q^r-q)((q-1)^r-2^r).$  Therefore, using Corollary \ref{sec:corollary13}, we obtain an explicit formula $$|\strata_{\AI}|=|\hm(\fr_r,\SL_2(\F_q))|-|\strata_Z|-|\strata_D|-|\strata_\Dbar|-|\strata_U|-|\sn|,$$ and consequently for $|\strata_{\AI}/G|$ since $G$ acts uniformly of 
order 2 on $\strata_{\AI}$.  By Proposition \ref{orbitstopoints}, the number of $\F_q$-points in the smooth locus of $\X_r$ is $|\strata_{\AI}/G|/2$.  The resulting formula, counting $\F_q$-points in $\X_r(\overline{\F}_q)$, is $|\strata_Z/G|+|\strata_D/G|+|\strata_\Dbar/G|+|\strata_{\AI}/G|/2=$ $$(q-1)^{r-1}
   \left((q+1)^{r-1}-1\right)
   q^{r-1}+\frac{1}{2} q
   \left((q-1)^{r-1}+(q+1)^{r-1}
   \right),$$ for $q=p^k$ where $p$ is an odd prime.  Thus, since $\X_r$ admits an appropriate spreading out by Seshadri, we conclude that $\X_r$ is polynomial-count, and so by Katz's theorem the counting polynomial is the $E$-polynomial, as claimed.
Likewise we conclude that the $E$-polynomial for $\X_{\Z^r}(\SL_2(\C))=\X_r^{sing}$ is $|\strata_Z/G|+|\strata_D/G|+|\strata_\Dbar/G|=\frac{1}{2}((q-1)^r+(q+1)^r).$
\end{proof}

\section{Final Remarks}\label{finalsection}
As determined in \cite{FlLa}, the Poincar\'e Polynomial for $\X_r$ is 
$$P_{\fr^r}(t)=-\frac{t\left(t^3+1\right)^r}{1-t^4}+\frac{1}{2} t^3\left(\frac{(t+1)^r}{1-t^2}-\frac{(1-t)^r}{t^2+1}\right)+t+1.$$  Evaluating at $t=-1$ gives the Euler characteristic $\chi(\X_r)=2^{r-2}$ (for $r\geq 2$).  In \cite{FlLa2}, it is shown that $\X_r^{sing}=\X_{\Z^r}(\SL_2(\C))$, and in \cite{FlLa4} it is shown that $\X_{\Z^r}(\SL_2(\C))$ is homotopic to $(S^1)^{\times r}/\Z_2$.  However, the cohomology of the latter is generated by $\Z_2$-invariant cocycles, and so is trivial in odd dimensions, and is $\Z^{\binom{r}{2k}}$ in dimension $2k$.  Thus, the Poincar\'e polynomial for $\X^{sing}=\X_{\Z^r}(\SL_2(\C))$ is $P_{\Z^r}(t)=\sum_{k=0}^{\lfloor n/2\rfloor}\binom{n}{2k}t^{2k}$.  Evaluating at $t=-1$, we see that $\chi(\X^{sing})=2^{r-1}$.  Note that this formula simplifies to $\frac{1}{2}\left((1-t)^r+(t+1)^r\right)$, which implies the equations $q^rE_{\Z^r}(1/q)=P_{\Z^r}(q)$ and $q^rP_{\Z^r}(1/q)=E_{\Z^r}(q)$.  The above paragraph gives an alternative proof of Corollary \ref{eulercor} by the 
inclusion-exclusion principle since $\X_r$ and its strata are complex algebraic sets.  Given that we have both the $E$-polynomial and the Poincar\'e polynomial for $\X_r$, it would interesting to try to compute the full mixed Hodge polynomial which encodes them both.

It would also be interesting to work out the $\SL_3(\C)$ case, using Diophantine geometry as is done in this paper, since in this case the characteristic polynomial is a cubic and so elliptic curves are likely to be in play.

\begin{remark}
After this paper appeared, the $E$-polynomial of the $\SL_3(\C)$-character variety of a free group was determined using fibration techniques in \cite{LaMu}, and can be likewise deduced from results in \cite{MoRe} which use arithmetic and combinatorial techniques.
\end{remark}

Lastly, given Remark \ref{splitremark}, and that $\X_r(\SL_n(\C))$ always admits a spreading out over $\Z[1/n]$, we expect the following conjecture to hold.
\begin{conjecture}
$\X_r(G)$ is polynomial-count for any split reductive algebraic $\C$-group $G$.
\end{conjecture}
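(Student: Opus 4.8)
The plan is to run, over $G(\F_q)$, the same stratify-and-count machine used above for $\SL_2$, with the structure theory of finite groups of Lie type supplying the individual polynomial counts. First I would invoke Seshadri's extension of GIT to an arbitrary base, exactly as in Section~2, to produce a spreading out $\mathcal{X}_r$ of $\X_r(G)$ over $\Z[1/N]$ for a suitable integer $N$ depending on $G$; it then suffices to prove that $q\mapsto\#\mathcal{X}_r(\F_q)$ agrees with a fixed element of $\Z[q]$ for all $q$ prime to $N$. By the Bruhat-decomposition computation recalled in Remark~\ref{splitremark}, $\#\hm(\fr_r,G)(\F_q)=\#G(\F_q)^{\times r}$ is already a polynomial in $q$, so the entire difficulty is in passing to the GIT quotient, since the conjugation action is not free.

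Second, I would identify $\X_r(\F_q)$ with the set of Frobenius-stable Zariski-closed $G(\overline{\F}_q)$-orbits in $\hm(\fr_r,G)(\overline{\F}_q)$ --- i.e.\ with polystable representations --- and relate it to the orbit set $\QS^*_r(\F_q)$ of closed $G(\F_q)$-orbits. By Lang's theorem every Frobenius-stable closed orbit, being a homogeneous space under the connected group $G$, carries an $\F_q$-point, so $\QS^*_r(\F_q)$ surjects onto $\X_r(\F_q)$; this is the general analogue of Proposition~\ref{orbitstopoints}. The fibre over a closed orbit $O$ is the set of $G(\F_q)$-orbits inside $O^{F}$, which is controlled by the first Galois cohomology of the component group of the stabilizer; since the stabilizer of a polystable representation is reductive, this component group ranges over a finite list (each with finitely many $\F_q$-forms) determined by $G$ alone, so these fibre cardinalities take only finitely many values.

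Third, I would stratify $\hm(\fr_r,G)(\F_q)=G(\F_q)^{\times r}$ by the $G(\F_q)$-conjugacy type of the stabilizer --- equivalently, by the conjugacy class of the Levi subgroup $L\subseteq G$ through which a polystable representation is ``$L$-irreducible''. On each stratum $G(\F_q)$ acts uniformly, so the Uniform Action Theorem (Proposition~\ref{uniformactionthm}) gives the orbit count of that stratum as its cardinality times the stabilizer order divided by $\#G(\F_q)$. One then needs: (i) the order of the centralizer in $G(\F_q)$ of an $L$-irreducible tuple is a polynomial in $q$, which follows from the standard order formulas for finite reductive groups; (ii) the number of $\F_q$-rational Levis in each conjugacy type is a polynomial in $q$, being a ratio $\#G(\F_q)/\#N_G(L)(\F_q)$ of such polynomials corrected by a cohomological term; and (iii) the number of $L$-irreducible $r$-tuples in $L(\F_q)^{\times r}$ is a polynomial in $q$, obtained by descending induction on the semisimple rank via inclusion--exclusion over the proper-Levi strata of $L$ --- exactly as $\#\strata_{\AI}$ was obtained in Section~\ref{sectionproof} by subtracting all reducible strata from $\#G(\F_q)^{\times r}$.

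The main obstacle is the interaction of step (iii) with the Galois bookkeeping of step two: one must verify that, after summing the contributions of all $\F_q$-forms of each Levi, of all possible component groups of stabilizers, and of the corresponding cohomological twists, the result is a genuine polynomial in $q$ rather than merely a quasi-polynomial depending on $q\bmod N$. For $\SL_2$ this uniformity is precisely the content of the Legendre-symbol manipulations in Sections~5--7 (e.g.\ that summing over the $(q-1)/2$ non-residue traces, or over the two classes of $\F_q^*/(\F_q^*)^2$, collapses to a polynomial); for general $G$ the analogous inputs are that the relevant dependencies are governed by $\F_q^*/(\F_q^*)^n$ and by the splitting behaviour of a maximal torus, and one must check that these congruence dependencies cancel in the total sum. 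Establishing this uniform polynomiality --- rather than the individual polynomial counts, which are supplied by known structure theory --- is where the genuine content of the conjecture lies; the rest is an elaboration of the $\SL_2$ bookkeeping carried out in the body of the paper.
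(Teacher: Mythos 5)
What you have written is a research programme, not a proof, and the statement you are addressing is one the paper itself does not prove: it appears only as a Conjecture, motivated by Remark~\ref{splitremark} (polynomial count of $\hm(\fr_r,G)$ via Bruhat decomposition) and the existence of a spreading out over $\Z[1/N]$. So there is no argument in the paper to match yours against, and your proposal must stand on its own; it does not. Your own final paragraph concedes the decisive point: after summing over $\F_q$-forms of Levi subgroups, over component groups of polystable stabilizers, and over the attendant Galois-cohomological twists, one only knows a priori that $\#\X_r(G)(\F_q)$ is a quasi-polynomial in $q$, with coefficients that may depend on $q$ modulo some integer (equivalently, on which roots of unity lie in $\F_q$, on the splitting of tori, on $\F_q^*/(\F_q^*)^n$, etc.). The conjecture asserts precisely that these congruence dependencies cancel, and your outline supplies no mechanism forcing the cancellation; in the $\SL_2$ case treated in the paper this cancellation is not automatic but is verified by the explicit residue/non-residue computations of Lemma~\ref{residuelemma}, Lemma~\ref{sdbarstablemma}, Proposition~\ref{stratasdbarcount} and Proposition~\ref{orbitstopoints}, where, for instance, the $(q-1)/2$ non-split trace values and the two square classes of $\F_q^*$ happen to recombine into polynomials. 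Deferring exactly this step means the ``proof'' establishes nothing beyond what Remark~\ref{splitremark} already suggests.

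Two further points in your sketch would also need genuine work rather than assertion. First, the identification of $\X_r(G)(\F_q)$ with Frobenius-stable closed $G(\overline{\F}_q)$-orbits, and the description of the fibres of $\QS^*_r(\F_q)\to\X_r(\F_q)$ by Galois cohomology of stabilizer component groups, requires that formation of the invariant ring commute with reduction mod $p$ and that the closed-orbit/polystable correspondence behave well over $\F_q$; the paper handles the analogous issue for $\SL_2$ by hand (the explicit $2$-to-$1$ statement of Proposition~\ref{orbitstopoints}), and in general the sizes of these fibres are again quasi-polynomial data that feed into the cancellation problem above. Second, your step (iii) --- counting $L$-irreducible tuples by inclusion--exclusion over proper Levis --- is essentially the conjecture for the groups $L$ together with the form-counting of step (ii), so the induction must be set up so as not to assume the polynomiality it is trying to prove. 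None of this makes the strategy unreasonable --- it is the natural generalization of the paper's stratify-and-count method, and related arithmetic approaches (e.g.\ the techniques cited in the paper's final remarks) do succeed for free groups --- but as written the proposal leaves the conjecture exactly as open as the paper does.
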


\section*{Acknowledgments}
The authors thank Jordan Ellenberg, Nicholas Katz, Ben McReynolds, Juan Souto, and David Speyer for helpful conversations.  We also thank Eugene Xia for hosting the second named author in Taiwan while some of this work was completed.  Lastly, we thank the referee whose comments helped make this paper more readable.


\def\cdprime{$''$} \def\cdprime{$''$} \def\cprime{$'$} \def\cprime{$'$}
  \def\cprime{$'$} \def\cprime{$'$}

\end{document}